\newtheorem{thm}{Theorem}[section]
\newtheorem*{thm*}{Theorem}
\newtheorem{lem}[thm]{Lemma}
\newtheorem{prop}[thm]{Proposition}
\newtheorem*{prop*}{Proposition}
\newtheorem{cor}[thm]{Corollary}
\theoremstyle{definition}
\newtheorem{defn}[thm]{Definition}
\newtheorem{notation}[thm]{Notation}
\newtheorem{remark}[thm]{Remark}
\newtheorem{question}[thm]{Question}
\newtheorem{problem}[thm]{Problem}
\newtheorem{example}[thm]{Example}
\newtheorem{exercise}[thm]{Exercise}
\newcommand{\dminus}{ 
\buildrel\textstyle\ .\over{\hbox{ 
\vrule height3pt depth0pt width0pt}{\smash-} 
}}
\def\e{\epsilon}
\def\la{\lambda}
\def\bb{\mathbb}
\def\vp{\varphi}
\def\de{\delta}
\def\Sg{\Sigma}
\def\bb{\mathbb}
\def\cc{\mathcal}
\DeclareMathOperator{\re}{Re}
\DeclareMathOperator{\im}{Im}
\DeclareMathOperator{\id}{id}
\DeclareMathOperator{\ad}{ad}
\DeclareMathOperator{\cp}{{CP}}
\DeclareMathOperator{\cb}{cb}
\DeclareMathOperator{\tr}{tr}
\DeclareMathOperator{\spn}{span}
\DeclareMathOperator{\ucp}{UCP}
\DeclareMathOperator{\MIN}{MIN}
\DeclareMathOperator{\MAX}{MAX}
\DeclareMathOperator{\dist}{dist}
\DeclareMathOperator{\domain}{domain}
\DeclareMathOperator{\codomain}{codomain}
\DeclareMathOperator{\Theory}{Theory}
\DeclareMathOperator{\Model}{Model}
\newcommand\ip[2]{\left\langle #1\, , #2 \right\rangle}
\DeclareMathOperator{\CP}{{CP}}
\begin{document}

%%%%%%%%%%%%%%%%%%%%%%%%%%%%%%%%%%%%%%%%%%%%%%
\title{Model Theory of Operator Systems and C$^*$-Algebras}

\author[Sinclair]{Thomas Sinclair}
%\thanks{T. Sinclair was partially supported by NSF grant DMS-1600857.}

\address{Mathematics Department, Purdue University, 150 N. University Street, West Lafayette, IN 47907-2067}
\email{tsincla@purdue.edu}
\urladdr{http://www.math.purdue.edu/~tsincla/}

\subjclass[2020]{}

\keywords{}

%%%%%%%%%%%%%%%%%%%%%%%%%%%%%%%%%%%%%%%%%%%%%%
\setcounter{tocdepth}{1}

\maketitle

\begin{abstract}
    We survey the model theory of operator systems and C$^*$-algebras.
\end{abstract}

\section{Introduction}

At the heart of the theory of operator algebras is the ``noncommutative order'' imposed on the algebra $\cc B(H)$ of bounded linear operators on some Hilbert space $H$ by the cone of positive semidefinite operators, that is, operators $T$ which admit a sum-of-squares decomposition $T = S_1^*S_1 + \dotsc S_n^*S_n$ for some $n$ and $S_1,\dotsc,S_n\in \cc B(H)$ (equivalently, $n=1$). This partial order structure, even in the finite-dimensional case, is incredibly rich and complex and its understanding is deeply connected with many important and outstanding open problems in fields as diverse as quantum information theory and quantum computing, numerical linear algebra and optimization, combinatorics, computer science, and random matrix theory. Beginning in the 1960s and 70s with work of Arveson, Choi, Effros, Lance, Kirchberg, and Stinespring it became apparent that the full power of the noncommutative order is captured not just by the ``level one'' order structure but the higher-level order structure imposed by taking matrix amplifications. In fact, the higher-order structure on a C$^*$-algebra is powerful enough to capture the norm-structure as well as many properties which would be considered algebraic in nature. This conception of noncommutative order crystallized in the notion of an \emph{operator system} first systematically studied in the groundbreaking work of Choi and Effros \cite{Choi1977}.

In any semisimple category of algebras the finite-dimensional, simple objects are of primary importance. In the case of C$^*$-algebras, these are the (complex) matrix algebras. Thus, it was a natural theme from the very beginning of the subject to try to understand the structure of a C$^*$-algebra by quantifying how well (or how poorly) it was algebraically approximated by direct sums of matrix algebras. With the theory of operator systems, it was realized that finite-dimensional \emph{order} approximation through matrix algebras was an equally vital aspect of the theory, and such properties as nuclearity and exactness have become standard tools of the trade for working operator algebraists. The surprising relations discovered by celebrated work of Kirchberg between two other well-studied finite-dimensional order approximation properties, the (local) lifting and weak expectation properties, and a famous conjecture of Connes have helped catalyze an entirely new chapter in operator algebras through their connection with a deep problem of Tsirelson arising in quantum information theory: see Goldbring's article in this volume.

The goal of this set of notes is to introduce the reader to some aspects of the continuous model theory of operator systems and C$^*$-algebras mainly through the model theoretic properties of the noncommutative order and their relation with finite-dimensional approximation properties.

This article is organized as follows.

\begin{itemize}
    \item Section 2 gives a brief overview of the theory of operator systems. While not totally self-contained, the section aims at presenting enough material for the reader to gain a basic working familiarity with operator systems, including such topics at the Choi--Effros representation theorem, Arveson's extension theorem, duality and the double dual, and quotients. The latter part of the section provides several examples of canonical ways of constructing operator systems. The reader may wish to consult the excellent books of Brown and Ozawa \cite{BrownOzawa} and Paulsen \cite{paulsen2002completely} for a fuller account of the theory. The reader may also wish to read the article of Szabo in this volume for an introduction to C$^*$-algebras before proceeding beyond this section.
    
    \item Section 3 introduces the model theory of operator systems and C$^*$-algebras. The first part of the section constructs the language for operator systems, while the next few sections serve to introduce basic model-theoretic concepts such as theories, ultraproducts, and definability, illustrating these ideas in the operator systems context before concluding with a discussion of the model theory of C$^*$-algebras. This section is largely based on the accounts found in the paper \cite{gs-kirchberg} of Goldbring and the author and the monograph of Farah, et al.\ \cite{model-c-star}, though many of the proofs of results given here are new and the centering of the account from the perspective of matrix completion problems is also somewhat novel. A reader wishing to explore these topics further would do good to consult \cite{model-metric} and \cite{model-c-star}.
    
    \item Section 4 surveys some applications of the model theory developed in the Section 3 to the theory of finite-dimensional approximation properties for operator systems and C$^*$-algebras. The discussion of exactness and nuclearity is largely sourced from \cite{model-c-star} and \cite{gs-omitting}, while that of the lifting property is from \cite{gs-kirchberg}, \cite{gs-omitting}, and \cite{sinclair-cp}. The last part of this section on the weak expectation property is sourced from \cite{gs-omitting} and \cite{lupini-wep2018}, though the treatment here is substantially new. We refer the reader to Goldbring's article in this volume for the model-theoretic aspects of the local lifting and weak expectation properties in relation to the famous conjectures of Connes and Kirchberg, and the reader may also wish to consult the recent monograph of Pisier \cite{pisier-ck} on  this subject.
\end{itemize}

\section{General Background on Operator Systems} 

\begin{defn}\label{defn:os-1}
    An \emph{operator system} $E$ is a closed subspace of $\cc B(H)$ which is closed under taking adjoints and contains the unit. 
\end{defn}

\begin{remark} \label{rmk:pos-cone}
    Let $E^h$ be the set of hermitian elements of $E$. From the property that $E$ is closed under adjoints, it is immediate that $E^h$ is a real, unital subspace with $E = \bb C E^h$. Since $E$ contains the unit, we have that for each $x\in E^h$ we may decompose $x$ as a difference of positive elements in $E$, namely as $x = (\|x\|1 + x) -  \|x\|1$, which shows that $E^+ := E\cap \cc B(H)^+$ is a cone in $E$.
    In this way $E^h$ is an ordered (real) vector space under $x\preceq y \longleftrightarrow y-x\in E^+$. Moreover $1$ is an \emph{order unit}, that is, for each $x\in E^h$ $-r1\preceq x \preceq r1$ for some $r>0$ which is \emph{archimedean} in the sense that $-\e1\preceq x\preceq \e1$ for all $\e>0$ implies that $x=0$.
\end{remark}

So far we have just established that an operator system is a Banach space with a pleasant real, ordered structure. In the real case, these are referred to as \emph{function systems} and were studied and classified by Kadison \cite{kadison} (see \cite[Chapter II]{alfsen} as well), while a study of the complex case was undertaken by Paulsen and Tomforde \cite{paulsen-tomforde}. What distinguishes operator systems from function systems is the following enrichment of structure of the objects or, to put it in a glass-half-empty way, restriction of the category maps between the objects.

The enrichment comes from the following, seemingly modest, observation: if $E\subset \cc B(H)$ is an operator system, then so is $E_n := M_n(E)\subset M_n(\cc B(H))\cong \cc B(H^{\oplus n})$, where $H^{\oplus n}$ is the direct sum of $n$ copies of $H$. Thus, from each operator system there may be derived a sequence of order unit spaces $(E_n, E_n^+, 1_n)$. Here, and throughout, $1_n$ is the tensor of the identity matrix $I_n$ in $M_n$ with the unit $1$ in $E$. Moreover, there is a natural family of connecting maps defined as follows. For $a\in M_{n,k}$ define $\ad(a): E_n \to E_k$ by $\ad(a): x\mapsto a^*xa$. Notice that $\ad(a)$ preserves the order structure. For $v\in M_{n,k}$ with $v^*v=1_k$ we will refer to $\ad(v)$ as the \emph{compression} induced by $v$. (Notice that $vv^*$ is a rank $k$ projection in $M_n$, so $n\geq k$.)

Given a $\ast$-linear map $\vp: E\to F$ between operator systems $E$ and $F$, we can define a $\ast$-linear map $\vp_n: E_n\to F_n$ coordinate-wise by $\vp_n([x_{ij}]) := [\vp(x_{ij})]$. Thinking about $M_n(E)\cong M_n\otimes E$, $\vp_n$ is nothing other than $\id_{M_n}\otimes\, \vp$. Note that the $\vp_n$'s commute with the connecting maps (thus, compressions): $\ad(a)\circ \vp_n = \vp_k\circ \ad(a)$.

\begin{defn}
    A map $\vp: E\to F$ is said to be \emph{$n$-positive} ($n=1,2,3,\dotsc$) if $\vp_n(E_n^+)\subset F_n^+$ and \emph{completely positive} if $\vp$ is $n$-positive for all $n$. We say that $\vp$ is a \emph{complete order embedding} if it is unital and both $\vp$ and $\vp^{-1}: \vp(E)\to E$ are completely positive.
\end{defn}

\noindent It is easy to see that $n$-positivity implies $k$-positivity for all $k\leq n$ and that positive maps are $\ast$-linear.

For an operator system $E\subset \cc B(H)$, we denote by $\|\cdot\|_n$ the restriction of the operator norm on $\cc B(H^{\oplus n})$ to $E_n$.

\begin{defn}
    A linear map $\vp: E\to F$ is said to be \emph{$n$-bounded} if $\vp_n: (E_n,\|\cdot\|_n)\to (F_n,\|\cdot\|_n)$ is bounded, in which case we denote the norm of $\vp_n$ by $\|\vp\|_n$. The map $\vp$ is said to be \emph{completely bounded} if $\sup_n\|\vp\|_n<\infty$, in which case we write $\|\vp\|_{\cb} := \sup_n \|\vp\|_n$. (Note that $\|\vp\|_k\leq \|\vp\|_n$ for all $k\leq n$.)
\end{defn}

Perhaps the most important foundational fact in the the theory of operator systems is that the higher-order norm structure is totally determined by the structure of the positive cones.

\begin{prop} \label{prop:order-norm}
     Let $E\subset \cc B(H)$ be an operator system. For $x\in E_n$ we have that
     \[\|x\|_n = \inf\left\{t>0 : \begin{bmatrix} t1 & x\\ x^* & t1\end{bmatrix}\in E_{2n}^+\right\}.\]
\end{prop}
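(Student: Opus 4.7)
The plan is to reduce the equality to a classical fact about self-adjoint $2\times 2$ block matrices, namely that for $y$ acting on a Hilbert space $K$ and $t>0$, one has $\begin{bmatrix} tI & y\\ y^* & tI\end{bmatrix}\geq 0$ on $K\oplus K$ precisely when $\|y\|\leq t$. First I would note that the block matrix $M(t) := \begin{bmatrix} t1 & x\\ x^* & t1\end{bmatrix}$ automatically lies in $E_{2n}$ for any $x\in E_n$ and $t\in\R$, since $E$ is a unital self-adjoint subspace of $\cc B(H)$. Thus $M(t)\in E_{2n}^+$ is equivalent to $M(t)\geq 0$ as an operator on $H^{\oplus 2n}$, and the entire problem reduces to a statement inside $\cc B(H^{\oplus 2n})$.

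Next I would write $M(t) = t\cdot 1_{2n} + T$, where $T := \begin{bmatrix} 0 & x\\ x^* & 0\end{bmatrix}$ is self-adjoint. The key observations are (i) $\|T\| = \|x\|_n$ and (ii) the spectrum $\sigma(T)$ is symmetric about the origin, from which $-\min\sigma(T) = \max\sigma(T) = \|x\|_n$. Point (i) follows from the computation $T^2 = \begin{bmatrix} xx^* & 0\\ 0 & x^*x\end{bmatrix}$, giving $\|T\|^2 = \|T^2\| = \max(\|xx^*\|,\|x^*x\|) = \|x\|_n^2$. Point (ii) follows from the conjugation identity $UTU^* = -T$ for the self-adjoint unitary $U = \begin{bmatrix} 1_n & 0\\ 0 & -1_n\end{bmatrix}$, showing $T$ and $-T$ are unitarily equivalent and hence cospectral.

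Putting these together, $M(t) \geq 0$ if and only if $t + \lambda \geq 0$ for every $\lambda\in\sigma(T)$, i.e., $t \geq -\min\sigma(T) = \|x\|_n$. Taking the infimum over admissible $t$ yields the claimed identity (with the infimum in fact attained at $t = \|x\|_n$). The main obstacle, mild as it is, lies in verifying the norm identity and spectral symmetry of $T$; everything else is a matter of unwinding definitions and exploiting the fact that positivity inside $E_{2n}$ is inherited from $\cc B(H^{\oplus 2n})^+$.
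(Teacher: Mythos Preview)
Your proof is correct and takes a genuinely different route from the paper's. The paper argues directly at the level of vectors: it writes out the quadratic form $\langle M(t)(\xi\oplus\eta),\xi\oplus\eta\rangle$, reduces positivity of $M(t)$ to the inequality $t\geq |\langle x\xi,\eta\rangle|$ for all unit vectors $\xi,\eta$ (after a phase substitution $\eta\mapsto e^{is}\eta$), and then invokes the formula $\|x\| = \sup_{\|\xi\|=\|\eta\|=1}|\langle x\xi,\eta\rangle|$. Your argument instead works purely operator-theoretically: you decompose $M(t) = t\cdot 1_{2n} + T$, compute $\|T\|$ via $T^2$ and the $C^*$-identity, and exploit the unitary symmetry $UTU^* = -T$ to pin down $\min\sigma(T)$. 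Your approach is cleaner in that it avoids the phase trick and the sesquilinear sup formula, and it makes the role of the self-adjointness of $T$ transparent; the paper's approach, on the other hand, is slightly more elementary in that it never invokes the spectral characterization of positivity or the identity $\|T\|^2 = \|T^2\|$ for self-adjoint $T$, relying only on the definition of positivity via inner products. Both reductions to $\cc B(H^{\oplus 2n})$ are identical and rest on the definition $E_{2n}^+ = E_{2n}\cap \cc B(H^{\oplus 2n})^+$.
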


\begin{proof}
    It clearly suffices to check the case $n=1$. We have that $\begin{bmatrix} a & b\\ b^* & c\end{bmatrix}\in M_2(\cc B(H))^+$ if and only if \[\ip{a\xi}{\xi} + 2\re\ip{b\xi}{\eta} + \ip{c\eta}{\eta}\geq 0\] for all $\xi,\eta\in H$ with $\|\xi\|^2+\|\eta\|^2=1$. It follows that $\begin{bmatrix} t1 & x\\ x^* & t1\end{bmatrix}$ is positive if and only if $2t + 2\re\ip{x\xi}{\eta}\geq 0$ for all unit-norm vectors $\xi,\eta\in H$. This, in turn, is seen to be equivalent to $t\geq |\ip{x\xi}{\eta}|$ for all such $\xi,\eta$ by making substitutions of the form $\eta\mapsto e^{is}\eta$ for $s\in \bb R$ chosen suitably. Since we have that \[\|x\| = \sup\{|\ip{x\xi}{\eta}| : \|\xi\|=\|\eta\|=1\},\] this completes the proof. \qedhere
\end{proof}

\begin{exercise} \label{ex:unital-2-positive-contraction}
    A unital, $2$-positive map $\vp: E\to F$ is contractive.
\end{exercise}

\begin{prop} \label{prop:contraction-positive}
    If $\vp: E\to F$ is a unital, $\ast$-linear contractive map between operator systems, then $\vp$ is positive.
\end{prop}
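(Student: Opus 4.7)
The plan is a direct numerical-range argument driven by unitality and contractivity. Take $x \in E^+$; by scaling we may assume $\|x\| \leq 1$. Since $E \subset \cc B(H)$, the element $1-x$ lies in $\cc B(H)^+$ and is bounded above by $1$, hence $\|1-x\| \leq 1$. Unitality and contractivity of $\vp$ then give
\[\|1-\vp(x)\| = \|\vp(1)-\vp(x)\| = \|\vp(1-x)\| \leq \|1-x\| \leq 1.\]

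The rest of the proof reduces to the following general fact: if $y \in \cc B(K)$ is hermitian and $\|1-y\| \leq 1$, then $y \succeq 0$. I would prove this using the standard identification of the operator norm of a hermitian operator with its numerical radius: $|\langle(1-y)\xi,\xi\rangle| \leq 1$ for every unit vector $\xi$, which yields $\langle y\xi,\xi\rangle \in [0,2]$, and since $y^*=y$ this implies $y \succeq 0$. Applying this in the ambient $\cc B(K)$ with $F \subset \cc B(K)$: because $\vp$ is $\ast$-linear, $\vp(x)$ is hermitian, and the bound $\|1-\vp(x)\| \leq 1$ established above forces $\vp(x) \in \cc B(K)^+ \cap F = F^+$, as required.

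There is no real obstacle here; the proposition is essentially a definition-chase combined with the hermitian-norm-equals-numerical-radius identity from basic operator theory. It is worth noting that the argument only uses ``level one'' structure: no matrix amplification or appeal to Proposition \ref{prop:order-norm} is needed. This is consistent with the fact recorded in Exercise \ref{ex:unital-2-positive-contraction} that the converse direction (contractivity from positivity) requires $2$-positivity rather than mere positivity.
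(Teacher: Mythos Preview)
Your proof is correct and is essentially the same as the paper's: both scale to $0\preceq x\preceq 1$, translate to a hermitian contraction, push through $\vp$ using unitality and contractivity, and then invoke the equivalence $\|h\|\leq 1 \Leftrightarrow -1\preceq h\preceq 1$ for hermitian $h$. The only cosmetic difference is that the paper translates via $y=2x-1$ while you translate via $1-x$; the underlying idea is identical.
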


\begin{proof}
    Let $x\in E^+$ with $0\preceq x\preceq 1$, and define $y = 2x -1$ so that $-1\preceq y\preceq 1$, equivalently $\|y\|\leq 1$. Since $\vp$ is unital and $\ast$-linear $\vp(y) = 2\vp(x) - 1$ is hermitian, and $\|\vp(y)\|\leq 1$ since $\vp$ is contractive. Thus, $-1\preceq 2\vp(x) -1\preceq 1$, which implies $\vp(x)\succeq 0$.
\end{proof}

\begin{prop} \label{ex:contraction-positive}
    Any unital, contractive linear map between operator systems is $\ast$-linear.
\end{prop}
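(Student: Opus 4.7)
The plan is to first reduce $\ast$-linearity to the assertion that $\vp(E^h) \subseteq F^h$: any $x \in E$ decomposes as $x = a + ib$ with $a, b \in E^h$, and if $\vp(a), \vp(b)$ are hermitian then
\[ \vp(x^*) = \vp(a) - i\vp(b) = \vp(a)^* - i\vp(b)^* = \vp(x)^*. \]

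With the reduction in hand, I would fix $x \in E^h$, write $\vp(x) = u + iv$ with $u, v \in F^h$, and work to show $v = 0$. The central trick is to feed $\vp$ the one-parameter family $x + it \cdot 1$ for $t \in \R$: because $x = x^*$, one has $(x+it\cdot 1)^*(x+it\cdot 1) = x^2 + t^2\cdot 1$, so the $C^*$-identity gives $\|x + it\cdot 1\| = \sqrt{\|x\|^2 + t^2}$. Unital contractivity then furnishes
\[ \|u + i(v + t\cdot 1)\| = \|\vp(x + it\cdot 1)\| \leq \sqrt{\|x\|^2 + t^2} \qquad (t \in \R). \]

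Realizing $F \subseteq \cc B(K)$ and testing against a unit vector $\xi \in K$, the elementary bound $|\im \ip{w\xi}{\xi}| \leq \|w\|$ applied with $w = u + i(v + t\cdot 1)$ yields $|\ip{v\xi}{\xi} + t|^2 \leq \|x\|^2 + t^2$, which rearranges to
\[ \ip{v\xi}{\xi}^2 + 2t\,\ip{v\xi}{\xi} \leq \|x\|^2 \qquad \text{for every } t \in \R. \]
Sending $t \to +\infty$ if $\ip{v\xi}{\xi} > 0$ or $t \to -\infty$ if $\ip{v\xi}{\xi} < 0$ produces a contradiction, so $\ip{v\xi}{\xi} = 0$ for every unit $\xi$; since $v$ is self-adjoint, its vanishing numerical range forces $v = 0$.

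There is no significant obstacle beyond the $C^*$-identity norm computation $\|x + it\cdot 1\|^2 = \|x\|^2 + t^2$, which critically uses $x = x^*$. Everything else is a single application of unital contractivity coupled with elementary numerical-radius bookkeeping.
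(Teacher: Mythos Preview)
Your proof is correct and follows essentially the same route as the paper's own argument: both reduce to showing $\vp(x)$ is hermitian for $x\in E^h$, exploit the $C^*$-identity to compute $\|x+it\cdot 1\|^2=\|x\|^2+t^2$, test against vector states $\xi\in K$, and then let $t\to\pm\infty$ to kill the imaginary part. The only cosmetic difference is that the paper works directly with the scalar functional $\vp_\xi(x)=\ip{\vp(x)\xi}{\xi}$ and bounds $|\im\vp_\xi(x)|\leq(1+t^2)^{1/2}-|t|$, whereas you decompose $\vp(x)=u+iv$ first and then square the inequality; these are equivalent bookkeeping choices.
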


\begin{proof}
    Let $\vp: E\to F$ be unital and contractive with $E\subset \cc B(H)$ and $F\subset \cc B(K)$. Let us fix a unit vector $\xi\in K$ and define $\phi_\xi(x) := \ip{\phi(x)\xi}{\xi}$. For $x\in E^h$ a contraction and $t\in \bb R$, using the identity $\|z^*z\| = \|z\|^2$ for all $z\in \cc B(K)$ we have
    \[|\phi_\xi(x) + it1|\leq \|x + it1\| = \|(x +it1)^*(x + it1)\|^{1/2} = \|x^2 + t^21\|^{1/2}\leq (1+t^2)^{1/2},\]
    since $x^2+t^21\preceq (\|x\|^2 + t^2)1$. It is now easy to see that 
    \[|\im \vp_\xi(x)|\leq (1-t^2)^{1/2} - |t|\to 0\ \textup{as}\ t\to\pm \infty.\]
    Since $\vp_\xi(x)$ is then real for all $x\in E^h$, we have that $\vp_\xi$ is $\ast$-linear. It follows that $\vp$ is $\ast$-linear since for a (complex) Hilbert space $K$ an operator $z\in \cc B(K)$ is hermitian if and only if $\ip{z\xi}{\xi}\in\bb R$ for all $\xi\in K$ \cite[Proposition 2.12]{Conway}. \qedhere

\end{proof}

We derive the following easily as a consequence of the previous two results.

\begin{cor} \label{cor:isom-order}
    Every complete order embedding of operator systems $\vp: E\to F$ is a \emph{completely isometric embedding}, that is, $\vp_n: E_n\to F_n$ and $(\vp^{-1})_n: \vp(E)_n\to E_n$ are isometries for all $n=1,2,\dotsc$. Every unital completely isometric embedding is a complete order embedding.
\end{cor}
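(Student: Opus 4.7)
The plan is to extract both implications from Proposition \ref{prop:order-norm}, which expresses each matrix norm $\|\cdot\|_n$ purely in terms of positivity of the double-sized test matrix $\begin{bmatrix} t1 & x \\ x^* & t1 \end{bmatrix}$, together with Propositions \ref{prop:contraction-positive} and \ref{ex:contraction-positive}, whose combined effect is that any unital contraction between operator systems is automatically positive.

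For the forward direction, assume $\vp$ is a complete order embedding. Fix $n$ and $x\in E_n$. Because $\vp$ is unital, the amplification $\vp_{2n}$ sends $\begin{bmatrix} t1_n & x \\ x^* & t1_n \end{bmatrix}\in E_{2n}$ to $\begin{bmatrix} t1_n & \vp_n(x) \\ \vp_n(x)^* & t1_n \end{bmatrix}\in F_{2n}$. Complete positivity of $\vp$ carries positivity of the former to positivity of the latter, while complete positivity of $\vp^{-1}$ on $\vp(E)$ provides the reverse implication. Hence the sets of $t$ witnessing positivity are identical on the two sides, so the two infima from Proposition \ref{prop:order-norm} agree, giving $\|\vp_n(x)\|_n = \|x\|_n$ for every $n$, which is the desired complete isometry (injectivity comes for free).

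For the converse, suppose $\vp$ is unital and completely isometric. For each $n$, $\vp_n:E_n\to F_n$ is a unital isometry, in particular a unital contraction. By Proposition \ref{ex:contraction-positive}, $\vp_n$ is $\ast$-linear, and Proposition \ref{prop:contraction-positive} then forces $\vp_n$ to be positive. Thus $\vp$ is $n$-positive for every $n$, i.e.\ completely positive. Applying the same argument to $\vp^{-1}:\vp(E)\to E$, which is itself unital and completely isometric, shows $\vp^{-1}$ is completely positive, so $\vp$ is a complete order embedding.

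There is no real obstacle here: the corollary is essentially a bookkeeping consequence of the earlier propositions, and the only point requiring minor care is checking that unitality transports through matrix amplifications, i.e.\ $\vp_n(I_n\otimes 1_E) = I_n\otimes \vp(1_E) = I_n\otimes 1_F$, which is immediate from the definition of $\vp_n$.
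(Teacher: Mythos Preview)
Your proof is correct and follows essentially the same approach the paper intends: the forward direction is exactly the content of Proposition~\ref{prop:order-norm} (which is also what underlies Exercise~\ref{ex:unital-2-positive-contraction}), and the converse is precisely the combination of Propositions~\ref{ex:contraction-positive} and~\ref{prop:contraction-positive} applied level by level, as the paper indicates when it says the corollary follows from ``the previous two results.''
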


\begin{exercise}[Kadison--Cauchy--Schwarz]
    Let $E\subset \cc B(H)$ be an operator system. If $\vp: E\to \cc B(K)$ is unital and $2$-positive, then $\vp(x)\vp(x^*)\preceq \vp(xx^*)$ for all $x\in E$. See \cite[Chapter 3]{paulsen2002completely} for this and many other basic properties satisfied by completely positive maps.
\end{exercise}

\begin{exercise}
    If $\vp: E\to F$ is completely positive, then $\|\vp\|_{\cb} = \|\vp(1)\|$. See \cite[Proposition 3.6]{paulsen2002completely}.
\end{exercise}

\begin{prop}[Choi's Theorem \cite{Choi1975}] \label{ex:chois-thm}
    Every completely positive map $\vp: M_n\to M_k$ is of the form $\vp(x) = \sum_{i=1}^{nk} a_i^*xa_i$ for some $a_1,\dots,a_{nk}\in M_{n,k}$. 
\end{prop}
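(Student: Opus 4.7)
The plan is to reduce Choi's theorem to the positivity of a single canonical matrix built from $\vp$ --- the \emph{Choi matrix} --- and then extract the operators $a_i$ via spectral decomposition. Define
\[ C_\vp := \sum_{i,j=1}^n e_{ij}\otimes \vp(e_{ij}) = [\vp(e_{ij})]_{i,j} \in M_n(M_k) \cong M_{nk}. \]

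First I would show $C_\vp \succeq 0$ as an element of $M_{nk}$. The matrix $E := [e_{ij}]_{i,j=1}^n \in M_n(M_n)$ equals $n \cdot \ket{\zeta}\bra{\zeta}$, where $\zeta := \tfrac{1}{\sqrt{n}} \sum_i e_i \otimes e_i \in \bb C^n \otimes \bb C^n$: expanding $\ket{\zeta}\bra{\zeta}$ in the tensor basis gives $\tfrac{1}{n}\sum_{i,j} e_{ij}\otimes e_{ij}$. Hence $E \succeq 0$, and since $\vp$ is in particular $n$-positive, $C_\vp = \vp_n(E) \in M_n(M_k)^+$. This is the only step in which complete positivity of $\vp$ is used --- $n$-positivity alone suffices.

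Next, a spectral decomposition writes $C_\vp = \sum_{l=1}^{r} w_l w_l^*$ with $r = \rk(C_\vp) \leq nk$ and each $w_l \in \bb C^n \otimes \bb C^k$. Under a fixed identification $\bb C^n \otimes \bb C^k \cong M_{n,k}$ sending $e_i \otimes f_p$ to the matrix unit in position $(i,p)$ (with conjugation convention chosen so that vector-adjoints correspond to matrix-adjoints), each $w_l$ corresponds to a matrix $a_l \in M_{n,k}$. A direct entrywise computation then shows that the $(i,j)$-block of $w_l w_l^*$, viewed in $M_n(M_k)$, equals $a_l^* e_{ij} a_l$. Equating $(i,j)$-blocks on both sides of $C_\vp = \sum_l w_l w_l^*$ yields $\vp(e_{ij}) = \sum_{l=1}^{r} a_l^* e_{ij} a_l$, and linearity in $x$ extends this to $\vp(x) = \sum_{l=1}^r a_l^* x a_l$ for every $x \in M_n$. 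Padding with zero matrices when $r < nk$ gives exactly $nk$ terms.

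The main obstacle is purely bookkeeping in the second step: one must pin down the identification of $\bb C^n \otimes \bb C^k$ with $M_{n,k}$ (and decide where to place the complex conjugations) so that the formula $a_l^* e_{ij} a_l$ emerges, rather than a transposed or unconjugated variant. Conceptually the proof contains just one idea --- positivity of the Choi matrix, arising from the positivity of the "maximally entangled" state $\zeta$ already living inside $M_n(M_n)$ --- and everything else is linear algebra.
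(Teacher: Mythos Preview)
Your proof is correct and is the standard Choi-matrix argument. Note, however, that the paper does not actually supply its own proof of this proposition: it simply cites \cite[Theorem 2.21]{aubrun} and moves on. The argument you give is essentially the one found there (and in Choi's original paper), so there is nothing to compare. Your remark that only $n$-positivity is needed is also correct and worth keeping, as is your honest flag about the conjugation convention in identifying $\bb C^n\otimes\bb C^k$ with $M_{n,k}$; concretely, if $w=\sum_{i,p}c_{ip}\,e_i\otimes f_p$ then one must take $a$ with entries $a_{ip}=\overline{c_{ip}}$ to get the $(i,j)$-block of $ww^*$ equal to $a^*e_{ij}a$.
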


The converse is also seen to be true as $x\mapsto a^*xa$ is completely positive. See \cite[Theorem 2.21]{aubrun} for a proof. The matrices $a_i$ above are said to form a \emph{Kraus decomposition} of the map $\vp$.

\begin{exercise} \label{ex:cp-adjoint}
      Use Choi's Theorem to deduce that if $\vp: M_n\to M_k$ is completely positive, then the \emph{adjoint} map $\vp^\dagger: M_k\to M_n$ defined by the functional equation $\tr_k(\vp(A)B) = \tr_n(A\vp^\dagger(B))$ for all $A\in M_n$, $B\in M_k$ is again completely positive. If $\vp$ is unital, then $\vp^\dagger$ is trace-preserving, $\tr_k\circ\, \vp = \tr_n$, and vice versa.
\end{exercise}

\begin{exercise}
    For $E = M_2$ show that 
    \[\vp: \begin{bmatrix} a & b\\ c & d\end{bmatrix}\mapsto \begin{bmatrix} (a+d)/2 & b\\ c & (a+d)/2\end{bmatrix}\] is unital and $1$-positive, but not $2$-positive. Show that $|\la|= 1/2$ is the optimal constant so that 
    \[\vp: \begin{bmatrix} a & b\\ c & d\end{bmatrix}\mapsto \begin{bmatrix} (a+d)/2 & \la b\\ \la c & (a+d)/2\end{bmatrix}\] is completely positive.
\end{exercise}

\begin{exercise}
    Let $v_1,v_2$ be the generators of the Cuntz algebra $\cc O_2$ (see Szabo's article in this volume), and consider $E := \spn\{1, v_1, v_2, v_1^*,v_2^*\}$. Show that $\vp: E\to E$ given by $\vp(a1 + bv_1 + cv_2 + dv_1^* + ev_2^*) = a1 + bv_1^* + cv_2^* + dv_1 + ev_2$ is unital and positive, but not $2$-positive, as $\vp_2$ is not contractive.
\end{exercise}

\begin{exercise}
    We have that $\vp: M_2\to M_2$ given by transposition, $\vp(x) := x^t$, is positive but not $2$-positive.
\end{exercise}

Remarkably, the structure of higher ordered cones and connecting maps is totally sufficient to abstractly characterize operator systems, as discovered in the seminal work of Choi and Effros \cite{Choi1977}. We will say that $(E,E^+,e)$ is a \emph{real ordered $\ast$-vector space} if $E$ is a $\ast$-vector space with a cone $E^+\subset E^h$ and order unit $e\in E^+$. 

\begin{defn} \label{defn:abstract-os}
    An \emph{abstract operator system} $(E,E^+,1)$ is a real ordered $\ast$-vector space with archimedean order unit $1$ along with a real ordered $\ast$-structure $(E_n, E_n^+,1_n)$ for each $n=1,2,\dotsc$, where $E_n\cong M_n\otimes E$, $(a\otimes x)^* = a^*\otimes x^*$, and $1_n = I_n\otimes 1$, so that $E_n^+\subset E_n^h$ and $a^*E_n^+a\subset E_k^+$ for all $a\in M_{n,k}$.  
\end{defn}

\begin{remark}
    Note that the condition $a^*E_n^+a\subset E_k^+$ for $n=1$ yields that $M_k^+\otimes E^+\subset E_k^+$ for all $k$ by the spectral theorem.
\end{remark}

\begin{remark} \label{rem:matrix-ordered-ast-vector-space}
    Omitting the requirement of an order unit in the previous definition leads to the definition of a \emph{matrix ordered $\ast$-vector space}
\end{remark}

\begin{thm}[Choi and Effros] \label{thm:choi-effros}
    Every abstract operator system $E$ admits a complete order embedding into $\cc B(H)$ for some Hilbert space $H$.
\end{thm}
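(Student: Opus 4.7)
The plan is to embed $E$ into $\cc B(H)$ as a direct sum of sufficiently many unital completely positive maps into matrix algebras, in the spirit of the GNS construction. The strategy has three ingredients: an order-theoretic Hahn--Banach argument at each matrix level, a conversion of the resulting states into matrix-valued CP maps, and a final rescaling to make these maps unital.

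First, I would equip each $E_n^h$ with the order seminorm $\|x\|_n = \inf\{r > 0 : -r\, 1_n \preceq x \preceq r\, 1_n\}$ coming from the order unit $1_n$. The archimedean axiom at level one, combined with the compression axiom $v^* E_n^+ v \subset E_1^+$ for $v \in M_{n,1}$ and a polarization argument, yields the archimedean property at every level and hence that each cone $E_n^+$ is closed in its order norm. This is the setup needed to run Hahn--Banach at every matrix level: given $x \in E_n^h \setminus E_n^+$, separate $x$ from the closed convex cone $E_n^+$ to produce a state $\phi : E_n \to \bb C$ (i.e., a unital positive functional) with $\phi(x) < 0$.

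From $\phi$ I would build a map $\Psi_\phi : E \to M_n$ by
\[ \Psi_\phi(a)_{pq} \;=\; n\, \phi(e_{pq} \otimes a). \]
The heart of the argument is verifying complete positivity: for $[a_{ij}] \in E_k^+$ and $\eta = (\eta_{ip}) \in \bb C^{kn}$, one checks the identity
\[ \ip{[\Psi_\phi(a_{ij})]\,\eta}{\eta} \;=\; n\,\phi(W [a_{ij}] W^*), \]
where $W \in M_{n,k}$ has entries $W_{pi} = \overline{\eta_{ip}}$; the compression axiom applied with $a = W^*$ places $W[a_{ij}]W^*$ in $E_n^+$, and positivity of $\phi$ then yields non-negativity. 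In the reverse direction, evaluating $(\Psi_\phi)_n(x)$ against the maximally entangled vector $\xi = n^{-1/2}\sum_i e_i \otimes e_i \in \bb C^{n^2}$ recovers $\ip{(\Psi_\phi)_n(x)\,\xi}{\xi} = \phi(x) < 0$, so $(\Psi_\phi)_n(x) \notin M_{n^2}^+$.

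To pass from CP to UCP, I would first perturb $\phi$ by a tiny multiple of the state $\phi_0([a_{ij}]) = n^{-1}\sum_i \alpha(a_{ii})$, where $\alpha$ is a state on $E$ (obtained via Hahn--Banach at level one); this $\phi_0$ corresponds to the UCP map $a \mapsto \alpha(a)I_n$, and the perturbation forces $\Psi_\phi(1)$ to be invertible in $M_n^+$ while preserving $\phi(x) < 0$. Rescaling to $\widetilde{\Psi}_\phi(a) := \Psi_\phi(1)^{-1/2} \Psi_\phi(a) \Psi_\phi(1)^{-1/2}$ produces a UCP map with $(\widetilde{\Psi}_\phi)_n(x) \notin M_{n^2}^+$, since non-positivity is preserved under invertible similarity. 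Taking $\Phi$ to be the direct sum of all such $\widetilde{\Psi}_\phi$ as $n$ and $\phi$ vary yields a UCP map $\Phi: E \to \cc B(H)$ that reflects positivity at every level---a complete order embedding. The chief obstacle is the complete positivity verification for $\Psi_\phi$: this is precisely where the abstract operator-system axioms---the entire matricial cone structure together with the compression axiom---do their essential work.
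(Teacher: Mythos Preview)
Your proof is correct and follows essentially the same architecture as the paper's sketch: separate non-positive elements from $E_n^+$ by Hahn--Banach states on $E_n$, convert these to completely positive maps $E\to M_n$ via the matrix-unit correspondence (which the paper packages as Lemma~\ref{lem:adjoint} together with Lemma~\ref{lem:state-cp}, while you carry out the computation directly with the compression axiom and the maximally entangled vector), upgrade CP to UCP, and take the direct sum. The only substantive variation is in the CP-to-UCP step: you perturb by a small multiple of a UCP map and then rescale by $\Psi_\phi(1)^{-1/2}$, whereas the paper's Lemma~\ref{lem:cp-to-ucp} instead compresses to the support projection of $\vp(1)$, possibly dropping to a smaller matrix size---both maneuvers are standard and equally valid here.
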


The representation obtained by the Choi--Effros theorem is canonical, but, like the Gelfand--Naimark theorem, it is impractical to work with. Nonetheless, the abstract characterization of operator provides us with a way to axiomatize the class of operator systems as metric structures. Additionally, the proof of this theorem introduces some important ideas, as we will see in the following sketch. 

To begin, we introduce the notion of a \emph{state} $\vp: E\to \bb C$ to just mean that $\vp$ is a unital, positive map. It follows essentially by an application of the Hahn--Banach theorem for archimedean ordered $\ast$-vector spaces that the states separate points in $E$ and completely determine the positivity structure;  to wit,

\begin{lem}\label{lem:state-positive}
    For an abstract operator system $E$ and $x\in E$, we have that $x\in E^+$ if and only if $\vp(x)\geq 0$ for every state $\vp$.
\end{lem}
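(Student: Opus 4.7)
The forward direction is immediate from the definition of a state (a state is in particular positive, so it maps $E^+$ into $[0,\infty)$). For the converse I would argue the contrapositive via a Hahn--Banach separation: given $x \notin E^+$, produce a state $\varphi$ with $\varphi(x) \notin [0,\infty)$.

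First I would reduce to the case $x \in E^h$. For any state $\varphi$ and any $y \in E^h$, the order-unit axiom furnishes $r > 0$ with $-r\cdot 1 \preceq y \preceq r\cdot 1$, so $\varphi(y) \in [-r,r] \subset \mathbb{R}$. Writing $x = a + ib$ with $a,b \in E^h$, if $x$ is not hermitian then $b \neq 0$, and by the (weak) archimedean axiom one checks that $b \in E^+$ and $-b \in E^+$ would force $-\varepsilon\cdot 1 \preceq b \preceq \varepsilon\cdot 1$ for all $\varepsilon > 0$, hence $b = 0$; so at least one of $b,-b$ lies outside $E^+$. If the hermitian case is known, it produces a state with $\varphi(b) \neq 0$, whence $\varphi(x) = \varphi(a) + i\varphi(b)$ has nonzero imaginary part, giving the desired contradiction.

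Assuming now $x \in E^h \setminus E^+$, I would set up Hahn--Banach as follows. Define the sublinear functional $q: E^h \to \mathbb{R}$ by $q(z) := \inf\{r \in \mathbb{R} : z \preceq r\cdot 1\}$ (finite-valued by the order-unit axiom), and set $\alpha := \sup\{s \in \mathbb{R} : s\cdot 1 \preceq x\} = -q(-x)$. The crucial claim is $\alpha < 0$. Clearly $\alpha \leq 0$, since $x \notin E^+$ means $0 \cdot 1 \not\preceq x$; and $\alpha = 0$ would force $x + \varepsilon\cdot 1 \in E^+$ for every $\varepsilon > 0$, contradicting $x \notin E^+$ provided one interprets the archimedean axiom in its standard cone-closure strengthening ($y + \varepsilon\cdot 1 \in E^+$ for all $\varepsilon > 0 \Rightarrow y \in E^+$). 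On the subspace $L = \mathbb{R}\cdot 1 + \mathbb{R}\cdot x$ I would define $\psi_0(s\cdot 1 + tx) := s + t\alpha$ and check, directly from the definitions of $\alpha$ and $q$, that $\psi_0 \leq q$ on $L$; this amounts to the inequalities $\alpha \leq q(x)$ for $t > 0$ and $t\alpha \leq q(tx)$ for $t < 0$, both of which follow by unwinding what $\alpha$ computes.

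Hahn--Banach then extends $\psi_0$ to a linear functional $\psi: E^h \to \mathbb{R}$ with $\psi \leq q$. Unitality $\psi(1) = 1$ is preserved, and for any $z \in E^+$ one has $q(-z) \leq 0$ (as $-z \preceq 0 \cdot 1$), whence $\psi(z) \geq -q(-z) \geq 0$, so $\psi$ is positive. Extending by $\mathbb{C}$-linearity via $\varphi(a + ib) := \psi(a) + i\psi(b)$ yields a state $\varphi: E \to \mathbb{C}$ with $\varphi(x) = \alpha < 0$, completing the argument. The main obstacle is the verification $\alpha < 0$, which is the one genuine use of the archimedean hypothesis; everything else is routine convex analysis plus the $\mathbb{C}$-linear extension.
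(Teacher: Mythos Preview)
The paper does not supply its own proof of this lemma; it simply cites \cite[Proposition 3.12]{paulsen-tomforde}. Your argument is correct and is essentially the standard Krein/Hahn--Banach separation argument one finds in that reference: build the sublinear gauge $q(z)=\inf\{r:z\preceq r\cdot 1\}$ on $E^h$, dominate the linear functional $s\cdot 1+tx\mapsto s+t\alpha$ on $\mathbb{R}1+\mathbb{R}x$, extend, and complexify. The reduction to the hermitian case via $x=a+ib$ and the pointedness of the cone is also handled correctly.

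One remark on the point you yourself flag. The step $\alpha<0$ genuinely requires the archimedean property in its ``cone-closure'' form, namely that $x+\varepsilon\cdot 1\in E^+$ for all $\varepsilon>0$ forces $x\in E^+$. The paper's Remark~\ref{rmk:pos-cone} states the archimedean axiom in the weaker form ($-\varepsilon 1\preceq x\preceq\varepsilon 1$ for all $\varepsilon>0$ implies $x=0$), but the cited source \cite{paulsen-tomforde} uses the stronger form throughout, and the paper tacitly relies on it elsewhere (for example in Remark~\ref{rem:archimedean} and Lemma~\ref{lem:archimedean-norm}). So your caveat is well placed but does not indicate a gap in your argument; it reflects a slight imprecision in the paper's stated definition rather than in your proof.
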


See \cite[Proposition 3.12]{paulsen-tomforde} for a proof.

\begin{remark} \label{rem:archimedean}
    Even if $(E, E^+, e)$ is a  real ordered $\ast$-vector space with $e$ being an not necessarily archimedean order unit, one may still consider the set of all states on $E$ and define $E'$ to be their closed, linear span in $E^*$. For each $x\in E$ we can define the \emph{evaluation map} $\hat x: E'\to \bb C$ by $\hat x(\vp) := \vp(x)$ and set $\widehat E := \{\hat x : x\in E\}$ to be the image of $E$ under this map. It can be seen that $(\widehat E, \widehat{E^+}, \hat e)$ is now an archimedean ordered, $\ast$-vector space which is known as the \emph{archimedeanization} of $E$. We refer the reader to \cite[Section 3.2]{paulsen-tomforde} for details. 
    
    The crucial point is that $e$ is an archimedean order unit \emph{exactly when} the evaluation map $x\mapsto \hat x$ is faithful, that is, has trivial kernel. In many of the constructions going forward we will define some sort of higher-level positivity/order structure on a archimedean ordered $\ast$-vector space, making it an operator system, by describing the set of higher-order positive maps. At a technical level the positive maps really induce an operator system structure on $\widehat E$ which we can identify with $E$ canonically as long as we know the evaluation map is faithful.
\end{remark}

\begin{lem}\label{lem:state-cp}
    For an abstract operator system $E$, every positive map $\vp: E\to \bb C$ is completely positive. In particular, every state is completely positive.
\end{lem}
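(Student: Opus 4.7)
The plan is to reduce the proof to a direct application of the axiom on abstract operator systems concerning compressions, using the classical characterization of positivity of a scalar matrix in terms of its values on column vectors. Specifically, I want to show that if $[x_{ij}]\in E_n^+$, then the matrix $[\vp(x_{ij})]\in M_n$ is positive semidefinite, which (since $M_n \cong \cc B(\C^n)$) is equivalent to the statement that
\[
\sum_{i,j=1}^n \overline{\alpha_i}\,\vp(x_{ij})\,\alpha_j \ge 0
\]
for every column vector $a = (\alpha_1,\dotsc,\alpha_n)^t \in M_{n,1}$.

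To establish this inequality, view $a$ as an element of $M_{n,1}$ and apply the axiom from Definition \ref{defn:abstract-os}, which guarantees that $a^* E_n^+ a \subset E_1^+ = E^+$. Thus the element $a^*[x_{ij}]a = \sum_{i,j}\overline{\alpha_i}\, x_{ij}\, \alpha_j$ lies in $E^+$. Applying the hypothesis that $\vp$ is positive gives
\[
0 \le \vp\!\left( \sum_{i,j} \overline{\alpha_i}\, x_{ij}\, \alpha_j \right) = \sum_{i,j} \overline{\alpha_i}\, \vp(x_{ij})\, \alpha_j,
\]
where the equality is just $\bb C$-linearity of $\vp$. This is exactly the desired scalar inequality, so $\vp_n([x_{ij}])\in M_n^+$, proving complete positivity. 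Equivalently, one can phrase this as the identity $\ad(a)\circ \vp_n = \vp\circ \ad(a)$ (observed in the discussion preceding the definition of complete positivity), applied to $a \in M_{n,1}$.

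There is no real obstacle here: the argument is essentially a tautological unpacking of the axioms, since an abstract operator system is defined precisely so that conjugation by rectangular matrices preserves positivity. The only ``content'' in the statement is the observation that positivity of a complex matrix is detected by pairing against column vectors, which converts the hypothesis on matrix amplifications back down to the level-one positivity of $\vp$.
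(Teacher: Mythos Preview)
Your proof is correct and is essentially identical to the paper's: both reduce positive semidefiniteness of $\vp_n(x)$ to the scalar condition $a^*\vp_n(x)a\geq 0$ for $a\in M_{n,1}$, then use the commutation $a^*\vp_n(x)a=\vp(a^*xa)$ together with the axiom $a^*E_n^+a\subset E^+$. The only difference is that you write things out in coordinates while the paper uses the more compact $\ad(a)$ notation.
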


\begin{proof}
    Let $x\in E_n^+$. We want to show that $\vp_n(x)\in M_n$ is positive semidefinite. To this end, choose $a\in M_{n,1}$, and observe that $a^*\vp_n(x)a = \vp(a^*xa)\geq 0$ since $a^*xa\in E^+$. We are therefore done since $a^*\vp_n(x)a\geq 0$ for all $a\in M_{n,1}$ characterizes the matrix $\vp_n(x)$ as being positive semidefinite. \qedhere
\end{proof}

\begin{notation}
    For (abstract) operator systems $E$ and $F$, we use $\cp(E,F)$ and $\ucp(E,F)$, respectively, to denote the set of all completely positive maps from $E$ to $F$ and the set of all unital, completely positive maps from $E$ to $F$.
\end{notation}

\begin{lem} \label{lem:cp-to-ucp}
    Let $E$ be an abstract operator system. For every $\vp\in \cp(E,M_n)$ there is $\vp'\in \ucp(E,M_k)$ for some $k\leq n$ so that $\ker(\vp') = \ker(\vp)$.
\end{lem}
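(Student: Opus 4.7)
The plan is to compress and normalize $\vp$ using $\vp(1)$. Let $k := \rk(\vp(1))\leq n$ and let $p\in M_n$ be the orthogonal projection onto the range of $\vp(1)$. The central claim is that $\vp(x) = p\vp(x)p$ for all $x\in E$, so that $\vp$ really takes values in a $k$-dimensional corner of $M_n$; once this is established, normalizing by the positive invertible part of $\vp(1)$ in that corner will produce the desired ucp map.

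First I would verify that for every $x\in E^+$ one has $\vp(x) = p\vp(x)p$. Since $0\preceq x\preceq \|x\|1$ and $\vp$ is positive, we get $0\preceq \vp(x)\preceq \|x\|\vp(1)$. Because $\vp(1) = p\vp(1)p$, the standard fact that $0\preceq A\preceq B$ with $B(1-p) = 0$ forces $A(1-p) = 0$ (indeed $(1-p)A(1-p)\preceq (1-p)B(1-p) = 0$ together with $A\succeq 0$ gives $A^{1/2}(1-p) = 0$) yields $\vp(x) = p\vp(x)p$. By writing hermitian elements as differences of positives and complexifying, this extends to all $x\in E$.

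Next, pick an isometry $v\in M_{n,k}$ with $vv^* = p$ and $v^*v = 1_k$, and define $\psi := \ad(v)\circ \vp: E\to M_k$. Then $\psi$ is completely positive as a composition of completely positive maps, and $\psi(1) = v^*\vp(1)v$ is positive in $M_k$; moreover it is invertible in $M_k$ because $\vp(1)$, viewed on its range $p\C^n$, is invertible and $v$ identifies $\C^k$ isometrically with $p\C^n$. Set $T := \psi(1)^{1/2}\in M_k^+$, which is invertible, and define
\[\vp'(x) := T^{-1}\psi(x)T^{-1} = \ad(T^{-1})\bigl(\psi(x)\bigr).\]
Then $\vp'$ is completely positive as another adjoint-composition, and $\vp'(1) = T^{-1}\psi(1)T^{-1} = 1_k$, so $\vp'\in \ucp(E,M_k)$.

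Finally, for the kernel equality, the containment $\ker(\vp)\subset \ker(\vp')$ is immediate from the definition. Conversely, if $\vp'(x) = 0$, then $\psi(x) = T\vp'(x)T = 0$, so $v^*\vp(x)v = 0$. Now use the compression identity from the first step: $\vp(x) = p\vp(x)p = vv^*\vp(x)vv^* = v\psi(x)v^*$, whence $\vp(x) = 0$. Thus $\ker(\vp') = \ker(\vp)$, completing the argument. The only genuinely substantive step is the first one, showing $\vp(x) = p\vp(x)p$; everything else is bookkeeping with compressions and a normalization by $\psi(1)^{-1/2}$.
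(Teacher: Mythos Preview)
Your proof is correct and follows the same overall architecture as the paper's: compress to the support of $\vp(1)$, then normalize by the invertible part. The paper also takes $p$ to be the support projection of $z:=\vp(1)$, shows $\vp(x)p^\perp=0$, and then sets $\vp'(x)=a^*\vp(x)a$ for $a\in M_{n,k}$ with $pa=a$ and $a^*za=I_k$; your $a=vT^{-1}$ is exactly such a choice.

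The one substantive difference is how the compression identity is established. The paper invokes $2$-positivity: for a contraction $x$ it applies $\vp_2$ to $\begin{bmatrix}1&x\\x^*&1\end{bmatrix}\succeq 0$, then compresses by $p^\perp$ on one corner to get a positive $2\times 2$ block with a zero diagonal entry, forcing $\vp(x)p^\perp=0$ directly for \emph{all} $x$. You instead use only $1$-positivity: for $x\in E^+$ the sandwich $0\preceq\vp(x)\preceq r\vp(1)$ forces $(1-p)\vp(x)(1-p)=0$ and hence $\vp(x)(1-p)=0$, and you then extend by linearity through the decomposition into positives. Your route is slightly more elementary (it never needs $\vp_2$), while the paper's $2\times 2$ trick handles arbitrary $x$ in one stroke without splitting into positive parts. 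One small cosmetic point: at this stage of the paper $E$ is still abstract, so it is cleaner to write ``$x\preceq r1$ for some $r>0$'' (using that $1$ is an order unit) rather than $\|x\|1$, but this does not affect the argument.
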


\begin{proof}
    If $\vp$ is unital, then there is nothing to prove. Let $z := \vp(1)\in M_n^+$ and let $p$ be its support projection, that is, the largest projection $p$ so that $pxp$ is invertible $pM_n p$. Let $x\in E$. Since $1_2$ is an order unit in $E_2$ we may assume without loss of generality that $\begin{bmatrix} 1 & x\\ x^* & 1\end{bmatrix}\succeq 0$, so $\begin{bmatrix} z & \vp(x)\\ \vp(x)^* & z\end{bmatrix} \succeq 0$ as well. It follows that 
    \[\begin{bmatrix} 0 & \vp(x)p^\perp\\ p^\perp\vp(x^*) & z\end{bmatrix} = \begin{bmatrix} p^\perp z p^\perp & \vp(x)p^\perp\\ p^\perp\vp(x^*) & z\end{bmatrix} \succeq 0,\]
    hence $\vp(x)p^{\perp} = 0$  for all $x\in E$ and $p^\perp \vp(x)=0$ as well by $\ast$-linearity of $\vp$. If $p$ is of rank $k$, we find $a\in M_{n,k}$ so that $pa = a$ and $a^*za = I_k\in M_k$. Setting $\vp'(x) := a^*\vp(x)a = a^*p\vp(x)pa\in \ucp(E,M_k)$, it is clear that $\vp'(x) = 0$ if and only if $\vp(x)=0$.
\end{proof}

\begin{lem}\label{lem:adjoint}
    Let $E$ and $F$ be abstract operator systems. There is an affine bijection between $\cp(E_n, F)$ and $\cp(E,F_n)$, $\cp(E_n,F)\ni\vp\longleftrightarrow \tilde\vp\in \cp(E,F_n)$, given by
    \[ \tilde\vp(x) := [\vp(e_{ij}\otimes x)]_{ij}, \quad \vp([x_{ij}]) := \sum_{i,j}\tilde\vp(x_{ij})_{ij},\]
    where $e_{ij}$ are the standard matrix units for $M_n$.
\end{lem}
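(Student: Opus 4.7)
The plan is to separate the algebraic bijection from the preservation of the CP condition. The algebraic part is bookkeeping: any $Y = [y_{ij}] \in E_n$ decomposes as $\sum_{i,j} e_{ij}\otimes y_{ij}$, so by linearity $\vp$ is determined by its values on elementary tensors, and the two formulas are manifestly inverse linear bijections between all $\ast$-linear maps $E_n\to F$ and all $\ast$-linear maps $E\to F_n$. What requires work is showing that under this bijection $\vp$ is CP if and only if $\tilde\vp$ is CP.

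The central idea is to reformulate both directions of the bijection in a symmetric way using the (unnormalized) maximally entangled vector $\xi := \sum_{i=1}^n e_i\otimes e_i \in \bb C^{n^2}$, noting that $\xi\xi^* = \sum_{i,j} e_{ij}\otimes e_{ij}$ is a positive rank-one element of $M_n\otimes M_n = M_{n^2}$. A direct expansion of $\vp_n$ on $\xi\xi^*\otimes x$ (viewed in $M_n(E_n)$ after reshuffling tensor factors) gives the forward identity
\[\tilde\vp(x) = \vp_n\bigl(\xi\xi^*\otimes x\bigr),\]
while computing the $(i,j)$-to-$(i,j)$ entries of $\tilde\vp_n(Y) = [\tilde\vp(Y_{ij})]_{ij} \in F_{n^2}$ gives the dual identity
\[\vp(Y) = \xi^*\tilde\vp_n(Y)\xi.\]

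The forward CP direction then follows by amplifying the first identity: for $y \in E_m^+$,
\[\tilde\vp_m(y) = (\vp_n)_m\bigl(\xi\xi^*\otimes y\bigr),\]
so it suffices to show $\xi\xi^*\otimes y \in E_{n^2 m}^+$ (since $(\vp_n)_m$ preserves positivity, $\vp$ being CP). This in turn is a supporting tensor-product positivity lemma: for any $A\in M_N^+$ and $z\in E_m^+$, one has $A\otimes z\in E_{Nm}^+$, proved by spectral decomposition $A=\sum_l v_lv_l^*$ with $v_l\in M_{N,1}$ and the compression axiom $a^*E_m^+a\subset E_{Nm}^+$ applied with $a = v_l^*\otimes I_m$. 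Similarly, the reverse CP direction amplifies the second identity to
\[\vp_m(Y) = (I_m\otimes \xi)^*\,\tilde\vp_{nm}(Y)\,(I_m\otimes \xi),\]
and for $Y\in E_{nm}^+$ we have $\tilde\vp_{nm}(Y)\in F_{mn^2}^+$ by CP of $\tilde\vp$, so $\vp_m(Y)\in F_m^+$ directly by the compression axiom applied with $a = I_m\otimes \xi \in M_{mn^2,m}$.

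The main technical obstacle is tracking the various tensor-factor reshufflings $M_n\otimes M_n\otimes E \cong M_{n^2}\otimes E$, $M_m(E_n) \cong M_n(E_m)$, etc., and confirming that each is implemented by conjugation by a permutation unitary and hence preserves the abstract positive cone (again via the compression axiom with a unitary). Once this bookkeeping is granted, the whole argument reduces to the elegant pair of dual identities above combined with the tensor-product positivity lemma, and both directions become parallel.
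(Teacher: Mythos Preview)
Your proof is correct and at its core coincides with the paper's: both establish the identity $\tilde\vp(x)=\vp_n(\xi\xi^*\otimes x)$ (the paper obtains $\xi\xi^*\otimes x$ as $(\Delta_n\otimes\id_E)\circ\ad(j_n^*)(x)$, where $\Delta_n:M_n\to M_n\otimes M_n$ is the comultiplication and $j_n$ the all-ones vector) and the dual identity $\vp(Y)=\xi^*\tilde\vp_n(Y)\xi$ (which the paper writes as $\ad(j_n)\circ(\Delta_n^\dagger\otimes\id_F)$). The difference is in how positivity of the intermediate step is justified. The paper argues that $\Delta_n$ is a $\ast$-homomorphism, invokes Choi's theorem (Proposition~\ref{ex:chois-thm}) to write $\Delta_n\otimes\id_E$ as a sum of compressions, and for the reverse direction appeals to Exercise~\ref{ex:cp-adjoint} to get complete positivity of $\Delta_n^\dagger$. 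Your version bypasses both of these by proving the tensor-positivity fact $M_N^+\otimes E_m^+\subset E_{Nm}^+$ directly from spectral decomposition and the compression axiom, and by recognizing the reverse map as a single compression by $I_m\otimes\xi$. This is a cleaner and more self-contained route---you use only the axioms of an abstract operator system and never need Choi's theorem---at the modest cost of having to track the permutation-unitary reshufflings explicitly, which you correctly flag.
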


\begin{proof}
    Let's start with the assumption that $\vp: E_n\to F$ is completely positive. It is clear that the map $\Delta_n: M_n\to M_n\otimes M_n$ given by \[\Delta_n([x_{ij}]) := \sum_{i,j} e_{ij}\otimes x_{ij}e_{ij}\]
    is a (non-unital) $\ast$-homomorphism, hence is completely positive. Therefore by Proposition \ref{ex:chois-thm}, we have that $\Delta_n(x) = \sum_i \ad(v_i)(x)$, $v_i\in M_{n,n^2}$; hence, for all $x\in E_n^+$ we have $(\Delta_n\otimes \id_E)(x) = \sum_i (\ad(v_i)\otimes \id_E)(x)\in E_{n^2}^+$. Since $\vp$ is completely positive, so is $\id_{M_n}\otimes\,\vp: M_n\otimes E_n\to F_n$. Let $j_n\in M_{n,1}$ be the matrix of all $1$'s. We have that the connecting map $\ad(j_n^*): E\to E_n$ as defined at the beginning of this section is completely positive. We now compute that
    \[\tilde\vp(x) = \left((\id_{M_n}\otimes\, \vp_n)\circ (\Delta_n\otimes \id_E)\circ \ad(j_n^*)\right)(x),\] 
    so $\tilde\vp: E\to F_n$ is completely positive.
    
    We now turn to the case that $\tilde\vp: E\to F_n$ is completely positive, so $\id_{M_n}\otimes\, \tilde\vp: E_n\to M_n\otimes F_n$ is as well. We have that
    \[\vp([x_{ij}]) = \ad(j_n)\circ (\Delta_n^\dagger\otimes\id_F)\circ(\id_{M_n}\otimes\, \tilde\vp)([x_{ij}])\]
    is completely positive by Exercise \ref{ex:cp-adjoint}. \qedhere
    
\end{proof}

We now sketch a proof of Theorem \ref{thm:choi-effros}.

\begin{proof}[Sketch of a Proof of Theorem \ref{thm:choi-effros}]
    Consider all $\vp\in \ucp(E,M_n)$, over all $n=1,2,\dotsc$, where we will write $n(\vp) := n$ for clarity. By Lemma \ref{lem:adjoint} we have that $\cp(E,M_n)\cong \cp(E_n,\bb C)$, which by Lemmas \ref{lem:cp-to-ucp}, \ref{lem:state-cp}, and \ref{lem:state-positive} shows that \[\bigcup_{1\leq k\leq n}\ucp(E,M_k)\] completely determines the positive cone in $E_n$. For concision let's denote \[I := \bigcup_{n=1}^\infty \ucp(E,M_n).\] 
    
    It is now relatively straightforward to check that setting $H = \bigoplus_{\vp\in I} \bb C^{n(\vp)}$ we have that $\Phi: E\to \cc B(H)$ given by 
    \[\Phi(x) = \prod_{\vp\in I} \vp(x)\in \prod_{\vp\in I} M_{n(\vp)} \subset \cc B(H)\]
    is a complete order embedding. \qedhere
\end{proof}

\begin{remark}\label{rmk:op-subsys-matrix-product}
    It is useful to note that the proof shows that every operator system is a subsystem of a direct product of matrix algebras.
\end{remark}

%For the second proof we will take a slightly different tack, essentially following the proof of \cite[Theorem 13.1]{paulsen2002completely}. This proof will be important later on when we attempt the axiomatization of operator systems as metric structures. 
%
%\begin{proof}[Sketch of a Second Proof of Theorem \ref{thm:choi-effros}]
%    Following the ideas from Proposition \ref{prop:order-norm} we begin by defining a semi-norm $\|x\|_n'$ on each $E_n$ by 
%    \[\|x\|_n' := \inf\left\{t>0 : \begin{bmatrix} te_n & x\\ x^* & te_n\end{bmatrix}\in E_{2n}^+\right\},\]
%    where $e_n = I_n\otimes e$ is the order unit in $E_n$. The fact that $e$ is an archimedean order unit for $E$ can then be used to show that $\|x\|_n'$ is actually a norm and that $E_n^+$ forms a closed cone. (We refer the reader to \cite[Proposition 13.3]{paulsen2002completely} and Lemma \ref{lem:archimedean-norm} below for details.)  Once we know that each $E_n^+$ is closed, setting $E_n'$ to be the norm-closed linear span of $\ucp(E,M_n)$, the Krein--Milman theorem for cones can be used to show, with some work, that the evaluation map $E_n\ni x\mapsto \hat x: E'\to \bb C$ satisfies $\hat x(\vp)\geq 0$ for all $\vp\in \ucp(E,M_n)$ if and only if $x\in E_n^+$. Borrowing the notation from the sketch of the first proof,
%    \[\Phi(x) = \bigoplus_{\vp\in I} \vp(x)\in \bigoplus_{\vp\in I} M_{n(\vp)} \subset \cc B(H)\]
%    is a unital complete order embedding. It follows from Corollary \ref{cor:isom-order} that $\Phi$ is a complete isometric embedding.
%\end{proof}

One of the main upshots of the Choi--Effros representation theorem from our vantage is that it describes the following ``dual'' way of viewing an operator system structure via a coherent collection of admissible positive maps into each matrix algebra. Let $(E,E^+,1)$ be a real ordered $\ast$-vector space with archimedean order unit, and let $\cc S(E)$ denote the set of states of $E$.

\begin{prop} \label{prop:dual-os}
    Suppose that for each $n=1,2,\dotsc$ we define a closed set $\cc S_n$ of unital, linear maps from $E$ to $M_n$ so that $\cc S_1 = \cc S(E)$ and for each $f\in \cc S_n$ and $\vp\in \ucp(M_n,M_k)$  we have that $\vp\circ f\in \cc S_k$. Then the collection $(\cc S_n)$ completely determines an operator system structure on $E$ so that $\cc S_n = \ucp(E,M_n)$ for all $n$.
\end{prop}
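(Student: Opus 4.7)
The plan is to use the family $(\cc S_n)$ to endow each $E_n = M_n \otimes E$ with a positivity structure, verify that the resulting system satisfies the axioms of Definition \ref{defn:abstract-os}, and then identify $\ucp(E, M_n)$ with $\cc S_n$. The adjoint correspondence of Lemma \ref{lem:adjoint} provides the passage between maps $f \colon E \to M_n$ and linear functionals on $E_n$: to each $f \in \cc S_n$ attach
\[
\vp_f \colon E_n \to \bb C, \qquad \vp_f([X_{ij}]) := \sum_{i,j} f(X_{ij})_{ij},
\]
and observe $\vp_f(1_n) = \tr(f(1)) = n$. Define
\[
E_n^+ := \{X \in E_n : \vp_g(X) \geq 0 \text{ for all } g \in \cc S_n\}.
\]
By Lemma \ref{lem:state-positive} together with $\cc S_1 = \cc S(E)$, this yields $E_1^+ = E^+$. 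A preliminary remark: every $f \in \cc S_n$ is $\ast$-linear, because for any state $\omega$ of $M_n$ the composition $\omega \circ f$ lies in $\cc S_1 = \cc S(E)$, hence is $\ast$-linear, and states of $M_n$ separate points; in particular $E_n^+ \subseteq E_n^h$.

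To verify the remaining axioms of Definition \ref{defn:abstract-os}, the cone properties and $1_n \in E_n^+$ are immediate, while archimedean-ness at $1_n$ follows from the pointwise closedness of $\cc S_n$. The crucial compatibility $a^* E_n^+ a \subseteq E_k^+$ for $a \in M_{n,k}$ is established via the direct computation
\[
\vp_g(a^* X a) = \sum_{p,q} \alpha_p^* g(X_{pq}) \alpha_q = \vp_h(X),
\]
where $\alpha_p \in \bb C^k$ has entries $\overline{a_{pi}}$ (the conjugate of the $p$-th row of $a$) and $h(x) := \bar a\, g(x)\, a^T = (\ad(a^T)\circ g)(x) \in M_n$. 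When $aa^* = I_n$ one has $\ad(a^T) \in \ucp(M_k, M_n)$, so $h \in \cc S_n$ by the composition hypothesis; the general case reduces to this by rescaling and dilating $a$ to a co-isometry (padding the defect via a fixed state $\omega$ of $E$, whose tensor with the identity already lies in $\cc S$ by applying the hypothesis with $n = 1$). Either way $\vp_h(X) \geq 0$ by the definition of $E_n^+$.

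Both directions of the equality $\cc S_n = \ucp(E, M_n)$ then rest on the identity
\[
\zeta^* (\id_{M_k} \otimes f)(X) \zeta = \sum_{p,q} \zeta_p^* f(X_{pq}) \zeta_q = \vp_{\ad(Z)\circ f}(X),
\]
valid for $\zeta = \sum_p e_p \otimes \zeta_p \in \bb C^{kn}$ and $Z \in M_{n,k}$ with columns $\zeta_p$. For $\cc S_n \subseteq \ucp(E, M_n)$, given $f \in \cc S_n$ and $X \in E_k^+$, the right-hand side is non-negative by the same isometric-dilation reduction as in Step~2, hence $(\id_{M_k} \otimes f)(X) \succeq 0$. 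For the reverse, given $f \in \ucp(E, M_n)$, the choice $\eta = \sum_i e_i \otimes e_i$ specializes the identity to $\vp_f(X) = \eta^* (\id_{M_n} \otimes f)(X) \eta \geq 0$ on $E_n^+$, so $\vp_f/n$ is a state of the operator system $E_n$. A bipolar/Hahn--Banach argument in the weak-$\ast$ pairing $(g, X) \mapsto \vp_g(X)$ on $L(E, M_n) \times E_n$ places $\vp_f$ in the weak-$\ast$-closed convex cone generated by $\{\vp_g : g \in \cc S_n\}$; tracking the unital normalization $\vp_f(1_n) = n$ and invoking the closedness of $\cc S_n$ (together with the convexity implicit in the matricial state space setup) pins $f$ down as an element of $\cc S_n$. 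The main obstacle is precisely this reverse inclusion: the separation step genuinely requires $\cc S_n$ to be closed and convex, and convexity—while natural in context—is not stated outright, so one either postulates it implicitly or replaces $\cc S_n$ by its closed convex hull without changing $E_n^+$.
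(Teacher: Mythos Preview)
Your approach is essentially the same as the paper's: define $E_n^+$ as the set of $X\in E_n$ with $\sum_{i,j}\vp_{ij}(X_{ij})\geq 0$ for all $\vp\in\cc S_n$, then check the axioms of Definition~\ref{defn:abstract-os}. The paper's proof is only a two-line sketch that stops after writing down this cone and declares the rest an exercise, so you have in fact carried out considerably more of the verification than the paper does.

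One comment: you are right to flag the reverse inclusion $\ucp(E,M_n)\subseteq\cc S_n$ as the delicate point, and your observation that the bipolar argument needs $\cc S_n$ to be convex is correct and is not addressed in the paper's sketch. Convexity does \emph{not} follow from the stated hypotheses (closure under post-composition with $\ucp(M_n,M_k)$ alone does not let you form convex combinations of two maps in $\cc S_n$, since there is no assumption that direct sums $f\oplus g$ land in $\cc S_{2n}$). So your caveat is a genuine lacuna in the proposition as stated; your suggested fix---replace $\cc S_n$ by its closed convex hull, which leaves $E_n^+$ unchanged---is the natural repair, and is presumably what is intended.
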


\begin{proof}[Sketch of Proof]
    For each $\vp\in \cc S_n\subset \cc L(E, M_n)$, write $\vp(x) = [\vp_{ij}(x)]$. Define $E_n^+\subset E_n$ to be all $x\in E_n$ so that $\sum_{i,j} \vp_{ij}(x_{ij})\geq 0$ for all $\vp\in \cc S_n$. We leave it as an exercise that this defines a compatible family of real ordered structures on each $E_n$ satisfying Definition \ref{defn:abstract-os}.
\end{proof}

Perhaps the most important basic result in the theory of operator systems is Arveson's extension theorem.

\begin{thm}[Arveson's Extension Theorem] \label{thm:arveson}
    For every unital inclusion $E\subset F$ of operator systems and every unital, completely positive map $\vp: E\to \cc B(H)$, there exists a unital, completely positive map $\tilde\vp: F\to \cc B(H)$ extending $\vp$.
\end{thm}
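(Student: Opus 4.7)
The plan is to prove Arveson's theorem in two stages: first the case where $H$ is finite-dimensional (by a Hahn--Banach argument), and then the general case (by a weak-operator compactness argument).

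\emph{Stage 1: $H = \bb C^n$.} Via Lemma \ref{lem:adjoint} applied with $F = \bb C$, a UCP map $\vp: E \to M_n$ corresponds to a positive linear functional $\rho$ on $E_n$ satisfying $\rho(e_{ij} \otimes 1) = \delta_{ij}$; in particular $\rho(1_n) = n$, so after rescaling by $1/n$ I obtain a state $\sigma := \rho/n$ on $E_n$. The extension problem thus reduces to extending $\sigma$ to a state on $F_n$, since the matrix-unit constraint is automatic for any extension agreeing with $\sigma$ on $E_n$. For the state extension I would use the archimedean order-unit seminorm $\|y\|_{\mathrm{ou}} := \inf\{r > 0 : -r \cdot 1_n \preceq y \preceq r \cdot 1_n\}$ on $F_n^h$: the restriction of $\sigma$ to $E_n^h$ is a unital, $\|\cdot\|_{\mathrm{ou}}$-contractive $\bb R$-linear functional, so the classical Hahn--Banach theorem extends it to a unital, contractive $\bb R$-linear functional $\tilde\sigma$ on $F_n^h$, which I would then complex-linearize. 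The extension is automatically positive by the argument of Proposition \ref{prop:contraction-positive}: for $y \in F_n^+$ with $r := \|y\|_{\mathrm{ou}}$, the inequality $0 \preceq r \cdot 1_n - y \preceq r \cdot 1_n$ gives $\|r \cdot 1_n - y\|_{\mathrm{ou}} \leq r$ and hence $|r - \tilde\sigma(y)| \leq r$, forcing $\tilde\sigma(y) \geq 0$. Rescaling by $n$ and applying Lemma \ref{lem:adjoint} in reverse produces the desired UCP extension $\tilde\vp: F \to M_n$.

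\emph{Stage 2: general $H$.} For each finite-dimensional subspace $K \subset H$ with orthogonal projection $P_K$, I would form the compression $\vp_K: E \to \cc B(K)$, $\vp_K(x) := P_K \vp(x)|_K$, which is UCP; by Stage 1 it extends to a UCP map $\tilde\vp_K: F \to \cc B(K)$. Viewing each $\tilde\vp_K$ as a CP map into $\cc B(H)$ via the corner inclusion $T \mapsto T \oplus 0$ on $K \oplus K^\perp$, one has $\|\tilde\vp_K(x)\| \leq \|x\|$ since the cb-norm of the composite equals $\|P_K\| \leq 1$. The net $\{\tilde\vp_K\}$ indexed by finite-dimensional subspaces (directed by inclusion) lies in the compact product $\prod_{x \in F}\{T \in \cc B(H) : \|T\| \leq \|x\|\}$, each factor carrying the weak-operator topology (compact on norm-bounded sets by Banach--Alaoglu). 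Extract a cluster point $\tilde\vp: F \to \cc B(H)$. Linearity and complete positivity pass to pointwise-WOT limits (positive operators form a WOT-closed cone in each matrix amplification); unitality follows from $P_K \to 1$ weakly as $K \nearrow H$; and $\tilde\vp|_E = \vp$ since $P_K \vp(x) P_K \to \vp(x)$ in WOT for each $x \in E$.

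The main obstacle is Stage 1: the Hahn--Banach extension is, a priori, only contractive in the order-unit norm rather than manifestly positive. It is precisely the archimedean property of $1_n$ that bridges these two notions and converts a norm-preserving extension into a positivity-preserving one; the rest of the argument is essentially formal, with Lemma \ref{lem:adjoint} encoding the $M_n$-valued problem as a scalar one and Banach--Alaoglu handling the passage from finite rank to arbitrary $H$.
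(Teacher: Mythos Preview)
Your proposal is correct and follows essentially the same two-stage strategy as the paper's sketch: reduce the finite-dimensional case to extending a state on $E_n$ via Lemma~\ref{lem:adjoint}, then pass to general $H$ by compressing to finite-dimensional subspaces and taking a pointwise weak cluster point. The only cosmetic difference is that the paper invokes the Hahn--Banach theorem for ordered vector spaces directly, whereas you unpack that step by applying classical Hahn--Banach to the order-unit norm on $F_n^h$ and then recovering positivity from contractivity and the archimedean property; these are the same argument at different levels of detail.
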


We will outline a proof here: for a detailed proof see \cite[Theorem 1.6.1]{BrownOzawa} or \cite[Theorem 7.5]{paulsen2002completely}

\begin{proof}[Sketch of Proof]
    We first note that any positive map $\vp: E\to \bb C$ extends to a positive (hence, completely positive) map $\tilde\vp: F\to \bb C$ by the Hahn--Banach extension theorem for ordered vector spaces \cite[Corollary 9.12]{Conway}. We now consider the case when $\dim(H) = n<\infty$. We have that $\CP(E,\cc B(H))\cong \CP(E,M_n)\cong \CP(M_n(E),\bb C)$. As before, we can now extend any $\vp\in \CP(M_n(E),\bb C)$ to $\tilde\vp\in \CP(M_n(F),\bb C)\cong \CP(F,M_n)$. 
    %Since $\tilde\vp$ is unital and (2-)positive, it is contractive by Exercise \ref{ex:unital-2-positive-contraction}.
    
    Now let $H$ be arbitrary and let $(H_i)_{i\in I}$ be the directed set of all finite-dimensional subspaces of $H$, ordered by inclusion. Let $p_i: H\to H_i$ be the orthogonal projection onto $H_i$ and consider the unital completely positive maps $\vp_i: E\to \cc B(H_i)$ defined by $\vp_i(x) = p_i\vp(x)p_i$. We may extend each to $\tilde\vp_i: F\to \cc B(H_i)$ unitally and completely positively. Since all maps are contractive, we can now take a pointwise-ultraweak cluster point of the net $\tilde\vp_i$ to obtain a unital, completely positive map $\tilde\vp: F\to \cc B(H)$. It is straightforward to check that $\tilde\vp$ extends $\vp$.
\end{proof}

We now turn our attention to describing the dual of an operator system. We begin with the following proposition. 

\begin{lem}
    Let $E$ be an operator system. Every bounded linear functional $\vp: E\to \bb C$ is a linear combination of four states. Every completely bounded map $\vp: E\to M_n$ is a linear combination of four completely positive maps.
\end{lem}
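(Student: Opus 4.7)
The plan is to establish the scalar case first and then deduce the matrix-valued case by translating it, via Lemma \ref{lem:adjoint}, into a scalar statement on the larger operator system $E_n$.

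For a bounded $\vp: E\to\bb C$, I would begin with the standard splitting $\vp = \vp_1 + i\vp_2$ into $\ast$-linear parts $\vp_1(x) := \tfrac{1}{2}(\vp(x) + \overline{\vp(x^*)})$ and $\vp_2(x) := \tfrac{1}{2i}(\vp(x) - \overline{\vp(x^*)})$; each is bounded and Hermitian. It then suffices to write each such $\psi$ as $\psi_+ - \psi_-$ with $\psi_\pm$ positive, since normalizing by $\psi_\pm(1)$ produces states. To effect the Jordan-type decomposition I would use Theorem \ref{thm:choi-effros} to realize $E$ as a subsystem of $\cc B(H)$, extend $\psi$ by Hahn--Banach to a bounded Hermitian functional $\tilde\psi$ on $\cc B(H)$, apply the classical Jordan decomposition for bounded Hermitian functionals on a $C^*$-algebra to obtain $\tilde\psi = \tilde\psi_+ - \tilde\psi_-$ with $\tilde\psi_\pm$ positive on $\cc B(H)$, and restrict back to $E$; positivity survives restriction since $E^+ = E\cap\cc B(H)^+$.

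For the completely bounded case $\vp: E\to M_n$, the key observation is that the formulas of Lemma \ref{lem:adjoint} -- namely $\psi_\vp([y_{ij}]) := \sum_{i,j}\vp(y_{ij})_{ij}$ and $\vp(x)_{ij} := \psi_\vp(e_{ij}\otimes x)$ -- make sense for arbitrary linear maps and functionals, and together define a linear bijection between $\cc L(E, M_n)$ and $(E_n)^*$ that restricts to the Lemma \ref{lem:adjoint} bijection on positive cones. The essential quantitative input is the identity $\psi_\vp([y_{ij}]) = \langle\xi,\vp_n([y_{ij}])\xi\rangle$ with $\xi := \sum_i e_i\otimes e_i\in\bb C^n\otimes\bb C^n$, from which one reads off $\|\psi_\vp\|\leq n\|\vp\|_{\cb}$; thus complete boundedness of $\vp$ transfers to ordinary boundedness of $\psi_\vp$ on the operator system $E_n$.

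Applying the already-established scalar case to $\psi_\vp$ yields $\psi_\vp = \sum_{j=1}^4 c_j\sigma_j$ with $\sigma_j$ states on $E_n$. Pulling each $\sigma_j$ back through the correspondence produces maps $\vp_j\in\cp(E, M_n)$, and linearity of the bijection immediately gives $\vp = \sum_{j=1}^4 c_j\vp_j$. The main obstacle in the whole argument is the invocation of the $C^*$-algebraic Jordan decomposition for bounded Hermitian functionals, which is classical but nontrivial and is typically cited from a standard reference rather than reproved from scratch; everything else is either routine bookkeeping or an elementary Hahn--Banach argument.
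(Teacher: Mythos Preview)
Your proposal is correct and follows essentially the same route as the paper: extend to $\cc B(H)$ via Hahn--Banach, invoke the Jordan decomposition for functionals on a C$^*$-algebra, and reduce the matrix-valued case to the scalar one through the correspondence of Lemma~\ref{lem:adjoint}. You supply more detail than the paper does---in particular the explicit verification via $\psi_\vp([y_{ij}]) = \langle\xi,\vp_n([y_{ij}])\xi\rangle$ that complete boundedness of $\vp$ implies boundedness of $\psi_\vp$---which the paper leaves implicit.
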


\begin{proof}
    Suppose $E\subset \cc B(H)$. By the Hanh-Banach theorem every bounded linear contraction $\vp: E\to \bb C$ extends to a linear contraction $\tilde\vp: \cc B(H)\to \bb C$. Now $\tilde\vp$ is a linear combination of four states by the Jordan decomposition theorem for C$^*$-algebras \cite[Proposition 2.1]{takesaki-i}. The rest of the result follows from this by essentially the same proof as Lemma \ref{lem:adjoint} by noting the correspondence described therein converts any completely bounded linear map $\vp: E\to M_n$ to a bounded linear functional $\tilde\vp: M_n(E)\to \bb C$.
\end{proof}

From this we see that $E^*$ is equipped with a matricial ordered $\ast$-vector space structure, where the positive cone in $M_n(E^*)$ is identified with the completely positive maps $E\to M_n$. This does not give $E^*$ the structure of an operator system, however, as there is no clear choice of an archimedean order unit. In the case that $E$ is a finite-dimensional operator system there is a way of placing an operator system structure on $E^*$ which is non-canonical as it will depend on choosing an order unit. This is essentially a consequence of the following lemma: see \cite[Lemma 2.5]{Kavruk2014} for a proof.

\begin{lem}
    For any finite-dimensional operator system $E$ we can find a basis $\{1=x_1,\dotsc,x_n\}$ of self-adjoint elements so that the dual basis $\{x_1^*,\dotsc,x_n^*\}$, defined by $x_i^*(x_j) = \de_{ij}$, is a hermitian basis for $E^*$ with $x_1^*$ an order unit.
\end{lem}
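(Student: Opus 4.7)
The plan is to realize $x_1^*$ as a \emph{faithful state} $\tau$ on $E$ and to take $x_2,\dots,x_n$ to be any $\R$-basis of $E^h \cap \ker\tau$. The hermitian-basis condition on the dual will then be automatic: by Remark \ref{rmk:pos-cone} one has $E = \C E^h$, so any $\C$-basis of $E$ consisting of self-adjoint elements is simultaneously an $\R$-basis of $E^h$, and the dual functionals $x_i^*$ then take the real values $\delta_{ij}$ on a basis of $E^h$ and so are hermitian.

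The substantive step is producing the faithful state. I would invoke the preceding Lemma, which says that every bounded hermitian functional on $E$ is an $\R$-linear combination of states; hence the states span the real dual $(E^h)^*_\R$. Since $n = \dim_\R E^h$ is finite, one can select $n$ states $\vp_1,\dots,\vp_n$ forming an $\R$-basis of $(E^h)^*_\R$, and set $\tau := \tfrac{1}{n}(\vp_1 + \cdots + \vp_n)$. This is a state, and if $\tau(y) = 0$ for some $y \in E^+$, then each $\vp_i(y)$ is a nonnegative real summing to zero, so all $\vp_i(y)$ vanish, whence $\psi(y) = 0$ for every hermitian $\psi \in E^*$, forcing $y = 0$.

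Having $\tau$ in hand, I would note that $\tau(1) = 1$ gives a direct-sum decomposition $E^h = \R\cdot 1 \oplus (E^h \cap \ker\tau)$; picking any $\R$-basis $x_2,\dots,x_n$ of the second summand and setting $x_1 := 1$ produces a self-adjoint basis with $x_1^* = \tau$ (both functionals are $\C$-linear, send $1 \mapsto 1$, and kill each $x_j$ for $j \geq 2$). To finish, I need $\tau$ to be an order unit in $(E^*)^h$: for every hermitian $\psi \in E^*$ I must find $r > 0$ with $r\tau \pm \psi \geq 0$ on $E^+$. Homogeneity reduces this to the compact set $K := \{y \in E^+ : \|y\| = 1\}$; faithfulness and continuity give $c := \min_{y \in K} \tau(y) > 0$, and $M := \max_{y \in K} |\psi(y)|$ is finite, so any $r \geq M/c$ works.

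The main obstacle is the second step, producing $\tau$; everything else is linear algebra and a compactness argument. Finite-dimensionality of $E$ enters twice: once to promote the spanning statement of the previous lemma to a basis of size $n$ (so that the average is a state that must be faithful), and again to guarantee compactness of $K$ in the final estimate.
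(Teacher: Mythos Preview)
Your argument is correct. The paper does not actually prove this lemma but defers to \cite[Lemma 2.5]{Kavruk2014}; your route---produce a faithful state $\tau$ by averaging a basis of states for $(E^h)^*_{\bb R}$, take $x_1^*=\tau$, and then use compactness of $\{y\in E^+ : \|y\|=1\}$ together with faithfulness to check the order-unit inequality---is the standard one and is essentially what Kavruk does.
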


A related, useful lemma appears as \cite[Lemma B.10]{BrownOzawa}.

\begin{lem} \label{lem:os-dual-basis}
    For any finite-dimensional operator system $E$, we can find a spanning set $1= x_1,\dotsc, x_n$ consisting of hermitian elements of unit norm so that the dual basis $x_1^*,\dotsc,x_n^*$ defined by $x_i^*(x_j) = \delta_{ij}$ satisfies $\|x_i^*\| =1$ as well for all $i=1,\dotsc,n$. Moreover, $x_1^*$ may be taken to be an order unit in the cone $\cc S(E)$.
\end{lem}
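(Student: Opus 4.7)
The lemma is a variant of the classical Auerbach lemma---every finite-dimensional real Banach space admits a basis whose dual basis has matching unit norms---adapted to the ordered, operator-system setting so as to incorporate the constraints $x_1=1$ and $x_1^*$ a (faithful) state. My plan is to perform the Auerbach-type determinant maximization on $E^h$ with $x_1=1$ fixed, extend the resulting real-linear dual functionals to hermitian complex-linear functionals on $E$, and then invoke Proposition \ref{prop:contraction-positive} to deduce that $x_1^*$ is a state.

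More concretely, $E^h$ is a real Banach space of dimension $n$. On the compact set $K:=\{(y_2,\dotsc,y_n)\in(E^h)^{n-1}:\|y_i\|\leq 1\}$ I would maximize the continuous function $(y_2,\dotsc,y_n)\mapsto |\det_B(1,y_2,\dotsc,y_n)|$, where $B$ is some fixed background basis of $E^h$. Letting $(x_2,\dotsc,x_n)$ be a maximizer and defining real dual functionals $f_i\in (E^h)^*$ via Cramer's rule, the standard maximality argument (compare values with $y$ in the $i$th slot in place of $x_i$) yields $\|f_i\|_{(E^h)^*}\leq 1$, and hence $\|x_i\|_E=1$, for every $i\geq 2$. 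The delicate point is to show $\|f_1\|_{(E^h)^*}=1$ as well; since $f_1(1)=1=\|1\|$, only the reverse inequality is in question, and this is where one must exploit the order-unit structure of $1$, namely that any $y\in E^h$ with $\|y\|\leq 1$ satisfies $-1\preceq y\preceq 1$.

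Granting that step, each real-linear $f_i$ extends uniquely to a hermitian complex-linear functional $x_i^*:E\to\bb C$ by setting $x_i^*(a+ib):=f_i(a)+if_i(b)$ for $a,b\in E^h$. A rotation argument of the type used in the proof of Proposition \ref{ex:contraction-positive}---given $z\in E$ with $\|z\|\leq 1$, choose $\theta$ so that $e^{i\theta}x_i^*(z)\geq 0$ and replace $z$ by $\tfrac{1}{2}(e^{i\theta}z+(e^{i\theta}z)^*)$---shows that the norm of any hermitian functional on an operator system equals the supremum of its modulus on the hermitian unit ball, whence $\|x_i^*\|_{E^*}=\|f_i\|_{(E^h)^*}=1$. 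In particular $x_1^*$ is unital, $\ast$-linear, and contractive, so by Proposition \ref{prop:contraction-positive} it is a state, and $x_1^*(x_j)=\delta_{1j}$ forces $x_2,\dotsc,x_n\in\ker x_1^*\cap E^h$.

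For the ``moreover'' clause I would observe that in finite dimensions the faithful states form a dense open subset of $\cc S(E)$ (they are the relative interior of the state space as a compact convex body in the hermitian dual), so if the $x_1^*$ produced above is not already faithful, one can convex-combine with a fixed faithful state $\tau_0$, replacing $x_1^*$ by $(1-t)x_1^*+t\tau_0$ for small $t>0$, and re-run the construction starting from the perturbed state. The main obstacle I foresee is the norm estimate $\|f_1\|_{(E^h)^*}\leq 1$: the usual Auerbach argument only controls the dual functionals attached to the \emph{varied} basis vectors, so one needs an extra argument leveraging the fact that $1$ is the order unit of $E^h$, rather than a generic fixed unit vector.
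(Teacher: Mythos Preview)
The paper does not actually prove this lemma; it simply records it with a citation to \cite[Lemma B.10]{BrownOzawa}. So there is no in-paper argument to compare against, and your task is really to produce a correct proof from scratch.

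Your Auerbach-with-$x_1=1$-fixed strategy is the natural one, but the gap you yourself flag is genuine and is \emph{not} closed by the vague appeal to ``the order-unit structure of $1$.'' The constrained determinant maximization, with $x_1=1$ held fixed and only $x_2,\dotsc,x_n$ varied, gives via Cramer's rule the bound $\|f_i\|\le 1$ only for the varied slots $i\ge 2$; it yields no information at all about $f_1$. Concretely, $f_1(y)$ is the coefficient $c_1$ in the expansion $y=c_1\cdot 1+\sum_{i\ge 2}c_i x_i$, and the inequality $-1\preceq y\preceq 1$ (which is all the order-unit property gives you for $\|y\|\le 1$) does not bound $c_1$, since $c_1$ depends on the particular complement $\spn\{x_2,\dotsc,x_n\}$ rather than on the order structure. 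Without $\|x_1^*\|\le 1$ you cannot invoke Proposition~\ref{prop:contraction-positive} to conclude that $x_1^*$ is a state, so the argument stalls precisely where you say it does. You would need either a different optimization (for instance, running Auerbach on a suitable quotient or dual so that the state is built in from the start) or an independent argument pinning down $f_1$.

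The ``moreover'' paragraph has a separate problem. Your construction does not take a state as input---it is supposed to \emph{produce} $x_1^*$---so ``convex-combine $x_1^*$ with a faithful state $\tau_0$ and re-run the construction'' is circular as written. If instead you mean to replace $x_1^*$ by $(1-t)x_1^*+t\tau_0$ \emph{after} the fact, this destroys the duality relations $x_1^*(x_j)=\delta_{1j}$ unless you simultaneously move $x_2,\dotsc,x_n$ into $\ker\bigl((1-t)x_1^*+t\tau_0\bigr)$, and that adjustment will in general change their norms. A cleaner route to the ``moreover'' clause is to fix a faithful state $\tau$ \emph{first} and arrange the entire construction so that $x_1^*=\tau$ from the outset; but that again requires resolving the main gap above.
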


Even though $E^*$ is not an operator system, $E^{**}$ is, with the matricial order structure obtained by dualizing twice being given by the weak*-closures of the positive cones $E_n^+$ under the evaluation map $\hat{\cdot}$. The tricky part is checking that $\hat 1: E^*\to \bb C$ is an \emph{archimedean} order unit for this matricial ordered structure on $E^{**}$. 
In fact, the following is true.

\begin{prop} \label{prop:E-dual-dual}
    The natural embedding $\hat{\cdot}: E\hookrightarrow E^{**}$ given by sending $x\mapsto \hat x$ is a complete order embedding of operator systems. If $E$ is finite-dimensional, then it is a complete order isomorphism.
\end{prop}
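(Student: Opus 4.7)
The plan is to verify that $(E^{**}, (M_n(E^{**})^+)_n, \hat 1)$ is an abstract operator system in the sense of Definition~\ref{defn:abstract-os}, then to check $\hat{\cdot}\colon E \to E^{**}$ is a complete order embedding, and finally to obtain surjectivity in the finite-dimensional case from reflexivity. The $\ast$-structure on $E$ extends canonically to a $w^*$-continuous involution on $E^{**}$ via $y^*(\phi) := \overline{y(\phi^*)}$, where $\phi^*(x) := \overline{\phi(x^*)}$. Using the identification $M_n(E)^{**} \cong M_n(E^{**})$ (valid because $M_n$ is finite-dimensional), the cones $M_n(E^{**})^+$ are by construction $w^*$-closed, and they are stable under $y \mapsto a^*ya$ for $a \in M_{n,k}$ since that operation is $w^*$-continuous (being the bidual of a bounded map) and the analogous stability holds at the level of $M_n(E)$.

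The main work is to show that $\hat 1_n$ is an archimedean order unit for each $n$. For the order-unit property, take $y \in M_n(E^{**})^h$; Goldstine's theorem applied to the Banach space $M_n(E)$ yields a net $x_\lambda \in M_n(E)$ with $\|x_\lambda\|_n \le \|y\|$ and $\widehat{x_\lambda} \to y$ in the $w^*$-topology, and after replacing $x_\lambda$ by $(x_\lambda + x_\lambda^*)/2$ (using $w^*$-continuity of the involution on $E^{**}$) each $x_\lambda$ may be assumed hermitian. By Proposition~\ref{prop:order-norm} (or directly in $\cc B(H^{\oplus n})$), $\|y\| \cdot 1_n \pm x_\lambda \in M_n(E)^+$, so passing to the $w^*$-limit gives $\|y\| \hat 1_n \pm y \in M_n(E^{**})^+$. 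For the archimedean property, suppose $-\eps \hat 1_n \preceq y \preceq \eps \hat 1_n$ for every $\eps>0$. Any state $\psi$ on $M_n(E)$ extends canonically to a $w^*$-continuous functional on $M_n(E)^{**} = M_n(E^{**})$, which remains nonnegative on $M_n(E^{**})^+$ by $w^*$-closure of the cone. Hence $|\psi(y)| \le \eps\,\psi(1_n)$ for all $\eps>0$, giving $\psi(y) = 0$. Since every bounded functional on $M_n(E)$ is a linear combination of states (by the lemma just above), $y = 0$.

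It remains to see that $\hat{\cdot}$ is a complete order embedding. Complete positivity is immediate from the definition of the cones. Conversely, if $x \in M_n(E)$ and $\hat x \in M_n(E^{**})^+$, the restriction of the $w^*$-topology on $M_n(E^{**})$ to $\widehat{M_n(E)}$ is the weak topology on $M_n(E)$, so $x$ lies in the weak closure of the convex cone $M_n(E)^+$; by Mazur's theorem this coincides with the norm closure, which is $M_n(E)^+$ itself. In the finite-dimensional case $E$ is reflexive, so $\hat{\cdot}$ is a Banach-space isomorphism; being moreover a complete order embedding it is a complete order isomorphism. The chief obstacle, as flagged in the introductory paragraph to the proposition, is the archimedean property: one must convert the statement that $y$ is order-dominated by every positive multiple of $\hat 1_n$ into a vanishing conclusion, and the mechanism is the Jordan-type decomposition of bounded functionals on $M_n(E)$ into positive ones, which is exactly what the lemma just above supplies.
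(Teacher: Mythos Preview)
Your proof is correct and takes a genuinely different route from the paper's. The paper establishes the archimedean property of $\hat 1$ by embedding $E\subset\cc B(H)$, invoking the universal representation to identify $\cc B(H)^{**}$ with a von~Neumann algebra on some $\cc B(K)$, and then reading off that $\hat 1$ is archimedean because C$^*$-algebras are operator systems; the author even remarks parenthetically that no proof avoiding this reasoning is known to them. You instead argue directly: if $-\eps\hat 1_n\preceq y\preceq \eps\hat 1_n$ for all $\eps$, then evaluation at any state of $M_n(E)$ (a $w^*$-continuous functional nonnegative on the $w^*$-closed cone) kills $y$, and since states span $M_n(E)^*$ by the Jordan decomposition lemma just above, $y=0$. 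This sidesteps the enveloping von~Neumann algebra entirely, though it still leans on the concrete representation $E\subset\cc B(H)$ indirectly, since that is how the paper proves the Jordan decomposition lemma. What your approach buys is a lighter toolkit (no universal representation, no $\sigma$-weak topology on $\cc B(K)$); what the paper's approach buys is a structural statement---that $E^{**}$ sits inside an honest C$^*$-algebra $\cc B(H)^{**}$ as an operator subsystem---which is itself useful later (e.g.\ in the proof of Lemma~\ref{lem:wep-finitary}). Your Goldstine/Mazur arguments for the order-unit property and for the complete order embedding are also correct and more explicit than the paper, which absorbs these into the single observation that $E^{**}\hookrightarrow\cc B(H)^{**}$ is a complete order embedding of matrix-ordered spaces.
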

 
\begin{proof}
    We begin by showing that $\cc B(H)^{**}$ has the structure of a unital C$^*$-algebra whose positive cone agrees with that of the induced matricial order structure and whose unit is $\hat 1$. It follows that $\hat 1$ is an archimedean order unit in $\cc B(H)^{**}$ since C$^\ast$-algebras are operator systems. (The author is not aware of any proof of this proposition which avoids this reasoning.)  
    Indeed, by \cite[Section III.2]{takesaki-i} there is a unital $\ast$-algebra embedding $\pi: \cc B(H)\to \cc B(K)$ so that there is a unique extension $\pi': \cc B(H)^{**}\to \cc B(K)$ which is a homeomorphism from the weak*-topology on $\cc B(H)^{**}$ to the ultraweak topology on $\cc B(K)$. This implies that $x\in M_n(\cc B(H)^{**})$ is positive in the matricial order structure defined on $\cc B(H)^{**}$ if and only if $\pi'(x)$ is positive in $M_n(\cc B(K))$ as both are determined from taking the closures of $M_n(\cc B(H))^+$ in the respective weak topologies.

    Now, we have that $E\subset \cc B(H)$ for some Hilbert space $H$, and it is straightforward to see that $E^{**}\hookrightarrow \cc B(H)^{**}$ is a complete order embedding at the level of the induced matricial ordered structures. Since $1 = \pi(1)\in \cc B(K)$ is archimedean, we have that $\hat 1\in E^{**}$ is archimedean, thus $E^{**}$ is an operator system.
\end{proof}

\begin{defn}
    For an operator system $E$, a subspace $J\subset E$ is said to be a \emph{kernel} if $J = \bigcap_{i\in I} \ker(\vp_i)$ where $\{\vp_i : i\in I\}$ is a collection of states on $E$.
\end{defn}

Given a kernel $J\subset E$, we may put a \emph{quotient operator system} structure on $\widehat{E/J}$ for the ordered $\ast$-vector space quotient $E/J$ by identifying $\ucp(E/J,M_n)$ with the weak$^*$-closed, convex subset $\ucp(E,M_n; J)$ of $\ucp(E,M_n)$ consisting of all $\vp$ with $J\subset \ker(\vp)$. Note that in general the quotient structure must be placed on $\widehat{E/J}$ and not $E/J$ itself as the class of $1$ in $E/J$ is an order unit which is possibly not archimedean, so we must pass through the evaluation map to obtain the archimedeanization. Quotients of operator systems were first defined by Choi and Effros \cite{Choi1977}. We refer the reader to \cite{Kavruk2014, kptt2013} for a in-depth treatment of quotients of operator systems.

\subsection{Examples of Operator Systems}

\begin{example}
    Every unital C$^*$-algebra $A$ is canonically an operator system where $A_n^+$ is just the cone of positive elements in the C$^*$-algebra $M_n(A)$. In particular the matrix algebra $M_k$ is naturally an operator system under $(M_k)_n^+ = (M_n\otimes M_k)^+$. Note that not every element of $(M_n\otimes M_k)^+$ can be written as a sum of simple tensors $x\otimes y$ with $x\in M_n^+$ and $y\in M_k^+$ as soon as $n,k\geq 2$! In quantum information theory the elements in $(M_n\otimes M_k)^+$ which are not in the span of simple tensors of positive elements are said to be \emph{entangled}.
\end{example}

\begin{example}
    Let $X$ be a finite set. For a vector space $V$, let $V[X]$ denote the vector space of all functions $f: X\to V$. We have that $\bb C[X]$ is a real, ordered $\ast$-vector space under pointwise conjugation and the cone $\bb R_{\geq 0}[X]$ of all non-negatively-valued functions $f: X\to \bb R_{\geq 0}$, with order unit being the characteristic function $1_X$.
     
    We can define the following ``free'' canonical operator system structure on $\bb C[X]$ as follows. We have that $M_n(\bb C[X]) \cong M_n[X]$ canonically, so we can define $\bb C[X]_n^+$ as the cone of all functions $f: X\to M_n^+$. In the dual picture, consider a map $g: X\to M_k^+$, which extends to a positive linear map $\tilde g: \bb C[X]\to M_k$. We see that $\tilde g_n: M_n[X]\to M_n(M_k)\cong M_n\otimes M_k$ is given by $\tilde g_n(f) = \sum_{x\in X} f(x)\otimes g(x)$; hence, every positive map from $\bb C[X]$ to $M_k$ is completely positive.  On the other hand, consider a positive map $g: X\to M_k^+$. For $f\in M_n[X]$ we have the pairing $\sum_{i,j} \tilde g_{ij}(f_{ij})$ as given in the proof of Proposition \ref{prop:dual-os} can be computed as
    \[ \sum_{i,j} \tilde g_{ij}(f_{ij}) =\sum_{x\in X} \tr(f(x)g(x)).\]
    Since $A\in M_n$ is positive semidefinite if and only if $\tr(AB)\geq 0$ for all $B$ positive semidefinite, it follows that $\sum_{i,j} \tilde g_{ij}(f_{ij})\geq 0$ for all $g: X\to M_n^+$ if and only if $f(x)\in M_n^+$ for all $x\in X$. These arguments are seen to apply equally well replacing $M_n$ with an arbitrary direct product of matrix algebras as the direct sum is dense in the strong operator topology; thus, the reasoning applies to \emph{all} operator systems by Remark \ref{rmk:op-subsys-matrix-product}.
    
    In summary, we see that this operator system structure on $\bb C[X]$ has the following two universal properties: for every operator system $E$ every positive map $\vp: \bb C[X]\to E$ is completely positive and every function $f:X\to E^+$ induces a completely positive map $\tilde f: \bb C[X]\to E$. Let us also observe that a map $\psi: E\to \bb C[X]$ is positive if and only if $\de_x\circ \psi: E\to \bb C$ is positive for all $x\in X$, where $\de_x$ is the point evaluation map. Thus for any operator system $E$ any positive map $\psi: E\to \bb C[X]$ is completely positive.
\end{example}

\begin{example} \label{ex:linear-quotient}
    We now describe the construction of universal operator systems from sets of linear relations. We equip $\bb C[X]$ with the usual inner product structure $\ip{f}{g} := \sum_{x\in X} f(x)\bar g(x)$. Consider a set of elements $r_1,\dotsc,r_k\in \bb R[X]$ so that $\ip{r_i}{1_X} = \sum_x r_i(x) =0$ for all $i=1,\dotsc,k$. Let $R = \spn\{r_1,\dotsc,r_k\}$. We define an operator system structure on the quotient space $\bb C[X]/R$ by defining the completely positive maps from $\bb C[X]/R$ to $M_n$ to be exactly the set of linear maps induced by functions $f: X\to M_n^+$ so that $\sum_{x\in X} r_i(x)f(x) = 0$ for all $i=1,\dotsc,k$. Since $1_X\in R^\perp$ we have that the constant function $f(x)\equiv \frac{1}{|X|} I_n$ is a unital, completely positive map, so that $\ucp(\bb C[X]/R, M_n)$ is nonempty for each $n$.
    
    This is a special case of the quotienting construction detailed above, as it is easy to check that $R\subset \bb C[X]$ is a kernel. This construction seems rather simple, but this belies a very rich structure. For example, let us take $|X|= 2n$ and label the elements of $X$ as $s_1,\dotsc, s_n,t_1,\dotsc,t_n$. Consider the single element $r$ defined by $r(s_i)=1$ and $r(t_j)=-1$ for all $i,j=1,\dotsc,n$. It is a result of Kavruk \cite{Kavruk2015} that in this case $\bb C[X]/R$ is canonically completely order isomorphic to the operator subsystem of $C^*(\bb Z/n\bb Z\ast \bb Z/n\bb Z)$ spanned by the generating set $(\bb Z/n\bb Z\ast 1)\cup (1\ast\bb Z/n\bb Z)$.
\end{example}

\begin{example}
    Let $P\subset \bb R^k$ be a closed cone with archimedean order unit $e$.  We can define an operator system structure on $V  = (\bb C^k, P, e)$ by declaring $\ucp(V:M_n)$ to be the set of all linear maps $f: \bb C^k \to M_n$ so that $f(\bb R^k)\subset M_n^h$, $f(e)=I_n$, and $f(p)\in M_n^+$ for all $p\in P$. This is the \emph{maximal operator system structure} $\MAX(V)$ over $V$ as defined in \cite{ptt2011}: the following is proved therein.
\end{example}
    
\begin{prop}
    The operator system $\MAX(V)$ is characterized by the universal property that $\vp: \MAX(V)\to \cc B(H)$ is completely positive if and only if $\vp(P)\subset \cc B(H)^+$.
\end{prop}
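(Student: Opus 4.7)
The plan is first to identify the positive cone $M_n(\MAX(V))^+$ concretely as the closed algebraic tensor cone
\[C_n := \overline{\Bigl\{\sum_l A_l \otimes p_l : A_l \in M_n^+,\, p_l \in P,\text{ finite sum}\Bigr\}} \subset M_n(V),\]
and then to read the universal property off of this description.

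To identify $M_n(\MAX(V))^+ = C_n$, I would verify that $(V, \{C_n\}, I_n \otimes e)$ satisfies the axioms of an abstract operator system: closure of $C_n$ under compressions is immediate from $a^*(A \otimes p) a = (a^* A a) \otimes p \in C_k$ for $a \in M_{n,k}$, and $I_n \otimes e$ is an archimedean order unit at level $n$ because $I_n$ is an archimedean order unit for $M_n^+$ and $e$ is one for $P$, with closedness of $C_n$ giving the archimedean property at the level of the tensor cone. Next, I would check that the UCP maps $V \to M_n$ in the $C_n$-structure coincide with the given family $\cc S_n$: for any unital $f: V \to M_n$ with $f(P) \subset M_n^+$, the identity $f_m(A \otimes p) = A \otimes f(p) \in M_m^+ \otimes M_n^+ \subset M_m(M_n)^+$, together with linearity and continuity, forces $f_m(C_m) \subset M_m(M_n)^+$; conversely any UCP map is $1$-positive and hence sends $V^+ = C_1 = P$ into $M_n^+$. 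By the uniqueness part of Proposition \ref{prop:dual-os}, the $C_n$-structure coincides with $\MAX(V)$.

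Granted this, the universal property is nearly free. The forward direction is immediate: a completely positive $\vp$ is $1$-positive and $V^+ = P$, so $\vp(P) \subset \cc B(H)^+$. For the converse, assume $\vp(P) \subset \cc B(H)^+$ and let $X \in M_n(\MAX(V))^+ = C_n$. Approximating $X$ in norm by finite sums $X_\de = \sum_l A_l^{(\de)} \otimes p_l^{(\de)}$ with $A_l^{(\de)} \in M_n^+$ and $p_l^{(\de)} \in P$, one has
\[\vp_n(X_\de) = \sum_l A_l^{(\de)} \otimes \vp\bigl(p_l^{(\de)}\bigr),\]
and each summand $A \otimes \vp(p)$ lies in $M_n(\cc B(H))^+$, being the square of the self-adjoint element $A^{1/2} \otimes \vp(p)^{1/2}$. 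Continuity of $\vp_n$ (automatic since $V$ is finite-dimensional) and closedness of the positive cone in $M_n(\cc B(H))$ then give $\vp_n(X) \geq 0$, so $\vp$ is $n$-positive for every $n$.

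The main obstacle I anticipate is the cone identification, and specifically the verification that $\ucp(V_C, M_n) = \cc S_n$ where $V_C$ denotes $V$ equipped with the cones $C_n$: this is the bridge linking the concrete tensor-cone description of $C_n$ with the abstract definition of $\MAX(V)$ via Proposition \ref{prop:dual-os}. Once that bridge is in place, the universal property falls out formally from the factorization $\vp_n(A \otimes p) = A \otimes \vp(p)$ and the fact that tensor products of positives are positive.
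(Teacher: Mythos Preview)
The paper does not give its own proof of this proposition; it simply records that the result is proved in \cite{ptt2011}. Your approach---identifying $M_n(\MAX(V))^+$ with the closed tensor cone $C_n$ and then reading off the universal property from $\vp_n(A\otimes p)=A\otimes\vp(p)$---is exactly the route taken there, so in that sense you are reproducing the standard argument rather than finding an alternative.

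There is one genuine soft spot. You invoke ``the uniqueness part of Proposition~\ref{prop:dual-os},'' but that proposition has no uniqueness clause: it constructs an operator system structure from a family $(\cc S_n)$, it does not assert that any structure with $\ucp(\,\cdot\,,M_n)=\cc S_n$ must coincide with it. What you actually need is the fact that an operator system is completely determined by its UCP maps into matrix algebras, and this is a consequence of the Choi--Effros machinery (Lemmas~\ref{lem:state-positive}--\ref{lem:adjoint}): the cone $M_n(E)^+$ is recovered as the set of $X$ on which every state of $M_n(E)$ is nonnegative, and such states correspond bijectively to CP maps $E\to M_n$, which in turn are built from UCP maps via Lemma~\ref{lem:cp-to-ucp}. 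Once you cite this, your verification that $\ucp(V_C,M_n)=\cc S_n$ really does force $C_n=M_n(\MAX(V))^+$, and the rest of your argument goes through. Contrary to what your last paragraph suggests, the UCP identification itself is not the obstacle---your computation $f_m(A\otimes p)=A\otimes f(p)$ handles it cleanly---it is this bridge back to the paper's definition of $\MAX(V)$ that needs the extra sentence.

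A smaller point: the claim that $I_n\otimes e$ is an order unit for $C_n$ is not automatic from ``$I_n$ and $e$ are each order units''; you should write an arbitrary hermitian $X\in M_n(V)^h$ as a difference of elements of the algebraic tensor cone using $M_n^h=M_n^+-M_n^+$ and $V^h=P-P$, then dominate the negative part by a multiple of $I_n\otimes e$. The archimedean half, as you note, follows immediately from closedness of $C_n$.
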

    
We record this property by introducing new terminology.
    
\begin{defn} \label{defn:maximal}
    We say that an operator system $E$ is \emph{$k$-maximal} if every $k$-positive map $\vp: E\to \cc B(H)$ is completely positive. For ease of reference, we will say that $E$ is \emph{maximal} if it is $1$-maximal.
\end{defn}
    
\begin{remark} \label{rmk:cone}
    If $K\subset \bb R^\ell$ is a compact, convex set with $0$ as an interior point, we can form the cone $P_K\subset \bb R^{\ell+1}$ to consist of all vectors of the form $te - tv$ where $e = (1,0,\dotsc, 0)$, $v\in 0\oplus K$, and $t\geq 0$. It can be checked that $e$ is an archimedean order unit for $P_K$ and $P_K- P_K = \bb R^{\ell+1}$. 
    
    \begin{prop}
        For the maximal operator system structure on $V_K := (\bb C^{\ell+1}, P_K, e)$, we have that $\ucp(V_K,M_n)$ is identified with the set of all $\ell$-tuples $X_1,\dotsc,X_\ell$ of hermitian elements in $M_n$ satisfying the \emph{linear matrix inequalities} $I_n\succeq v_1X_1 + \dotsc+ v_\ell X_\ell$ for all $v\in K$.
    \end{prop}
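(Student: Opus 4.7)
The plan is to unpack the maximal operator system structure definition, reduce the UCP characterization to a positivity condition by the universal property of $\MAX$, parametrize unital linear maps by the images of the last $\ell$ coordinates, and then use the generation of $P_K$ by ``one-parameter'' rays to rewrite the positivity condition as the stated LMI.

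First, I would recall from the definition of $\MAX(V_K)$ that $\vp \in \ucp(V_K, M_n)$ if and only if $\vp: \bb C^{\ell+1} \to M_n$ is linear, $\vp(e)=I_n$, $\vp(\bb R^{\ell+1}) \subset M_n^h$, and $\vp(P_K) \subset M_n^+$. Complete positivity is automatic here because $\MAX(V_K)$ is characterized by the universal property in Proposition (the one just above Definition \ref{defn:maximal}): any positive map into $\cc B(H)$ from $\MAX(V_K)$ is automatically completely positive. So the problem reduces to classifying unital, $\ast$-linear, positive (on $P_K$) maps.

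Next, write $\vp(e) = I_n$ and $X_i := \vp(e_i)$ for $i = 1, \dotsc, \ell$, where $e_i$ are the standard basis vectors of $\bb C^{\ell+1}$ (so $e = e_0$). The $\ast$-linearity condition (sending $\bb R^{\ell+1}$ into hermitians) is equivalent to each $X_i$ being hermitian, since the $e_i$ form a real basis. This gives a bijection between unital $\ast$-linear maps $\vp: V_K \to M_n$ and $\ell$-tuples of hermitian matrices $(X_1, \dotsc, X_\ell)$ in $M_n$.

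Finally, I would translate the cone positivity condition into the LMI. By construction $P_K = \{t(e - (0\oplus w)) : t \geq 0, w \in K\}$, so $\vp(P_K) \subset M_n^+$ if and only if $\vp(e - (0\oplus w)) = I_n - \sum_{i=1}^\ell w_i X_i \succeq 0$ for every $w \in K$, which is exactly the linear matrix inequality $I_n \succeq v_1 X_1 + \dotsc + v_\ell X_\ell$ for all $v \in K$. Conversely, given any hermitian tuple $(X_1, \dotsc, X_\ell)$ satisfying these LMIs, the associated unital $\ast$-linear map sends each generator $e - (0 \oplus w)$ of $P_K$ into $M_n^+$, hence sends all of $P_K$ into $M_n^+$ by conicity.

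There is no real obstacle; the only thing to be slightly careful about is that $e$ really is an archimedean order unit and $P_K - P_K = \bb R^{\ell+1}$, both of which are asserted in Remark \ref{rmk:cone} and guarantee that the above parametrization respects the abstract operator system axioms (so that $\MAX(V_K)$ is genuinely defined). Everything else is pure bookkeeping with the definitions.
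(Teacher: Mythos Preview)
Your proof is correct and complete; the paper itself leaves this proposition as an exercise to the reader, so there is no proof to compare against. Your argument is exactly the intended one: parametrize unital $\ast$-linear maps by the hermitian tuple $(X_1,\dotsc,X_\ell)$ of images of the non-unit basis vectors, and then observe that positivity on the generating rays $t(e-(0\oplus w))$ of $P_K$ is precisely the LMI $I_n\succeq \sum_i w_iX_i$.
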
 
    
    We leave the proof as an exercise to the reader. Conversely, for a closed cone $P$ with archimedean order unit $e$ belonging to the interior, consider the ``slice'' $K = \{p\in P: \ip{p}{e} =1\}$. It can be shown that $P_K$ is affinely isomorphic to $P$, so that all maximal operator systems essentially arise in this way. This is essentially a special case of the Webster--Winkler Duality Theorem \cite{webster-winkler}.
\end{remark}

\begin{example}
    Again, let $P\subset \bb R^k$ be a closed cone with archimedean order unit $e$.  We can define an operator system structure on $V  = (\bb C^n, P, e)$ by embedding $V$ via the evaluation map into the algebra $C(\cc S(V))$ of continuous functions on the set $\cc S(V)$ of all states, which is a closed and bounded, hence compact, subset of $\bb C^n$. This is the \emph{minimal operator system structure} $\MIN(V)$ defined on $V$ as described in \cite{ptt2011}. The following is proved therein.
    
    \begin{prop}
        The minimal operator system structure on $V$ is characterized by the universal property that for every operator system $E$ every positive map $\vp: E\to V$ extends to a completely positive map $\vp: E\to \MIN(V)$.
    \end{prop}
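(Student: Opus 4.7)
The plan is to exploit that $\MIN(V)$ is by construction a unital subsystem of the commutative C$^*$-algebra $C(\cc S(V))$, combined with the classical fact that positive maps into abelian C$^*$-algebras are automatically completely positive. First I would verify that the evaluation embedding $V \hookrightarrow C(\cc S(V))$, $v \mapsto \hat v$, is a genuine order embedding: injectivity is equivalent to the archimedean property of $e$ (see Remark \ref{rem:archimedean}), while order-faithfulness on the positive cone is exactly Lemma \ref{lem:state-positive}. This equips $\MIN(V)$ with the concrete structure of an operator system sitting inside $C(\cc S(V))$.

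The key technical step is the following lemma: for any compact Hausdorff space $K$ and any operator system $E$, every positive linear map $\psi: E \to C(K)$ is completely positive. Under the natural identification $M_n(C(K)) \cong C(K, M_n)$, a matrix-valued function is positive precisely when it is pointwise positive, so it suffices to verify $\psi_n(x)(k) \in M_n^+$ for each $k \in K$ and each $x \in E_n^+$. Writing $\psi_n(x)(k) = (\mathrm{ev}_k\circ\psi)_n(x)$ and observing that $\mathrm{ev}_k\circ\psi: E\to \bb C$ is a positive linear functional, Lemma \ref{lem:state-cp} applies to give complete positivity of $\mathrm{ev}_k\circ\psi$, which yields the desired pointwise matrix positivity.

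Granting this lemma, the universal property is immediate: given a positive $\vp: E\to V$, composing with $V\hookrightarrow C(\cc S(V))$ produces a positive map $E\to C(\cc S(V))$, which is completely positive by the lemma; since the operator system structure on $\MIN(V)$ is exactly the one induced from $C(\cc S(V))$, this says precisely that $\vp: E\to \MIN(V)$ is completely positive. For the characterization part, suppose $\tilde V$ is another operator system structure on the same underlying ordered $\ast$-vector space $V$ satisfying the same universal property. Then $\mathrm{id}: \MIN(V) \to \tilde V$ is positive, hence completely positive by the universal property of $\tilde V$; and $\mathrm{id}: \tilde V \to \MIN(V)$ is positive, hence completely positive by the property just established for $\MIN(V)$. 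Thus $\mathrm{id}$ is a complete order isomorphism, proving uniqueness. The only step requiring any real care is the verification that the evaluation embedding is order-faithful, which comes down to the archimedean condition on $e$; the rest is a fairly mechanical application of the pointwise nature of positivity in $C(K)$-valued matrices together with Lemma \ref{lem:state-cp}.
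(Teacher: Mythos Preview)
Your argument is correct and is essentially the standard proof of this fact. The paper itself does not supply a proof here; it only cites \cite{ptt2011} with ``The following is proved therein.'' So there is no in-paper proof to compare against, but your approach---embed $\MIN(V)$ into $C(\cc S(V))$, reduce to showing that positive maps into a commutative C$^*$-algebra are automatically completely positive via pointwise evaluation and Lemma~\ref{lem:state-cp}, then run the identity-map argument for uniqueness---is exactly the route taken in the cited reference. One cosmetic remark: since $V$ is assumed from the outset to have an archimedean order unit, the injectivity of the evaluation embedding is already given rather than something you need to verify separately; otherwise the write-up is clean.
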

\end{example}

\section{Model Theory of Operator Systems}

A linear matrix $\ast$-polynomial in $n$-variables $x =(x_1,\dotsc,x_n)$ is an expression of the form
\begin{equation*}
   p(x) = a_1\otimes x_1 + \dotsb + a_n\otimes x_n + b_1\otimes x_1^* + \dotsb + b_n\otimes x_n^* + c\otimes 1
\end{equation*}
where $(a_1,\dotsc,a_n,b_1,\dotsc,b_n,c)$ are elements in $M_k$ for some $k$. Equivalently, we may think of $p(x)$ as a $k\times k$ matrix where each entry is a linear $\ast$-polynomial in $1,x_1,\dotsc,x_n$.

\begin{defn}
    We say that a linear matrix $\ast$-polynomial is \emph{hermitian} if $p(x^*)^* = p(x)$ which is to say that $c$ is hermitian and $a_i^* = b_i$ for all $i=1,\dotsc,n$. A linear matrix $\ast$-polynomial is said to be \emph{homogeneous} if $p(0) = 0$, that is, if $c=0$. 
    We say that $p(x)$ has \emph{degree} $k$ if $a_1,\dotsc,a_n,b_1,\dotsc,b_n,c\in M_k$, and we write $\deg(p) = k$.
\end{defn}

We define a \emph{linear matrix inequality} to be an expression of the form $p(x)\succeq 0$ where $p(x)$ is a hermitian linear matrix $\ast$-polynomial. Note that given a system $p_1(x)\succeq 0,\dotsc, p_m(x)\succeq 0$ of linear matrix inequalities we may without loss of generality combine them into a single linear linear matrix inequality via a block diagonal embedding of the matrix coefficients of each $p_i$ into a larger matrix. Many problems in functional analysis can be essentially phrased as the following general problem:

\begin{problem}[Matrix Completion Problem]
    Does there exist $X = (X_1,\dotsc, X_n)\in \cc D^n$ for some domain $\cc D\subset \cc B(H)$ so that $p(X)\succeq 0$? (Here $1$ is interpreted as the unit in $\cc B(H)$.)
\end{problem}

From the perspective of logic, the Matrix Completion Problem can be seen as whether some existential sentence is true over some (hopefully) suitable domain of quantification $\cc D$. Notice that by Proposition \ref{prop:order-norm}, this framework captures problems involving norm estimates as well: if we are interested in the statement
\[\exists{x\in \cc D^n} : \|p(x)\|\leq 1,\] then setting $p^*(x) := p(x^*)^*$ we may write this equivalently as
\[\exists{x\in \cc D^n} : \begin{bmatrix} 1 & p(x)\\ p^*(x) & 1\end{bmatrix}\succeq 0.\]
If $H$ is a finite-dimensional Hilbert space, the Matrix Completion Problem frequently occurs in the context of \emph{semidefinite programming} in terms of whether a given semidefinite program is feasible. (We will not go any further into this here, but we refer the reader to \cite{GartnerMatousek2012, Lovasz2003} for an introduction to modern developments in the theory of semidefinite programming.) 

\subsection{Building the Language} 

One could say that the broad goal of building a model theory for matrix operator systems is to devise a general framework where problems such as the Matrix Completion Problem can be systematically studied.
Following \cite[Appendix B]{gs-kirchberg} we give a description of how to axiomatize operator systems in the context of first order continuous logic for metric structures. (We refer the reader Hart's article in this volume or to \cite[Section 2.1]{model-c-star} for the basics of first order continuous logic for metric structures.) In terms of building the language, this immediately brings to attention the following considerations.

\begin{enumerate}
    \item We will need a collection of sorts $E_1, E_2, E_3, \dotsc$ intended to capture the operator system $E = E_1$ as well as its matrix amplifications so that $E_n$ should be interpreted as $M_n(E)$ when our work is done. Each of these sorts will need domains of quantification $\cc D_r(E_n)$ corresponding to the norm $r$-balls about the origin. Since we are about to give each $E_n$ a real vector space structure, it will suffice to consider $\cc D_1(E_n)$ alone.  
    
    \item We will need another collection of sorts $C_1,C_2,C_3,\dotsc$ for which each $C_n$ will need to be interpreted as the cone of positive elements $E_n^+$ in $E_n$ with domains of quantification to be interpreted as the restriction of the $r$-balls in $E_n$ to $C_n$.
    
    \item Sorts $M_{n,k}$ for the complex $n\times k$ matrices with each domains of quantification the operator norm $r$-balls.
    
    \item Sorts for $\bb C$, $\bb R$, and the non-negative reals $\bb R_{\geq 0}$ with the domains of quantification being the standard $r$-balls.
\end{enumerate}

For each of these sorts we will need the corresponding relational symbols.

\begin{enumerate}
    \item We will need constant symbols $0,1$ in $\cc D_1(E_1)$ for $0$ and the order unit. We require function symbols
    \[\cdot_n: \bb C\times E_n\to E_n,\ +_n: E_n\times E_n\to E_n,\ \textup{and}\ \ast_n: E_n\to E_n\]
    to be used for scalar multiplication, addition, and involution, respectively. Additionally one further set of function symbols
    \[f_{n,k}: M_{n,k} \times E_n \to E_k\] is needed for $(a,x)\mapsto a^*xa$ and another
    \[p_n^{ij}: E_n\to E_1\] for projections giving the matrix coordinates. There are predicate symbols
    \[\|\cdot\|_n: E_n\to \bb R_{\geq 0}\] which we will want to interpret as the norms induced from the matrix ordering as in Proposition \ref{prop:order-norm}.
    
    \item We define constant symbols $0',1'$ in $\cc D_1(C_1)$ and function symbols
    \[\cdot_n': \bb R_{\geq 0}\times C_n\to C_n,\ +_n': C_n\times C_n\to C_n,\ \textup{and}\ g_{n,k}: M_{n,k}\times C_n\to C_k\]
    to be used for scalar multiplication, addition, and $(a,x)\mapsto a^*xa$, respectively.
    
    \item Similarly to the first two items, there will need to be a constant symbol $0$ and function symbols for scalar multiplication, addition, and taking adjoints $\ast: M_{n,k}\to M_{k,n}$ for each $M_{n,k}$.
    
    \item Again, we must define symbols for $0,1$ in $\bb R_{\geq 0}$, $\bb R$, and $\bb C$ with an additional symbol $i$ in $\bb C$, all in the unit domain of quantification. Function symbols are needed for addition, scalar multiplication, and, in the case of $\bb C$, conjugation.
    
    \item Besides these we need a couple of other sets of function symbols
    \[i_n: C_n\to E_n,\] obviously for the inclusion of the positive cone in each $E_n$ and
    \[h_n: E_n\to E_{2n}\] which we want to interpret as 
    \[h_n: x\mapsto \begin{bmatrix} 1_n & x\\ x^* & 1_n\end{bmatrix}\]
    to be used to guarantee our predicate symbols are interpreted as intended.
\end{enumerate}

Pertaining to the use of the $h_n$'s we require the following result.

\begin{lem} \label{lem:dist-to-positives}
    For an operator system $E$ and $x\in E_n$ we have that 
    \[ \|x\|_n \dminus 1 = \dist\left(\begin{bmatrix} 1 & x\\ x^* & 1\end{bmatrix}, E_{2n}^+\right).\]
    Here $x\dminus y := \max\{x-y,0\}$.
\end{lem}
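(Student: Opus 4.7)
The plan is to unpack Proposition \ref{prop:order-norm} directly. Write $M := \begin{bmatrix} 1 & x\\ x^* & 1\end{bmatrix}$ and, for $t \geq 0$, set $N_t := \begin{bmatrix} t\cdot 1 & x\\ x^* & t\cdot 1\end{bmatrix}$, so that $N_t - M = (t-1)\cdot 1_{2n}$ has norm $|t-1|$. The one auxiliary fact I will use is that, in any operator system, a hermitian element $y$ satisfies $\|y\| \leq s$ if and only if $-s\cdot 1 \preceq y \preceq s\cdot 1$; that is, the operator norm restricted to the hermitian part agrees with the archimedean order-unit norm (cf.\ Remark \ref{rmk:pos-cone}).

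For the upper bound on the distance, take $t := \max\{\|x\|_n, 1\}$. By Proposition \ref{prop:order-norm} together with the closedness of $E_{2n}^+$, we have $N_t \in E_{2n}^+$, and $\|N_t - M\|_{2n} = t - 1 = \|x\|_n \dminus 1$. Hence $\dist(M, E_{2n}^+) \leq \|x\|_n \dminus 1$.

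For the reverse inequality, let $P \in E_{2n}^+$ be arbitrary and set $s := \|M - P\|_{2n}$. Since $M - P$ is hermitian, the auxiliary fact yields $M - P \preceq s\cdot 1_{2n}$, and hence $N_{1+s} = M + s\cdot 1_{2n} \succeq P \succeq 0$, placing $N_{1+s}$ in $E_{2n}^+$. Proposition \ref{prop:order-norm} then forces $\|x\|_n \leq 1 + s$, and combined with $s \geq 0$ this gives $s \geq \|x\|_n \dminus 1$. Taking the infimum over $P \in E_{2n}^+$ completes the argument. There is no real obstacle: both the case $\|x\|_n \leq 1$ (where $M$ itself is positive, matching $\|x\|_n \dminus 1 = 0$) and the case $\|x\|_n > 1$ are absorbed uniformly into the $\max$ defining $t$ and into the nonnegativity of $s$.
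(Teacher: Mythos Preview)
Your proof is correct and is genuinely different from the paper's argument. You work entirely at the level of the order structure: the upper bound comes from the single positive element $N_t$ with $t=\max\{\|x\|_n,1\}$, and the lower bound follows because any $P\in E_{2n}^+$ with $\|M-P\|=s$ forces $M+s\cdot 1_{2n}\succeq P\succeq 0$, so Proposition~\ref{prop:order-norm} immediately gives $\|x\|_n\leq 1+s$. The only auxiliary input is the (standard) equivalence $\|y\|\leq s \iff -s\cdot 1\preceq y\preceq s\cdot 1$ for hermitian $y$, which you cite correctly.

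By contrast, the paper's proof is a direct distance computation via a symmetry argument: it subtracts off the positive element $\begin{bmatrix} 1 & x/r\\ x^*/r & 1\end{bmatrix}$ and then conjugates by $\operatorname{diag}(1,-1)$ (and averages) to show that the nearest positive element to an off-diagonal block matrix $\begin{bmatrix} 0 & y\\ y^* & 0\end{bmatrix}$ may be taken diagonal, and in fact zero, so the distance equals $\|y\|$. Your route avoids this averaging machinery entirely and is shorter; it also treats the cases $\|x\|_n\leq 1$ and $\|x\|_n>1$ uniformly, whereas the paper handles the nontrivial case $\|x\|_n>1$ separately. The paper's approach does yield a bit more as a by-product (namely the exact distance from an off-diagonal hermitian block to the cone), but for the lemma as stated your argument is the cleaner one.
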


\begin{proof}
    We may assume without loss of generality that $n=1$ and $r := \|x\|>1$. By Proposition \ref{prop:order-norm} we have that \[\begin{bmatrix} 1 & \frac{1}{r}x\\ \frac{1}{r}x^* & 1\end{bmatrix} \in E_2^+;\] hence,
    \[ h(x) := \dist\left(\begin{bmatrix} 1 & x\\ x^* & 1\end{bmatrix}, E_2^+\right) = \dist\left(\begin{bmatrix} 0 & (1-1/r)x\\ (1-1/r)x^* & 0\end{bmatrix}, E_2^+\right).\]
    
    Noting that 
    \[\left\|\begin{bmatrix} a & b\\ b^* & c\end{bmatrix}\right\| = \left\|\begin{bmatrix} 1 & 0\\ 0 & -1\end{bmatrix}\begin{bmatrix} a & b\\ b^* & c\end{bmatrix}\begin{bmatrix} 1 & 0\\ 0 & -1\end{bmatrix}\right\| = \left\|\begin{bmatrix} a & -b\\ -b^* & c\end{bmatrix}\right\|,\]
    we see by simple arithmetic that
    \[\left\|\begin{bmatrix} 0 & y\\ y^* & 0\end{bmatrix} - \begin{bmatrix} a & b\\ b^* & c\end{bmatrix}\right\| = \left\|\begin{bmatrix} 0 & y\\ y^* & 0\end{bmatrix} - \begin{bmatrix} a & -b\\ -b^* & c\end{bmatrix}\right\|.\]
    Hence, by averaging the two expressions and using the triangle inequality we see that $h(x)$ must be approached by elements in $E_2^+$ of the form $\begin{bmatrix} a & 0\\0 & c\end{bmatrix}$. However, by the same argument as before
    \[\left\|\begin{bmatrix} 0 & y\\ y^* & 0\end{bmatrix} - \begin{bmatrix} a & 0\\0 & c\end{bmatrix}\right\| = \left\|\begin{bmatrix} 0 & -y\\ -y^* & 0\end{bmatrix} - \begin{bmatrix} a & 0\\ 0 & c\end{bmatrix}\right\|\]
    which shows by averaging that $a=c=0$ is optimal. Therefore
    
    \[h(x) = \left\|\begin{bmatrix} 0 & (1-1/r)x\\ (1-1/r)x^* & 0\end{bmatrix}\right\|_2= \left(1 - \frac{1}{\|x\|}\right)\|x\|\]
    since
    \[\begin{bmatrix} x & 0\\ 0 & x^*\end{bmatrix} = \begin{bmatrix} 0 & x\\ x^* & 0\end{bmatrix}\cdot\begin{bmatrix} 0 & 1\\ 1 & 0\end{bmatrix}.\qedhere\] 
\end{proof}

We require one further technical result: see \cite[Lemma B.1]{gs-kirchberg}.

\begin{exercise} \label{ex:cone}
    For an operator system $E$ and $x\in E_n^+$ we have that $\dist(-x, E_n^+) = \|x\|_n$.
\end{exercise}

To complete the axiomatization we need axioms that tell us that the symbols enumerated above are interpreted as they should be. These would include, for instance, axioms stating that each $E_n$ has the structure of a $\ast$-vector space, $E_n$ is isomorphic to $M_n(E_1)$, and the $\ast$-vector space structure on $E_n$ is the natural one induced from the $\ast$-vector space structure on $E_1$ via matrix amplification. In addition to these, we need the following, more specialized, axioms.

\begin{enumerate}
    \item There is an axiom that each $\|\cdot\|_n$ is seminorm on $E_n$, and that this predicate \emph{is} the metric predicate, that is,
    \[\sup_{x,y\in \cc D_1(E_n)} |\dist(x,y) - \|x -y\|_n| = 0.\]
    This implies that each $\|\cdot\|_n$ is, in fact, a norm.
    \item We need an axiom stating that $i_n: C_n\to E_n$ is an isometric inclusion, an axiom for ensuring that the range of $i_n$ lies in the hermitian part of $E_n$,
    \[\sup_{x\in \cc D_1(C_n)} \dist(i_n(x),i_n(x)^*)=0,\]
    and an axiom (by way of Exercise \ref{ex:cone}) ensuring that $-i_n(C_n)\cap i_n(C_n) = \{0\}$, 
    \[\sup_{x,y\in \cc D_1(C_n)} \left|\|x\|_n \dminus \dist(i_n(x),-i_n(y))\right| = 0.\]
    %Additionally we need an axiom expressing that $i_n(C_n)$ spans $E_n$,
    %\[\sup_{x\in \cc D_1(E_n)}\inf_{a,b,c,d\in \cc D_2(C_n)} \dist\left(x, i_n(a) - i_n(b) + \sqrt{-1}(i_n(c) - i_n(d))\right) = 0,\]
    %which suffices by the previous axioms and completeness. (It follows from Remark \ref{rmk:pos-cone} that this axiom is satisfied for concrete operator systems. We leave it as an exercise to show that the axioms guarantee that if 
    %\[x_k :=  i_n(a_k) - i_n(b_k) + \sqrt{-1}(i_n(c_k) - i_n(d_k))\] is Cauchy, then so are the sequences $(i_n(a_k))$, $(i_n(b_k))$, $(i_n(c_k))$, and $(i_n(d_k))$ individually.)

    \item Finally, there is an axiom stating the content of Lemma \ref{lem:dist-to-positives}, that is, 
    \[\sup_{x\in \cc D_r(E_n)} \left| (\|x\|_n\dminus 1) - \dist(h_n(x),C_{2n})\right|=0.\]
\end{enumerate}

\begin{remark}
    The last axiom is important as it guarantees that $1$ is interpreted as a (complete) order unit. To see this, let $x\in E_n$ with $x = x^*$ and set $r := \|x\|_n$. We have that $\|\frac{1}{r} x\|_n =1$, thus
    \[h_n\left(\frac{1}{r}x\right) = \begin{bmatrix} 1_n & \frac{1}{r} x\\ \frac{1}{r}x & 1_n \end{bmatrix}\in C_{2n}.\] We then have that $v^*h_n(x)v = 1_n + \frac{1}{r}x\in C_n$ where $v = [\frac{1}{\sqrt 2}I_n, \frac{1}{\sqrt 2}I_n]^t$.
\end{remark}

\begin{remark}
    As a consequence of the previous remark we have that $i_n(C_n)$ spans $E_n$ for all $n$. Indeed, for all $x\in E_n$ with $\|x\|_n\leq 1$ we have that 
    \[x = \left((1_n + (x+x^*)/2) - 1_n\right) + \sqrt{-1}\left((1_n - \sqrt{-1}(x-x^*)/2) - 1_n\right)\]
    which expresses $x$ as a linear combination of four elements in the positive cone, each with norm at most $2$.
\end{remark}

So far the axiomatization says that any model can be interpreted as a matrix ordered $\ast$-vector space with order unit (see Remark \ref{rem:matrix-ordered-ast-vector-space}), the only difference from being an abstract operator system then is the issue of whether $1$ is an \emph{archimedean} order unit. To bridge this last gap, we will rely crucially on the fact that the metric structure gives rise to a norm $\|\cdot\|_n$ on each $E_n$.

\begin{lem} \label{lem:archimedean-norm}
    Let $E$ be an matrix ordered $\ast$-vector space with order unit $1$. Setting \[\|x\|_n' := \inf\left\{t>0 : \begin{bmatrix} t1_n & x\\ x^* & t1_n\end{bmatrix}\in E_{2n}^+\right\},\] we have that that $E$ is an abstract operator system if and only if one of the following equivalent conditions holds:
    \begin{enumerate}
        \item $1$ is an archimedean order unit for $E$;
        
        \item $1_n$ is an archimedean order unit for $E_n$ for all $n$;
        
        \item $\|\cdot\|_n'$ is a norm for all $n$.
        
        \item $\|\cdot\|_1'$ is norm.
    \end{enumerate}
\end{lem}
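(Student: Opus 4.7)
The plan is to prove the cycle (1) $\Rightarrow$ (2) $\Rightarrow$ (3) $\Rightarrow$ (4) $\Rightarrow$ (1), noting that (1) is the content of $E$ being an abstract operator system per Definition~\ref{defn:abstract-os}. The implication (3) $\Rightarrow$ (4) is immediate. The other three implications each exploit the compression axiom $a^* E_n^+ a \subset E_k^+$ for $a\in M_{n,k}$ in different guises.

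The two compression directions are handled as follows. For (1) $\Rightarrow$ (2), given $x \in E_n^h$ with $-\varepsilon 1_n \preceq x \preceq \varepsilon 1_n$ for every $\varepsilon > 0$ and any $a\in M_{n,1}$, the axiom yields $-\varepsilon \|a\|^2 1 \preceq a^* x a \preceq \varepsilon \|a\|^2 1$, so (1) forces $a^* x a = 0$. Writing $x = [x_{ij}]$ with $x_{ij}\in E$ and $x_{ji} = x_{ij}^*$, the successive choices $a = e_i$, $a = e_i + e_j$, and $a = e_i + i e_j$ give $x_{ii} = 0$, $x_{ij} + x_{ij}^* = 0$, and $x_{ij} - x_{ij}^* = 0$, whence $x = 0$. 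For (2) $\Rightarrow$ (3), if $\|x\|_n' = 0$ then $\begin{bmatrix}\varepsilon 1_n & x \\ x^* & \varepsilon 1_n\end{bmatrix} \in E_{2n}^+$ for every $\varepsilon > 0$ (after absorbing the infimum into an added positive multiple of $1_{2n}\in E_{2n}^+$), and compressing by $\tfrac{1}{\sqrt 2}[I_n,\pm I_n]^t$ and $\tfrac{1}{\sqrt 2}[I_n,\pm i I_n]^t$ in $M_{2n,n}$ shows that the hermitian elements $\tfrac12(x+x^*)$ and $\tfrac{i}{2}(x-x^*)$ are bounded by $\pm\varepsilon 1_n$ in $E_n$; archimedeanness at level $n$ from (2) then forces both to vanish, so $x = 0$.

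For the closing step (4) $\Rightarrow$ (1), let $x\in E^h$ satisfy $-\varepsilon 1 \preceq x \preceq \varepsilon 1$ for all $\varepsilon > 0$. The tensor identity
\[\begin{bmatrix} \varepsilon 1 & x \\ x & \varepsilon 1\end{bmatrix} = \tfrac{1}{2}\begin{bmatrix} 1 & 1 \\ 1 & 1\end{bmatrix}\otimes(\varepsilon 1 + x) + \tfrac{1}{2}\begin{bmatrix} 1 & -1 \\ -1 & 1\end{bmatrix}\otimes(\varepsilon 1 - x),\]
combined with the standing consequence $M_2^+\otimes E^+ \subset E_2^+$ of Definition~\ref{defn:abstract-os} (highlighted in the ensuing remark), places the left-hand side in $E_2^+$. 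Hence $\|x\|_1' \leq \varepsilon$ for every $\varepsilon > 0$, so $\|x\|_1' = 0$, and (4) gives $x = 0$. This last step is the main obstacle, since it requires bridging level-one archimedean data to level-two positivity data; once one spots the rank-one tensor decomposition above, the argument is short, and the remaining implications are essentially bookkeeping with the compression axiom.
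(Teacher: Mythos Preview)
Your proof is correct and follows essentially the same strategy as the paper: both rely on the compression axiom $a^*E_n^+a\subset E_k^+$ and its consequence $M_2^+\otimes E^+\subset E_2^+$ to pass between levels, and both isolate positive-definiteness of $\|\cdot\|_n'$ as the nontrivial content (tacitly treating the seminorm axioms as routine). Your organization as a clean cycle and the explicit rank-one tensor decomposition in (4)$\Rightarrow$(1) are slightly more direct than the paper's version, which reaches the same conclusion via $\|x+r1\|_1'\leq 2r$ and the triangle inequality, but the underlying idea is identical.
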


\begin{proof}
    ($1 \Leftrightarrow 2$) Suppose that $1_n$ is not archimedean for some $n$, that is, there exists a non-zero $x\in E_n^h$ so that $r1_n \pm x\in E_n^+$ for all $r>0$. We must have that $y = v^*xv\not= 0$ for some unit vector $v\in \bb R^n$. Seeing that $r1 \pm y = v^*(r1_n \pm x)v\in E_1^+$, we verify that $1$ is not archimedean. The converse is trivial.
    
    ($2\Rightarrow 3$) We leave it as an exercise to check that $\|\cdot\|_n'$ is always a seminorm, that is, $\|x\|_n' = \|-x\|_n'$, $\|\la x\|_n' = |\la|\,\|x\|_n'$, and $\|x+y\|_n'\leq \|x\|_n' + \|y\|_n'$ for all $\la\in \bb C$ and $x,y\in E_n$. For a nonzero $x\in E_n$, we have that \[y := \begin{bmatrix} 0 & x\\ x^* & 0\end{bmatrix}\in E_{2n}^h;\] thus, there is a $t>0$ so that for all $s<t$ it is not the case that $s1_{2n} \pm y\in E_{2n}^+$. It follows that $\|x\|_n' = t$.
    
    ($3\Rightarrow 2$) We can assume without loss of generality that $1$ is not archimedean as witnessed by $x\in E^h$. Now since $x+r1\in E^+$,
    \[ \begin{bmatrix} x+r1 & x+r1\\ x+r1 & x+r1\end{bmatrix} = [1,1]^t (x+r1)[1,1]\in E_2^+\]
    and
    \[\begin{bmatrix} (2r)1 & x+r1\\ x+r1 & (2r)1\end{bmatrix} - \begin{bmatrix} x+r1 & x+r1\\ x+r1 & x+r1\end{bmatrix} = \begin{bmatrix} r1 - x & 0\\ 0 & r1 - x\end{bmatrix}\in E_2^+.\]
    Since $|\|x+r1\|_1' - \|x\|_1'|\leq \|r1\|_1'\leq r$, we conclude that $\|x\|_1' =0$, so $\|\cdot\|_1'$ is not a norm.
    
    Finally, ($4\Rightarrow 3$) is trivial and ($1\Rightarrow 4)$ is a special case of ($2\Rightarrow 3$). \qedhere

\end{proof}

With this lemma now in hand, we see that the axiomatization of abstract operator systems as metric structures is complete. We note in passing that by the Choi--Effros representation theorem for abstract operator systems each $E_n$ is metrically complete in the $\|\cdot\|_n'$-norm, so there was nothing lost in insisting upon this in the metric structure language, though nothing is gained by doing so.

\subsection{Formulas, Theories, and Models}

Now that the language is built we briefly discuss the basic aspects of the continuous model theory of operator systems. We refer the reader to \cite[Chapter 2]{model-c-star} and \cite{model-metric} for a more in-depth treatment.

Let $\cc L$ be a language for metric structures as described in \cite[Section 2.1]{model-c-star}. To each domain of quantification in each sort we assign an infinite number of variables. 

\begin{defn} {}\
    
    \begin{enumerate}
        \item An \emph{atomic formula} $\vp(x_1,\dotsc,x_n)$ is an expression in the language, built using finitely many variables and function symbols, which terminates with the application of a predicate. (We will assume all predicates take values in $\bb R^+$.) Since each variable $x_i$ has an assigned domain of quantification $\cc D_i$, we have that $f$ has domain
        \[\domain(\vp) := \cc D_1\times \dotsb \times \cc D_n.\]
        Since each variable comes with a (bounded) domain of quantification and each function symbol has a modulus of uniform continuity we can assign a bounded \emph{range} $[0,K]\subset \bb R^+$ to $\vp$.
        
        \item A \emph{connective} is a uniformly continuous function $f: (\bb R^+)^k\to \bb R^+$.
        
        \item If $\vp_1,\dotsc,\vp_k$ are atomic formulas over a common set of variables $x = (x_1,\dotsc,x_n)$, we can define a \emph{quantifier-free formula} as an expression of the form
        $g(x) = f(\vp_1(x),\dotsc,\vp_k(x))$ where $f$ is a connective.
        
        \item A \emph{formula} $h(x)$ is an expression of the form 
        \[h(x) = Q_\ell \dotsb Q_1\, g(x_1,\dotsc,x_n)\]
        where $\ell\leq n$, $g$ is a quantifier-free formula, and $Q_i$ is either the \emph{existential quantifier} $\inf_{x_i\in \cc D_i}$ or the \emph{universal quantifier} $\sup_{x_i\in \cc D_i}$. A formula is a \emph{sentence} if all variables are quantified, that is, if $\ell =n$. Similarly to atomic formulas, each formula has an associated domain and bounded range.
        
        \item To each formula $h(x) = h(x_1,\dotsc,x_n)$ and each metric $\cc L$-structure $M$, we can associate an \emph{interpretation} $h(x)^M$ of $h$ which is the function from $\cc D_1(M)\times \dotsc \times D_n(M)$ to $\bb R^+$ determined by interpreting all variables, function symbols, domains of quantification, etc., in the language $\cc L$ in $M$. If $h$ has range $[0,K]$, then this means that $h(x)^M\in [0,K]$ for all $\cc L$-structures $M$ and all $x_1,\dotsc,x_n\in M$.
    \end{enumerate}
\end{defn}

\begin{example}
    Let $\cc L$ be the language of operator systems described above. As we will explain below in Remark \ref{rmk:no-positives} the sorts corresponding to the positive cones do not add any expressive power in terms of defining formulas, so it suffices to build formulas only using the sorts $(E_n)$. Since the matrix entries of any variable in $E_n$ are interpreted as being in the sort $E=E_1$, it suffices to only consider formulas with variables in this sort alone. Thus, in the language of operator systems, atomic formulas are effectively expressions of the form $\|p(x)\|_d$ where $p(x)$ is a linear matrix $\ast$-polynomial of degree $d$, where each variable is restricted to a domain of quantification in $E_1$. It is a bit annoying to have to deal with the domains of quantification at this level, rather than just assigning each variable to the (unbounded) sort $E_1$. We will take this view, so each $\|p(x)\|_d$ is technically what we will term an \emph{unbounded atomic formula}. 
\end{example}

\begin{defn} \label{def:model}
    Let $\cc L$ be a language for metric structures.
    \begin{enumerate}
        \item Any collection $T$ of $\cc L$-sentences is called a \emph{theory}.
        
        \item An $\cc L$-structure $M$ \emph{models} $T$, written $M\models T$, if $h^M = 0$ for all sentences $h\in T$.
        
        \item A theory $T$ is said to be \emph{consistent} if there is an $\cc L$-structure $M$ so that $M\models T$.
        
        \item The \emph{theory} of an $\cc L$-structure $M$, denoted $\Theory(M)$ is the collection of all $\cc L$-sentences $h$ so that $h^M = 0$. Cleary, $M\models \Theory(M)$.
        
        \item We write $\Model(T)$ for the class of all models of a theory. We say that a class $\cc C$ of $\cc L$-structures is \emph{elementary} if $\cc C = \Model(T)$ for some theory $T$.
    \end{enumerate} 
\end{defn}

We will discuss more about elementary classes at the end of the next section.

\subsection{Ultraproducts}

Let $I$ be an arbitrary set, and let $(E_i)_{i\in I}$ be a collection of (concrete) operator systems indexed by $I$. We define the \emph{direct product} $\prod_{i\in I} E_i$ to be the set of all bounded functions $x: I\to \bigsqcup_{i\in I} E_i$, the disjoint union, with $x_i\in E_i$ for all $i\in I$. This is a Banach space $\ast$-vector space under pointwise addition, scalar multiplication, and involution and norm $\|x\| = \sup_{i\in I} \|x_i\|_{E_i}$. For representations $E_i\subset B(H_i)$, we see that $\prod_{i\in I} E_i$ is a concrete operator system isometrically represented on the Hilbert space direct sum $\bigoplus_{i\in I} H_i$ via 
\[x(\oplus_{i\in I} \xi_i) := \oplus_{i\in I} x_i\xi_i.\]
It is left as an exercise to check that under this concrete representation we have that $(\prod_{i\in I} E_i)^+ = \prod_{i\in I} E_i^+$ with unit $1=(1_i)_{i\in I}$ where $1_i\in E_i$ is the unit.

For the rest of the section $I$ will be a fixed directed set and $\cc U$ an ultrafilter on $I$. Our task will be to give a ``concrete'' definition of the ultraproduct of the operator systems $(E_i)_{i\in I}$ and then explain how this ultraproduct is the ultraproduct at the level of the language of operator systems that we have developed. Consider an arbitrary collection $\phi = (\phi_i)_{i\in I}$ of matrix states $\phi_i\in \cc S_n(E_i)$. Each such $\phi$ induces a matrix state \[\phi_{\cc U}: \prod_{i\in I} E_i\to M_n,\ \textup{defined by}\ \phi_\cc U(x) := \lim_{\cc U} \phi_i(x_i).\]

\begin{exercise} \label{ex:ultra-kernel}
    Show that for $\phi= (\phi_i)\in \prod_{i\in I} \cc S(E_i)$
    \[\bigcap_{\phi = (\phi_i)} \ker(\phi_{\cc U}) = \left\{(x_i)\in \prod_{i\in I} E_i : \lim_{\cc U}\|x_i\| = 0\right\} =: \cc J.\] Hence, $\cc J$ is a kernel. 
\end{exercise}

\begin{exercise} \label{ex:ultra-archimedean}
    Show that the class of $1$ is an archimedean order unit for the ordered $\ast$-vector space quotient; hence, the evaluation map $\hat\cdot: \prod_{i\in I} E_i/\cc J\to (\prod_{i\in I} E_i/\cc J)^{**}$ is faithful and $\prod_{i\in I} E_i/\cc J$ itself is equipped with a quotient operator system structure.
\end{exercise}

\begin{defn} \label{defn:ultraproduct}
    We define the \emph{ultraproduct} $\prod_{\cc U} E_i$ of the operator systems $(E_i)_{i\in I}$  to be the operator system quotient $\prod_{i\in I} E_i/\cc J$. In other words, $\prod_{\cc U} E_i$ is the operator system structure defined by the matrix states
    \[\ucp\left(\prod_{\cc U} E_i, M_n\right) = \left\{\phi_{\cc U} : \phi \in \prod_{i\in I} \ucp(E_i,M_n)\right\}.\]
\end{defn}

To check that this ultraproduct is an ultraproduct of metric structures in the language of operator systems by \cite[Chapter 5]{model-metric} it suffices to show that this construction is compatible with the metric ultraproduct for each sort, that is,
\begin{equation*} \label{eq:dissection}
    \cc D_r\left(\prod_{\cc U} M_n(E_i)\right) = \prod_{\cc U} \cc D_r(M_n(E_i)),\quad \cc D_r\left((M_n(\prod_{\cc U} E_i))^+\right) = \prod_{\cc U} \cc D_r(M_n(E_i)^+)
\end{equation*}
for all $r>0$ and $n=1,2,\dotsc$. In fact, checking this for $r=1$ suffices due to the existence of a metric compatible scalar multiplication operation, and we may further assume without loss of generality that $n=1$. For the first equation this follows immediately from Exercises \ref{ex:ultra-kernel} and \ref{ex:ultra-archimedean}. 
For the second equation suppose, by way of contradiction, that there is $x=(x_i)\in \cc D_1\left(\prod_{\cc U} E_i\right)^+$ so that $\lim_{\cc U}\dist(x_i,\cc D_1(E_i^+)) =:\alpha >0$. We can clearly assume that each $x_i$ is hermitian. Since $\|x_i - 1\|\geq \alpha/2$ for $i\in \cc U$ generic, we have that $\|x_i\|\geq 1 + \alpha/2$ for $i\in \cc U$ generic. However, this contradicts that $1=\|x\| = \lim_{\cc U} \|x_i\|$, and we have established the second equation holds.

\begin{notation}
    In the case that each $E_i$ is identified with $E$, we write $E^{\cc U} := \prod_{\cc U} E_i$ and refer to $E^{\cc U}$ as the \emph{$\cc U$-ultrapower} of $E$.
\end{notation}

Let $\cc C$ be a class of $\cc L$-structures. We say that $\cc C$ is closed under taking \emph{ultraroots} if $E\in \cc C$ whenever $E^{\cc U}$ for some ultrafilter $\cc U$. The following appears as \cite[Theorem 2.4.1]{model-c-star}

\begin{prop} \label{prop:semantic-test}
    A class $\cc C$ of $\cc L$-structures is elementary (in the sense of Definition \ref{def:model}) if and only if $\cc C$ is closed under isomorphisms, ultraproducts, and taking ultraroots.
\end{prop}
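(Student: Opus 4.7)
The forward direction is routine: if $\cc C = \Model(T)$ then closure under isomorphism is immediate, since interpretations of sentences are isomorphism--invariant, while closure under ultraproducts and under ultraroots both follow from \L{}o\'s's theorem for continuous logic, asserting $h^{\prod_{\cc U} E_i} = \lim_{\cc U} h^{E_i}$ for every sentence $h$; the ultraroot case is the special case where all $E_i$ coincide.

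For the converse, set $T := \bigcap_{E \in \cc C} \Theory(E)$, so $\cc C \subseteq \Model(T)$ tautologically, and I need to establish the reverse inclusion. Fix $M \models T$. The first step is an approximation lemma: for every finite $\Sigma \subseteq \Theory(M)$ and every $\e > 0$ there exists $E_{\Sigma, \e} \in \cc C$ with $\max_{h \in \Sigma} h^{E_{\Sigma, \e}} < \e$. Otherwise $\max_{h \in \Sigma} h^E \geq \e$ for every $E \in \cc C$, so the sentence $\e \dminus \max_{h \in \Sigma} h$ vanishes on all of $\cc C$, hence belongs to $T$; but then $(\e \dminus \max_{h \in \Sigma} h)^M = 0$ forces $\max_{h \in \Sigma} h^M \geq \e$, contradicting $\Sigma \subseteq \Theory(M)$.

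Directing the index set $I := \{(\Sigma, \e)\}$ in the obvious way and choosing an ultrafilter $\cc U$ on $I$ refining the filter of tails, I set $N := \prod_{\cc U} E_{(\Sigma, \e)}$. Closure under ultraproducts gives $N \in \cc C$, and combining the approximation lemma with \L{}o\'s yields $h^N = 0$ for every $h \in \Theory(M)$. Since the connectives include translation by real constants, applying the inclusion $\Theory(M) \subseteq \Theory(N)$ to the sentences $|h - h^M|$ upgrades it to the equality $h^M = h^N$ for every sentence $h$; that is, $M \equiv N$.

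The main obstacle, and the substance of the argument, is to pass from $M \equiv N$ to $M \in \cc C$. The cleanest route is to invoke the continuous Keisler--Shelah theorem: there is an ultrafilter $\cc V$ on some index set with $M^{\cc V} \cong N^{\cc V}$. Since $N^{\cc V}$ is an ultrapower of $N \in \cc C$, closure under ultraproducts places $N^{\cc V}$ in $\cc C$; closure under isomorphism then places $M^{\cc V}$ there; and closure under ultraroots finally yields $M \in \cc C$. The delicate point is Keisler--Shelah itself, whose continuous-logic proof rests on choosing $\cc V$ so that both $M^{\cc V}$ and $N^{\cc V}$ are sufficiently saturated to admit a back-and-forth; I would quote rather than reproduce this. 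A more hands-on alternative is to construct directly a saturated enough ultrapower of $N$ into which $M$ elementarily embeds and realize the embedding as coming from a diagonal map, but the saturation/back-and-forth machinery remains the technical heart either way.
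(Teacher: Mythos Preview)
The paper does not give its own proof of this proposition; it simply cites \cite[Theorem 2.4.1]{model-c-star} for the result. Your argument is correct and is essentially the standard proof: \L{}o\'s handles the forward direction, while for the converse one shows that any $M\models T$ is elementarily equivalent to an ultraproduct $N$ of members of $\cc C$ via the approximation lemma you state, and then the continuous Keisler--Shelah theorem together with the three closure hypotheses pulls $M$ into $\cc C$.
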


\begin{example}
    By essentially the same reasoning as the proof of the Choi--Effros representation theorem (Theorem \ref{thm:choi-effros}) we may see that every operator system $E$ admits a complete order embedding into $\prod_{\cc U} M_{n_i}$ for some non-principal ultrafilter on some directed set $I$. Thus, the class of all operator systems is the smallest elementary class containing all matrix algebras which is closed under taking substructures.
\end{example}

\begin{example}
    We say an operator system $E$ is \emph{minimal} if it is a subsystem of an abelian C$^*$-algebra; equivalently, if $\widehat E\subset C(\cc S(E))$ is a complete order embedding. Since the operator system ultraproduct of unital (abelian) C$^*$-algebras is again a unital (abelian) C$^*$-algebra under the natural multiplicative structure, we have that the class of minimal operator systems is elementary.
\end{example}

\begin{problem}
    Find an axiomatization of the class of minimal operator systems.
\end{problem}

By the same token, if $\cc C$ is an elementary class of unital C$^*$-algebras, then the class of all subsystems of elements of $\cc C$ is an elementary class of operator systems.

\begin{question}
    Is there an elementary class of operator systems which is closed under taking subsystems which is not the class of all subsystems of some elementary class of unital C$^*$-algebras?
\end{question}

\subsection{Definability}

From a practical viewpoint, the most important task at hand once a theory $T$ is constructed is to begin to explore the definable sets in the models of $T$. In short, this is because the definable sets are exactly those which it is permissible to quantify over, so having a large class of natural sets being definable allows for a great range of intuitive constructions of formulas, ``breathing life'' into the theory. From the point of view of a working analyst, the concept of definability is perhaps the key feature of continuous model theory as to say that a subset of a structure is definable is to say that it possesses stability/rigidity under small perturbations. In this light many important results in the theory of operator systems and C$^*$-algebras can be seen as establishing the definability of sets of elements satisfying certain formulas. Our treatment of definability here is directly taken from \cite[Chapter 3]{model-c-star}, though we will center our discussion here on the practical definition of definability for sets which is afforded by the Beth Definability Theorem: see \cite[Section 4.2]{model-c-star} or \cite[Theorem 9.32]{model-metric}.

Let $T$ be a theory and $\cc C$ an elementary class of models of $T$. Let $F$ be an assignment to each $A\in \cc C$ a closed subset $F(A)\subset \prod_{i=1}^d \cc D_{r_i}(A)$, where $\cc D_{r_1}(A), \dotsc, \cc D_{r_d}(A)$ are domains in $A$ for $d, r_1,\dotsc, r_d$ fixed. We write 
\[\codomain(F) := \prod_{i=1}^d \cc D_{r_i}\]
so that the interpretation of $\codomain(F)$ over $A\in \cc C$ is
\[\codomain(F)^A = \prod_{i=1}^d \cc D_{r_i}(A).\]

\begin{defn}
    We say that $F$ is a \emph{uniform assignment} if $F$ is a functor, that is, for every homomorphism $\vp:A\to B$ with $A,B\in \cc M$ we have that $\vp(F(A))\subset F(B)$.
\end{defn}

\begin{remark}
    In the language of operator systems defined above we have that the morphisms are always the unital, completely positive maps which are, rather conveniently, all contractions.
\end{remark}

When considering the model theory of operator systems (or C$^*$-algebras) it makes sense to introduce the following unbounded variant of a uniform assignment. Let $\cc C$ be an elementary class in the theory of operator systems. Suppose that $F$ assigns to every $A\in \cc C$ a closed subset 
\[F(A)\subset M_{r_1}(A) \times \dotsb \times M_{r_k}(A)\times M_{s_1}(A)^+\times\dotsb \times M_{s_l}(A)^+\times \bb C^d.\] We say that $F$ is then an \emph{unbounded uniform assignment} if its restriction to every product of domains is a uniform assignment.

\begin{defn}
    Let $\cc C$ be an elementary class of operator systems and $F$ be an unbounded uniform assignment on $\cc C$. We say that $F$ is \emph{definable} if for every ultraproduct $\prod_{\cc U} E_i$ of elements $E_i\in \cc C$ we have that
    \[ F(\prod_{\cc U} E_i) = \prod_{\cc U} F(E_i).\]
\end{defn}

To every (unbounded) formula $f(x)$, there is a natural (unbounded) uniform assignment $Z_f$ which assigns to every $E\in \cc C$ the \emph{zero set}
\[Z_f(E) = \{x\in \domain(f)^E : f(x) = 0\}.\]

\begin{exercise} \label{ex:definable-epsilon-delta}
    Let $f$ be a formula. We have that $Z_f$ is definable over $\cc C$ if and only if for every $\e>0$ there is $\delta>0$ so that for all $A\in \cc C$ and $x\in \domain(f)^A$, we have that $f(x)^A\leq \de$ implies that $\dist(x,Z_f(A))\leq \e$.
\end{exercise}

\begin{thm}[Beth Definability Theorem] \label{thm:beth}

Let $\cc C$ be the class of models of a theory $T$, and let $F$ be a uniform assignment. Writing
\[p_F(x) := \dist(x,F),\quad x\in\codomain(F),\]
we interpret $p_F(x)^A$ as $\dist(x,F(A))$ for $x\in \codomain(F)^A$.
We have that $F$ is definable if and only if there is a sequence of formulas $f_1,f_2,\dotsc$ with $\domain(f_i) = \codomain(F)$ so that
\[\sup_{A\in \cc M} \sup\left\{|p_F(x)^A - f_n(x)^A| : x\in \codomain(F)^A\right\}\leq 1/n\]
for all $n=1,2,3,\dotsc$.

\end{thm}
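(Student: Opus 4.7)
The plan is to prove the equivalence in two directions. The easier direction ($\Leftarrow$) will follow from the fact that a uniform limit of formulas inherits the ultraproduct-commutation property that each formula enjoys. The harder direction ($\Rightarrow$) will be established by a compactness argument on the space of types realized by tuples from $\codomain(F)$, with the main obstacle being to verify that $p_F$ descends to a well-defined continuous function on this type space.

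For the easy direction, suppose formulas $f_n$ with $\domain(f_n) = \codomain(F)$ satisfy $\sup_{A \in \cc C,\, x}|f_n(x)^A - p_F(x)^A| \leq 1/n$. By the fundamental theorem of ultraproducts for metric structures, each $f_n$ satisfies $f_n((x_i)_{\cc U})^{\prod_{\cc U} A_i} = \lim_{\cc U} f_n(x_i)^{A_i}$, and this commutation is preserved under uniform limits; hence $p_F$ itself commutes with ultraproducts in the sense that $p_F((x_i)_{\cc U})^{\prod_{\cc U} E_i} = \lim_{\cc U} p_F(x_i)^{E_i}$. The inclusion $\prod_{\cc U} F(E_i) \subseteq F(\prod_{\cc U} E_i)$ is then immediate, since $p_F(y_i)^{E_i} = 0$ for any representatives $y_i \in F(E_i)$. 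Conversely, if $(x_i)_{\cc U} \in F(\prod_{\cc U} E_i)$ then $\lim_{\cc U} p_F(x_i)^{E_i} = 0$, and by choosing $y_i \in F(E_i)$ with $\|x_i - y_i\|_{E_i} \to 0$ along $\cc U$ we obtain $(x_i)_{\cc U} = (y_i)_{\cc U} \in \prod_{\cc U} F(E_i)$.

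For the converse, assume $F$ is definable and fix $\epsilon > 0$; we will produce a formula $f$ with $|f(x)^A - p_F(x)^A| \leq \epsilon$ uniformly over $A \in \cc C$ and $x \in \codomain(F)^A$. Let $\cc T$ denote the compact Hausdorff space of complete types over $T$ in variables with domain $\codomain(F)$, where the topology is generated by the formulas. Every formula $h$ with $\domain(h) = \codomain(F)$ induces a continuous real-valued function on $\cc T$, and the resulting family of functions contains the constants, separates points by construction, and is closed under $\max$, $\min$, and affine combinations coming from uniformly continuous connectives. The critical step is to show that $p_F$ descends to a well-defined continuous function on $\cc T$: if $(A, a)$ and $(B, b)$ realize the same type, then via a countably incomplete ultrafilter $\cc U$ and the standard realization of types in sufficiently saturated ultrapowers, $a$ and $b$ become identified under an isomorphism between ultrapowers of $A$ and $B$, whence the hypothesis $F(A^{\cc U}) = F(A)^{\cc U}$ forces $\dist(a, F(A)) = \dist(b, F(B))$; continuity of $p_F$ on $\cc T$ follows from a parallel compactness argument, combined with the $\epsilon$-$\delta$ characterization of Exercise \ref{ex:definable-epsilon-delta}.

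Once $p_F$ is seen to be a continuous function on $\cc T$, a Stone--Weierstrass-type argument in lattice form supplies formulas $f_n$ with $\sup_{\cc T}|f_n - p_F| \leq 1/n$, and pulling back these approximations to $\cc C$ produces the desired sequence. The principal obstacle throughout is the descent-and-continuity verification for $p_F$ on $\cc T$, which is precisely where the hypothesis that $F$ commutes with ultraproducts is indispensable; all other ingredients are either general ultraproduct facts or abstract approximation theory.
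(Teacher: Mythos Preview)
The paper does not supply its own proof of this theorem; it simply refers the reader to \cite[Section 4.2]{model-c-star} and \cite[Chapter 9]{model-metric}. Your outline is essentially the standard argument found in those references: the backward direction via the fact that uniform limits of formulas inherit commutation with ultraproducts, and the forward direction by showing that $p_F$ descends to a continuous function on the compact type space, followed by a lattice Stone--Weierstrass approximation by formulas.

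Two small points are worth tightening. First, your invocation of Exercise~\ref{ex:definable-epsilon-delta} for the continuity of $p_F$ on $\cc T$ is misplaced: that exercise concerns definability of the zero set of a \emph{formula}, whereas here $F$ is not assumed to be cut out by any formula. Continuity of $p_F$ on $\cc T$ is instead obtained by the same ultraproduct argument you use for well-definedness: if types $q_i$ converge to $q$ in the logic topology but $p_F(q_i)$ stays bounded away from $p_F(q)$, realize each $q_i$ in some $A_i$, take an ultraproduct, and observe that the resulting tuple realizes $q$ while (by the definability hypothesis $F(\prod_{\cc U} A_i) = \prod_{\cc U} F(A_i)$) its $p_F$-value is $\lim_{\cc U} p_F(q_i)$, contradicting well-definedness. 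Second, for the well-definedness step itself you are implicitly using the Keisler--Shelah theorem in the language expanded by constants for the tuple, so that the isomorphism $A^{\cc U}\cong B^{\cc U}$ carries $a$ to $b$; this is exactly what ``same complete type'' provides, but it is worth saying explicitly, together with the observation that $F$, being functorial, is preserved by this isomorphism.
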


We refer the reader to \cite[Section 4.2]{model-c-star} or \cite[Chapter 9]{model-metric} for a proof.

\begin{defn}
    If $\cc M$ is the class of models for a theory $T$ and $p$ is a predicate, we say that $p$ is \emph{definable} if there is a sequence of formulas $f_1,f_2,\dotsc$ with $\domain(f_i) = \domain(p)$ so that
    \[\sup_{A\in \cc M} \sup\left\{|p(x)^A - f_n(x)^A| : x\in \domain(p)^A\right\}\leq 1/n.\]
\end{defn}

The content of the Beth Definability theorem is then to say that if $F$ is a definable functor, then 
$p_F(x) = \dist(x,F)$ is a definable predicate. In the unbounded case, this means $p_F$ restricted to any product of domains of quantification in $\codomain(F)$ is a definable predicate.

\begin{example}
    The hermitian elements in any operator system form a definable set.
\end{example}

\begin{example} \label{ex:cone-predicate}
    For each $n$ we have that $d_n^+(x) := \dist(x, i_n(C_n))$, $x\in E_n$, is a definable predicate.
\end{example}

\begin{remark} \label{rmk:no-positives}
    In fact, something even stronger may be said. Consider the (unbounded) quantifier-free formula
    \[p_{t,n}(x) := \|2x - t1_n\|_n \dminus t\]
    for $t\geq 0$ with domain $E_n^h$. We have that $p_{t,n}(x)=0$ if and only if $\|x\|_n\leq t$ and $x\in C_n$. This is because for a hermitian element $x\in E_n$ we have that $\|2x - t 1_n\|_n\leq t$ if and only if $0\preceq x\preceq t 1_n$. Therefore, every formula in the language of operators systems can be replaced with an equivalent formula where all quantifiers have domain in $E_n$ for some $n$. That is, it is never necessary to quantify over the positive cones.
\end{remark}

\begin{prop} \label{prop:lmi-beth}
     For each homogeneous, hermitian linear matrix $\ast$-polynomial $p(x)$ of degree $d$ in $x = (x_1,\dotsc,x_n)$, the set of tuples $X=(X_1,\dotsc,X_n)$ in $E_k$ or $E_k^h$ satisfying $1_d\otimes 1_k \succeq p(X)$ is a definable set.
\end{prop}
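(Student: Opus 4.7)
The plan is to apply the Beth Definability Theorem (Theorem \ref{thm:beth}) to the unbounded uniform assignment
$$F(E) := \{X \in E_k^n : 1_d\otimes 1_k \succeq p(X)\}.$$
(The case of tuples in $E_k^h$ is analogous, as ucp maps preserve hermiticity and the rescaling below is by a real scalar.)

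First I would verify that $F$ is indeed an unbounded uniform assignment. Given a unital completely positive map $\varphi: E \to E'$, the amplification $\id_{M_d} \otimes \varphi_k: E_{dk} \to E'_{dk}$ is again unital and completely positive, and by linearity it sends $1_d\otimes 1_k - p(X)$ to $1_d\otimes 1_k - p(\varphi_k(X))$; positivity is thus preserved, giving $\varphi_k(X) \in F(E')$. Since ucp maps are contractive (Exercise \ref{ex:unital-2-positive-contraction}), this restricts to each bounded product of domains.

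By the Beth theorem, it then suffices to check that $F$ commutes with ultraproducts: $F(\prod_{\cc U} E_i) = \prod_{\cc U} F(E_i)$ restricted to each bounded domain $\|X_j\|_k \leq R$. The inclusion $\supseteq$ is immediate by functoriality applied to the canonical maps $E_i \to \prod_{\cc U} E_i$.

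The crux, and only nontrivial step, is $\subseteq$. Given $X = (X^{(i)})_{\cc U} \in F(\prod_{\cc U} E_i)$ with lifts satisfying $\|X^{(i)}_j\|_k \leq R$, the element $1_d\otimes 1_k - p(X)$ lies in the positive cone of the ultraproduct. By Definition \ref{defn:ultraproduct} together with Exercises \ref{ex:ultra-kernel}--\ref{ex:ultra-archimedean}, it admits a representative $(Y^{(i)})_{\cc U}$ with $Y^{(i)} \in C_{dk}(E_i)$; writing $\delta_i := \|Y^{(i)} - (1_d\otimes 1_k - p(X^{(i)}))\|_{dk}$, we have $\lim_{\cc U} \delta_i = 0$. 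Since $Y^{(i)} \succeq 0$ and the difference is hermitian of norm at most $\delta_i$, we conclude that
$$(1+\delta_i)\, 1_d\otimes 1_k \succeq p(X^{(i)})$$
for $\cc U$-almost all $i$. Here I would exploit the homogeneity of $p$: the real rescaling $\tilde X^{(i)} := (1+\delta_i)^{-1} X^{(i)}$ satisfies $p(\tilde X^{(i)}) = (1+\delta_i)^{-1} p(X^{(i)}) \preceq 1_d\otimes 1_k$ by linearity, so $\tilde X^{(i)} \in F(E_i)$. Because $\delta_i \to 0$ along $\cc U$ and the lifts are uniformly bounded, $(\tilde X^{(i)})_{\cc U} = X$, exhibiting $X$ as a member of $\prod_{\cc U} F(E_i)$ and finishing the proof.
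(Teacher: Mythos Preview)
Your proof is correct and uses essentially the same idea as the paper: both arguments hinge on the observation that if $(1+\e)\,1_d\otimes 1_k \succeq p(X)$ then the rescaling $X' = (1+\e)^{-1}X$ lands exactly in the constraint set by homogeneity, and is $O(\e)$-close to $X$ on bounded domains. The only cosmetic difference is that the paper packages this via the $\e$--$\delta$ characterization of definability (Exercise~\ref{ex:definable-epsilon-delta}), whereas you verify the ultraproduct criterion directly; your invocation of Beth is slightly superfluous, since commuting with ultraproducts is already the \emph{definition} of definability here, not a hypothesis of Beth.
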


\begin{proof}
    For ease of notation we will write $1$ for $1_d\otimes 1_k$. Suppose that $\dist(1- p(X),C_{dk})< \e$. Since $1 - p(X)$ is hermitian, we claim \[(1+\e)1 \succeq p(X).\] Indeed, since there exists $Y\in C_{dk}$ so that $\|1 - p(X) - Y\|_{dk}\leq \e$, we have that 
    \[\e 1\succeq 1-p(X) - Y\succeq -\e 1,\ \textup{so}\ 1 - p(X) \succeq Y - \e 1\succeq -\e 1.\]
    Setting $X' = \frac{1}{1+\e}X$, by linearity and homogeneity $1\succeq p(X')$ and $\|X_i' - X_i\|_k\leq \e\|X_i\|_k$ for $i=1,\dotsc,n$. The result now follows by Exercise \ref{ex:definable-epsilon-delta}. \qedhere
    
\end{proof}

\begin{exercise}
    Let $p(x)$ be a homogeneous, hermitian linear matrix $\ast$-polynomial. If $p(x)\succeq cI$ has a solution over $\bb R$ for some $c>0$, then the set of tuples $X=(X_1,\dotsc,X_n)$ in $E$ or $E^h$ satisfying $p(X)\succeq 0$ is definable.
\end{exercise}

\begin{question}
    If $p(x)$ is a homogeneous linear matrix $\ast$-polynomial, is the set of all tuples $X_1,\dotsc,X_n$ in $E_k$ satisfying $1\succeq p(X)$ definable? If not, is there a natural class of operator systems or C$^*$-algebras where all such sets are definable?
\end{question}

\begin{remark}
    We may define a linear operator $\ast$-polynomial to be an expression of the form $q(x) = A_1\otimes x_1 + \dotsb + A_n\otimes x_n + B_1\otimes x_1^* + \dotsb + B_n\otimes x_n^* + C\otimes 1$ where $A_1,\dotsc,A_n,B_1,\dotsc,B_n,C\in \cc B(H)$. All terminology from linear matrix $\ast$-polynomials carries over to this more general context equally well. Just as an finite family of linear matrix $\ast$-polynomials can be combined into a single linear matrix $\ast$-polynomial, any infinite family of linear matrix $\ast$-polynomials can be combined into a single linear operator $\ast$-polynomial.
    
    The main point is that by the same proof as Proposition \ref{prop:lmi-beth}, we have the following.
    
    \begin{prop} \label{prop:lmi-beth-op}
        For each homogeneous, hermitian linear operator $\ast$-polynomial $q(x)$ in $x = (x_1,\dotsc,x_n)$, the set of tuples $X_1,\dotsc,X_n$ in $E_k$ or $E_k^h$ satisfying $1_H\otimes 1_k \succeq q(X)$ is definable.
    \end{prop}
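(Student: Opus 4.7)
The plan is to follow the proof of Proposition \ref{prop:lmi-beth} almost verbatim, with the only new ingredient being that we reduce the infinite-dimensional positivity condition to matrix compressions of $q$. Write $F(E) := \{X \in E_k^n : 1_H \otimes 1_k \succeq q(X)\}$ (the variant for $E_k^h$ is handled identically), where $q(X)$ is interpreted in $\cc B(H) \otimes E_k$ via any faithful embedding $E_k \subset \cc B(K)$. I would first verify that $F$ is an unbounded uniform assignment: closure of $F(E)$ is clear from continuity of $X \mapsto q(X)$ and closedness of the positive cone, and functoriality under a UCP morphism $\varphi : E \to E'$ follows since $\id_H \otimes \varphi$ is UCP and $q$ is linear, so $q(\varphi(X)) = (\id_H \otimes \varphi)(q(X)) \preceq (\id_H \otimes \varphi)(1) = 1$.

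Next, for each finite-dimensional $V \subset H$ of dimension $d$ with orthogonal projection $p_V$, I would form the compression $q_V(x) := (p_V \otimes 1_k)^* q(x) (p_V \otimes 1_k)$, which under any identification $V \cong \bb C^d$ is a homogeneous hermitian linear matrix $\ast$-polynomial of degree $d$. Since the positive cone of $\cc B(H) \otimes E_k$ is determined by its finite-dimensional compressions, we have $1_H \otimes 1_k \succeq q(X)$ if and only if $1_V \otimes 1_k \succeq q_V(X)$ for every finite-dimensional $V$. Now I apply the scaling argument of Proposition \ref{prop:lmi-beth} uniformly: if for every $V$ we have $\dist(1_V \otimes 1_k - q_V(X), C_{dk}) < \varepsilon$, then in each compression the hermitian element $1_V \otimes 1_k - q_V(X)$ is bounded below by $-\varepsilon 1_V \otimes 1_k$, and since this holds in every compression we get $(1+\varepsilon) 1_H \otimes 1_k \succeq q(X)$ in $\cc B(H) \otimes E_k$. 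By homogeneity, $X' := X/(1+\varepsilon)$ lies in $F(E)$ and satisfies $\|X_i' - X_i\|_k \leq \varepsilon \|X_i\|_k$.

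Finally, I would conclude definability via the Beth Definability Theorem (Theorem \ref{thm:beth}) and the ultraproduct criterion. Given an ultraproduct $E = \prod_{\cc U} E_i$, the inclusion $\prod_\cc U F(E_i) \subset F(E)$ follows from the fact that $(\id_H \otimes \pi_\cc U)$ preserves positivity; conversely, if $X \in F(E)$ is represented by $(X^{(i)})$, then for every $\varepsilon > 0$ and every finite-dimensional $V$, the uniform scaling argument above applied to $X/(1+\varepsilon)$ on each coordinate produces a representative in $\prod F(E_i)$ within distance $\varepsilon$ of $X$, showing $X \in \prod_\cc U F(E_i)$.

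The main obstacle is conceptual rather than technical: the natural candidate formula $\dist(1 - q(X), (\cc B(H) \otimes E_k)^+)$ is not literally a formula in the language of operator systems when $H$ is infinite-dimensional, since it is expressed only as a supremum over an unbounded family of matrix compressions. One cannot simply cite Exercise \ref{ex:definable-epsilon-delta} with a single defining formula as in the matrix case, so the cleanest route is to verify definability through the uniform-assignment/ultraproduct characterization, where the uniform $\varepsilon$-$\delta$ bound from the scaling argument does all the work at once.
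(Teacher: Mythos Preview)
Your approach is essentially what the paper intends: it says only ``by the same proof as Proposition~\ref{prop:lmi-beth},'' meaning the homogeneous scaling $X\mapsto X/(1+\e)$ is the entire mechanism, and you have reproduced that mechanism faithfully. You are also right to flag that Exercise~\ref{ex:definable-epsilon-delta} cannot be quoted verbatim, since $\dist(1-q(X),(\cc B(H)\otimes E_k)^+)$ is not a formula in the language when $\dim H=\infty$; the paper glosses over this.

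That said, your proposed resolution via the ultraproduct criterion leaves a genuine step unaccounted for. In the hard inclusion $F(\prod_{\cc U}E_i)\subset\prod_{\cc U}F(E_i)$ you need, for a given $X=(X^{(i)})_{\cc U}\in F(\prod_{\cc U}E_i)$ and $\e>0$, that $X^{(i)}/(1+\e)\in F(E_i)$ for $\cc U$-generic $i$. What the compression argument actually gives is: for each fixed finite-dimensional $V$, for $\cc U$-generic $i$, the compressed inequality $1_V\succeq q_V(X^{(i)}/(1+\e))$ holds. Passing from ``for each $V$, for generic $i$'' to ``for generic $i$, for all $V$'' is a swap of $\lim_{\cc U}$ with a supremum over infinitely many $V$, and your sentence ``the uniform scaling argument applied to $X/(1+\e)$ on each coordinate produces a representative in $\prod F(E_i)$'' does not explain why this swap is legitimate. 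Equivalently, writing $g(Y)=\sup_V \dist(1_V-q_V(Y),C^+)$, the scaling argument gives the $\e$--$\delta$ bound $g(Y)<\e\Rightarrow\dist(Y,F(E))<\e\|Y\|$, but to run the ultraproduct argument you still need $\lim_{\cc U}g(X^{(i)})=0$, and an infinite supremum of definable predicates need not satisfy \L o\'s. The paper's one-line proof is silent on exactly this point as well, so you are not missing anything the paper supplies --- but if you want a self-contained argument you should either justify this quantifier exchange or find another route (for instance, exploiting that the coefficients of $q$ span a finite-dimensional operator system $S\subset\cc B(H)$ and arguing directly with $S\otimes_{\min}(-)$).
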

    
    Via block diagonal embedding we have the following consequence.
    
    \begin{cor}
        Let $\{p_i : i\in I\}$ be a collection of homogeneous, hermitian linear matrix $\ast$-polynomials in $x = (x_1,\dotsc,x_n)$. The set of tuples $X_1,\dotsc,X_n$ in $E_k$ or $E_k^h$ satisfying $1_{d_i}\otimes 1_k \succeq p_i(X)$ for all $i\in I$ is definable.
    \end{cor}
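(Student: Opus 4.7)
The plan is to reduce the corollary to a single application of Proposition \ref{prop:lmi-beth-op} by packaging the entire family $\{p_i\}_{i\in I}$ into one homogeneous, hermitian linear operator $\ast$-polynomial $q(x)$ acting on the Hilbert space $H := \bigoplus_{i\in I}\bb C^{d_i}$. Writing each $p_i(x) = \sum_j a_{i,j}\otimes x_j + \sum_j a_{i,j}^*\otimes x_j^*$ with $a_{i,j}\in M_{d_i}$, I would set $A_j := \bigoplus_i a_{i,j} \in \cc B(H)$ and define
\[q(x) := \sum_{j=1}^n A_j\otimes x_j + \sum_{j=1}^n A_j^*\otimes x_j^*,\]
so that $q(X) = \bigoplus_{i\in I} p_i(X)$ in $\cc B(H)\otimes E_k$. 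This is precisely the block-diagonal embedding mentioned in the remark preceding the corollary, and the resulting $q$ is manifestly homogeneous and hermitian.

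The substantive step is the blockwise positivity observation: a self-adjoint block-diagonal element $\bigoplus_i Y_i$ in $\cc B(H)\otimes E_k$ is positive if and only if each $Y_i\in M_{d_i}(E_k)$ is positive. Applying this with $Y_i = 1_{d_i}\otimes 1_k - p_i(X)$ yields
\[1_H\otimes 1_k\succeq q(X)\ \text{if and only if}\ 1_{d_i}\otimes 1_k\succeq p_i(X)\ \text{for every}\ i\in I.\]
Proposition \ref{prop:lmi-beth-op} applied to $q$ then delivers the definability of the solution set for tuples in $E_k$; the $E_k^h$ version follows by intersecting with the definable set of hermitian tuples (see the example preceding Example \ref{ex:cone-predicate}).

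The only real hazard is the boundedness of $A_j$: the direct sum $\bigoplus_i a_{i,j}$ belongs to $\cc B(H)$ only when $\sup_i\|a_{i,j}\|<\infty$, which is implicit in the preceding remark's assertion that any infinite family of linear matrix $\ast$-polynomials can be combined into a single linear operator $\ast$-polynomial, and which I would simply invoke. Dropping uniform boundedness would force a blockwise renormalization of each $p_i$, which in turn demands a matching renormalization on the right-hand sides and thereby forces a nonzero constant term into $q$; that would necessitate a non-homogeneous strengthening of Proposition \ref{prop:lmi-beth-op} and is the main obstacle one would face in such an extension, though it is not needed for the corollary as stated.
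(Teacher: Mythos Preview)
Your proof is correct and is exactly the paper's approach: the paper's argument is the single phrase ``Via block diagonal embedding we have the following consequence,'' and you have filled in the details of that embedding together with the appeal to Proposition~\ref{prop:lmi-beth-op}. Your remark on the uniform boundedness of the $A_j$ is a legitimate subtlety the paper leaves implicit. One small correction: do not derive the $E_k^h$ case by ``intersecting with the definable set of hermitian tuples''---intersections of definable sets need not be definable in general---but simply invoke the $E_k^h$ clause already present in Proposition~\ref{prop:lmi-beth-op}, which the same block-diagonal $q$ feeds into directly (the rescaling $X\mapsto \tfrac{1}{1+\e}X$ in that proof preserves hermiticity).
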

\end{remark}

For an operator system $E$ we may regard the set $\ucp(M_n,E)$ as a closed subset of $\cc D_1(E)^{n^2}$ via the correspondence $\vp \leftrightarrow (\vp(e_{ij}))_{ij}$. The following result was first observed in \cite[Section 5.8]{model-c-star}.

\begin{prop} \label{prop:ucp-mn-e-def}
    For each $n$ we have that $\ucp(M_n,E)$ is a definable set for the class of all operator systems.
\end{prop}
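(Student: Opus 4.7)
The plan is to apply the Beth Definability Theorem (Theorem~\ref{thm:beth}) in its zero-set formulation via Exercise~\ref{ex:definable-epsilon-delta}: I will exhibit a formula $f$ whose zero set in $\cc D_1(E)^{n^2}$ is the image of $\ucp(M_n,E)$ under the Choi correspondence $\vp\leftrightarrow (\vp(e_{ij}))$, and then verify the $\eps$-$\delta$ continuity criterion. By Lemma~\ref{lem:adjoint}, this image is precisely the set of tuples for which the \emph{Choi matrix} $[X_{ij}]\in E_n$ is positive and hermitian with $\sum_i X_{ii}=1$. Since $[X_{ij}]$ is assembled as a fixed linear combination of the $X_{ij}$ in the language---via the amplification symbols $f_{1,n}$ together with a polarization identity to recover off-diagonal matrix units---I would take
\[
f(X) \;:=\; d_n^+\!\bigl([X_{ij}]\bigr) \;+\; \sum_{i,j}\|X_{ij}-X_{ji}^*\| \;+\; \Bigl\|1-\sum_i X_{ii}\Bigr\|,
\]
using that $d_n^+$ is a definable predicate by Example~\ref{ex:cone-predicate}. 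The zero set of $f$ is exactly the desired image.

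The bulk of the work is the $\eps$-$\delta$ estimate: given $X\in\cc D_1(E)^{n^2}$ with $f(X)<\delta$, I would build a UCP Choi matrix $Z$ within $O_n(\delta)$ of $X$ in every coordinate, via three stages. First, \emph{hermitize}: set $X^\sharp_{ij}:=(X_{ij}+X_{ji}^*)/2$; this moves each coordinate by at most $\delta/2$, and since $[X^\sharp_{ij}]$ is the hermitian part of $[X_{ij}]$ and $E_n^+\subset E_n^h$, we still have $\dist([X^\sharp_{ij}],E_n^+)\leq\delta$. Second, \emph{correct the trace}: setting $\sigma:=\sum_i X^\sharp_{ii}$ (so $\|\sigma-1\|\leq\delta$), subtract $(\sigma-1)/n$ from each diagonal entry to form $[Y_{ij}]$; then $\sum_i Y_{ii}=1$, the additional perturbation is $((1-\sigma)/n)\otimes I_n$ of $E_n$-norm at most $\delta/n$, and $\dist([Y_{ij}],E_n^+)\leq 2\delta$. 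Third, \emph{positivize and renormalize}: since $1_n$ is an order unit, $W:=[Y_{ij}]+2\delta\cdot 1_n\succeq 0$, and $\sum_i W_{ii}=1+2n\delta$ is a genuine scalar multiple of $1_E$, so $Z:=W/(1+2n\delta)$ is itself a UCP Choi matrix. A routine norm computation, using the crude bound $\|[Y_{ij}]\|_{E_n}\leq n^2$ (valid because $\|X_{ij}\|\leq 1$), shows $\|Z-[Y_{ij}]\|_{E_n}=O_n(\delta)$, and entry-wise $\|Z_{ij}-X_{ij}\|_E\leq C_n\delta$ for all $i,j$.

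The delicate point I would focus on is arranging positivity and unitality \emph{simultaneously} after a small perturbation. A naive positivize-then-rescale on a nearly-positive Choi matrix with approximately unital trace would distort the trace by a non-scalar element of $E$, violating the condition $\sum_i Y_{ii}=1_E$. Step two above is the essential trick: by first forcing the trace to equal the scalar $1_E$, the positivity shift in step three perturbs the trace only by a scalar multiple of $1_E$, which is exactly what ordinary scalar rescaling can correct. Taking $\delta<\eps/C_n$ then verifies the $\eps$-$\delta$ criterion, whence definability follows.
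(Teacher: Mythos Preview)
Your argument is correct and takes a genuinely different route from the paper's.  The paper verifies the ultraproduct characterization directly: given $X\in M_n(\prod_{\cc U}E_i)^+$ with $\tr(X)=1$, it first lifts to $X_i\in M_n(E_i)^+$ (using that positive cones commute with ultraproducts), then repairs the trace by setting $b_i:=\tr(X_i)$ and conjugating entrywise, $[X_i']_{kl}:=b_i^{-1/2}[X_i]_{kl}\,b_i^{-1/2}$.  You instead work through the $\eps$--$\de$ criterion of Exercise~\ref{ex:definable-epsilon-delta}, building an explicit definable predicate and repairing an almost-Choi matrix by (i) hermitizing, (ii) shifting the diagonal by the \emph{scalar} element $(\sigma-1)/n$, then (iii) adding $2\de\cdot 1_n$ and dividing by the resulting \emph{scalar} trace $1+2n\de$.

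What each buys: the paper's conjugation trick is slicker when available, but it tacitly uses functional calculus---forming $b_i^{-1/2}$ and multiplying---which lives in an ambient C$^*$-algebra rather than in the operator system $E_i$ itself; one must then argue that the result still lands in $M_n(E_i)$.  Your three-step repair uses only the $\ast$-vector-space operations and the order unit, so it stays entirely inside $E$ and is self-contained for arbitrary operator systems.  The key observation you isolate---force the trace to be the scalar $1_E$ \emph{before} positivizing, so that the subsequent order-unit shift perturbs the trace by a genuine scalar---is exactly what makes the elementary approach go through.
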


\begin{proof}
    We have by Lemma \ref{lem:adjoint} that $\cp(M_n,E)\cong \cp(\bb C, M_n(E))$. This correspondence identifies $\ucp(M_n,E)$ with the set of all $X\in M_n(E)^+$ with $\tr(X) =\sum_i X_{ii} =1$. Suppose $E = \prod_{\cc U} E_i$. We have by the remarks after Definition \ref{defn:ultraproduct}  that for any $X\in M_n(E)^+$ there is a net $X_i\in M_n(E_i)^+$ with $X = (X_i)$. Given any $\e>0$ we have that $\|\tr(X_i) - 1\|\leq \e$ for $i\in \cc U$ generic. Let $b_i := \tr(X_i)\in E^+$. Since $\|b_i-1\|\leq \e$ we have that $b$ is invertible with $\|b_i^{-1/2}-1\|<\e^{1/2}$. Setting 
    \[[X_i']_{kl} := b_i^{-1/2}[X_i]_{kl}b_i^{-1/2}\] we have that $X_i'\in M_n(E_i)^+$ with $\tr(X_i')=1$ and $\|X_i - X_i'\|\leq 2n^2\e^{1/2}$.
\end{proof}

\begin{defn} \label{defn:positive}
    We say that a formula $f$ is \emph{positive} if it is built from atomic formulas only using connectives $\theta(x_1,\dotsc,x_n)$ satisfying $\theta(x_1,\dotsc,x_n)\leq \theta(y_1,\dotsc,y_n)$ when $x_i\leq y_i$, $i=1,\dotsc, n$.
\end{defn}

\begin{remark}
    Note that the proof above shows that $\ucp(M_n,E)$ is explicitly the zero set of the quantifier-free, positive formula $f(X) := \|\tr(X) -1\|$ where $\domain(f) = \cc D_{n^2}(C_n)$.
\end{remark}

Let $S_n$ be the uniform assignment with codomain $\cc D_1(E)^n\times \bb C^n$ which assigns to each operator system the (closed) subset of all $(x_1,\dotsc,x_n,\la_1,\dotsc,\la_n)$ so that $x_i\mapsto \la_i$ extends to a state on $E$. The following result is essentially contained in \cite[Section 5.8]{model-c-star}, though we give a different proof here.

\begin{prop} \label{prop:state-def}
    We have that $S_n$ is the zero set of a uniform limit of quantifier-free formulas. (We refer to this condition on $S_n$ as being \emph{quantifier-free definable}.)
\end{prop}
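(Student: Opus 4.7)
The strategy is to construct a single predicate $f(x,\lambda)$, obtained as a uniform limit of quantifier-free formulas, whose zero set is exactly $S_n(E)$. The key characterization underlying the construction is the following: $(x,\lambda)\in S_n(E)$ if and only if the formal $\bb C$-linear assignment $1\mapsto 1$, $x_i\mapsto \lambda_i$, $x_i^*\mapsto \bar\lambda_i$ extends consistently to a well-defined, unital, contractive functional on the finite-dimensional subspace $F := \spn_\bb C\{1,x_1,\dotsc,x_n,x_1^*,\dotsc,x_n^*\} \subset E$. Indeed, given such a functional $\phi_0: F\to\bb C$, the Hahn--Banach theorem produces a unital contractive extension $\tilde\phi: E\to\bb C$; by Proposition \ref{ex:contraction-positive} this extension is automatically $\ast$-linear, and by Proposition \ref{prop:contraction-positive} it is then positive, hence a state with $\tilde\phi(x_i)=\lambda_i$. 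The converse is immediate since any state is a contraction by Exercise \ref{ex:unital-2-positive-contraction}. Both well-definedness and contractivity of the partial assignment are captured by the single inequality
\[
    \left|\alpha_0 + \sum_i \alpha_i\lambda_i + \sum_i \beta_i\bar\lambda_i\right| \leq \left\|\alpha_0 1 + \sum_i \alpha_i x_i + \sum_i \beta_i x_i^*\right\|_E
\]
holding for all $\alpha_0,\alpha_1,\dotsc,\alpha_n,\beta_1,\dotsc,\beta_n\in\bb C$, the well-definedness claim following by specializing to tuples where the right-hand side vanishes.

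Using positive homogeneity of both sides in the scalar parameters, define
\[
    f(x,\lambda) := \sup_{\gamma\in S}\, \max\!\left\{0,\ \left|\alpha_0 + \sum_i \alpha_i\lambda_i + \sum_i \beta_i\bar\lambda_i\right| - \left\|\alpha_0 1 + \sum_i \alpha_i x_i + \sum_i \beta_i x_i^*\right\|_E\right\},
\]
where $\gamma = (\alpha_0,\alpha_1,\dotsc,\alpha_n,\beta_1,\dotsc,\beta_n)$ ranges over the unit sphere $S$ of $\bb C^{2n+1}$. By the characterization above, $Z_f = S_n$. To exhibit $f$ as a uniform limit of quantifier-free formulas, fix $k\in\bb N$, choose a finite $(1/k)$-net $\Gamma_k\subset S$, and let $f_k$ be obtained from $f$ by replacing $\sup_{\gamma\in S}$ with $\max_{\gamma\in\Gamma_k}$. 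Each $f_k$ is a finite continuous combination of atomic expressions of the form $\|\alpha_0 1 + \sum_i\alpha_i x_i + \sum_i\beta_i x_i^*\|_E$ together with the bounded scalar connective $|\alpha_0 + \sum_i\alpha_i\lambda_i + \sum_i\beta_i\bar\lambda_i|$, and so is quantifier-free.

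Uniform convergence $f_k\to f$ on each product of domains of quantification follows since the integrand is Lipschitz in $\gamma$, with Lipschitz constant controlled by a simple function of the prescribed radii on $\|x_i\|$ and $|\lambda_i|$, uniformly in $(x,\lambda)$. The main point requiring care is the reduction of state existence to the single quantifier-free inequality above; this hinges on the chain Proposition \ref{ex:contraction-positive} $\Rightarrow$ Proposition \ref{prop:contraction-positive}, which upgrades a Hahn--Banach extension from merely $\bb C$-linear and contractive to $\ast$-linear and positive, so that no separate verification of positivity or $\ast$-invariance of the partial data is needed. Everything else is a routine $\e$-net approximation combined with the compactness of the unit sphere in $\bb C^{2n+1}$.
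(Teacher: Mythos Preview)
Your proof is correct and follows essentially the same approach as the paper's: characterize membership in $S_n$ by the contractivity inequality $|\alpha_0+\sum\alpha_i\lambda_i+\sum\beta_i\bar\lambda_i|\le\|\alpha_0 1+\sum\alpha_i x_i+\sum\beta_i x_i^*\|$, package this as a supremum of quantifier-free expressions over a compact scalar parameter set, and approximate by finite nets. The only cosmetic differences are that the paper supremizes over the polydisk rather than the unit sphere of $\bb C^{2n+1}$, and obtains the state extension via Arveson's theorem (Proposition~\ref{prop:contraction-positive} $+$ Lemma~\ref{lem:state-cp} $+$ Theorem~\ref{thm:arveson}) rather than your more elementary Hahn--Banach $+$ Proposition~\ref{ex:contraction-positive} route.
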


\begin{proof}
     Let $\bb D\subset \bb C$ be the closed unit disk. For convenience we write $x = (x_{-n},\dotsc,x_n)$ with $x_0=1$ and $x_{-i} = x_i^*$ and $\la = (\la_{-n},\dotsc, \la_n)$ with $\la_{-i} = \bar\la_i$ and $\la_0=1$. Let 
    \[f_n(x,\la) := \sup_{a_i\in \bb D} \left(\|\sum_i a_i\la_i\| \dminus \|\sum_i a_ix_i\|\right)\]
    and note that $f_n$ is a uniform limit of quantifier-free formulas. (Recall $x\dminus y = \max\{x-y,0\}$.) Indeed, let $(K_p)$ be a finite $1/p$-net in $\bb D$, and note that $f_n$ is a uniform limit of the formulas
    \[f_{n,p}(x,\la) = \max_{a_i\in K_p} \left(\|\sum_i a_i\la_i\| \dminus \|\sum_i a_ix_i\|\right)\]
    by an application of the triangle inequality.
    
    From now on, let us fix an operator system $E$. We claim that $f_n(x,\la)^E=0$ if and only if $x_i\mapsto \la_i$ extends to a state on $E$. By Proposition \ref{prop:contraction-positive}, Lemma \ref{lem:state-cp}, and Theorem \ref{thm:arveson} it suffices to check that $f_n(x,\la)^E = 0$ implies that $\vp:x_i\mapsto\la_i$ defines a unital, $\ast$-linear contraction on $F := \spn\{x_{-n},\dotsc,x_n\}$. Observe that $f_n(x,\la)^E=0$ is equivalent to 
    \[\|\sum_i a_ix_i\|\geq \|\sum_i a_i\la_i\|\ \textup{for all}\ a_{-n},\dotsc,a_n\in \bb D.\]
    Suppose that $\sum_i a_i x_i = \sum_i b_ix_i$ so that $\sum_i \frac{1}{2}(a_i - b_i) x_i = 0$; thus, $\sum_i \frac{1}{2}(a_i - b_i)\la_i =0$. This shows that $\vp: F\to \bb C$ is a well-defined, unital, $\ast$-linear map. Finally, $\{\sum_i a_ix_i : a_i\in \bb D\}$ contains a sufficiently small ball about the origin in $F$, so $\vp$ is a contraction.
\end{proof}

Since $M_n(\bb C)\subset M_n(E)$ canonically $\ast$-isometrically for any operator system $E$, we have the following corollary. 

\begin{cor} \label{cor:ucp-e-mn-def}
     Let $S_{n,k}$ be the uniform assignment with codomain $\cc D_1(E)^n\times \cc D_1(M_k(E))^n$ which assigns to each operator system the (closed) subset of all $(x_1,\dotsc,x_n,y_1,\dotsc,y_n)$ so that $y_1,\dotsc,y_n\in M_k(\bb C)$ and $x_i\mapsto y_i$ extends to a matrix state. We have that $S_{n,k}$ is quantifier-free definable.
\end{cor}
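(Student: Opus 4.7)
The plan is to reduce the matrix-state condition to the scalar-state condition of Proposition \ref{prop:state-def} via the bijection from Lemma \ref{lem:adjoint}.

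First, I would invoke Lemma \ref{lem:adjoint} with $n = k$ and $F = \bb C$: a UCP map $\tilde\vp : E \to M_k$ corresponds, up to the scalar factor $k$, to a state $\psi$ on $M_k(E)$ via $\psi(e_{pq} \otimes z) = \tilde\vp(z)_{pq}/k$, with unitality of $\tilde\vp$ translating to $\psi(e_{pq} \otimes 1_E) = \delta_{pq}/k$. Hence $(x, y) \in S_{n, k}(E)$ if and only if there exist scalars $\la_{i, p, q} \in \bb D$ such that (i) $(y_i)_{pq} = k\la_{i, p, q} \cdot 1_E$ in $E$ for all $i, p, q$ (which simultaneously enforces $y_i \in M_k(\bb C)$ and identifies its entries), and (ii) the assignment $e_{pq} \otimes 1_E \mapsto \delta_{pq}/k$, $e_{pq} \otimes x_i \mapsto \la_{i, p, q}$ extends to a state on $M_k(E)$.

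Next, I would apply Proposition \ref{prop:state-def} to the operator system $M_k(E)$ with $N := (n+1)k^2$ data points, producing a uniform limit $f_N$ of quantifier-free formulas whose zero set is precisely the set of tuples satisfying (ii). Combining with (i), I would define
\[
g(x, y) := \inf_{\la \in \bb D^{nk^2}} \left( f_N\bigl((e_{pq} \otimes x_i,\, e_{pq})_{i, p, q};\, (\la_{i, p, q},\, \delta_{pq}/k)_{i, p, q}\bigr) + \sum_{i, p, q} \| (y_i)_{pq} - k\la_{i, p, q} \cdot 1_E \|_E \right),
\]
where the tuple on the left of the semicolon lists the $nk^2$ elements $e_{pq} \otimes x_i$ followed by the $k^2$ elements $e_{pq}$ in $M_k(E)$, and the tuple on the right gives the corresponding target scalars. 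Since $\bb D^{nk^2}$ is compact and the integrand is uniformly continuous in $\la$, the infimum can be uniformly approximated by finite minima over progressively finer $\eps$-nets, so $g$ is itself a uniform limit of quantifier-free formulas, and $g^{-1}(0) = S_{n, k}$.

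The main technical point to verify is that the substituted expressions $e_{pq} \otimes x_i$, $e_{pq}$, and $(y_i)_{pq}$ are all terms in the operator system language. The entries $(y_i)_{pq}$ are produced immediately by the coordinate projection symbols $p_k^{pq}$; elements of the form $e_{pq} \otimes z \in M_k(E)$ for $z \in E$ are constructible from the function symbols $f_{1, k} : (a, z) \mapsto a^* z a$ via a short polarization identity applied to $a \in \{e_p, e_p \pm e_q, e_p \pm i e_q\} \subset M_{1, k}$. This bookkeeping is routine but essential to confirm that quantifier-freeness is preserved under the substitution.
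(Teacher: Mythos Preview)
Your proposal is correct and follows essentially the same route the paper has in mind: the paper's entire proof is the single sentence ``Since $M_k(\bb C)\subset M_k(E)$ canonically $\ast$-isometrically for any operator system $E$, we have the following corollary,'' and you have faithfully unpacked what that sentence is gesturing at---namely, pass from $\ucp(E,M_k)$ to states on $M_k(E)$ via Lemma~\ref{lem:adjoint}, then invoke Proposition~\ref{prop:state-def} on $M_k(E)$. Your handling of the normalization factor $k$, the compactness argument replacing the infimum over $\bb D^{nk^2}$ by finite minima, and the polarization trick for building $e_{pq}\otimes z$ as a term are all correct and are exactly the bookkeeping the paper suppresses.
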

 
\begin{remark} \label{rmk:nuclear-quotient}
    The previous proofs do not show that the $S_{n,k}$ are \emph{positive} quantifier-free definable since $\dminus$ is not a positive connective. In fact, it is not possible for $n\geq 5$ for $S_{n,k}$ to be positive quantifier-free definable for all $k$. This is because such a result would show that any quotient of a nuclear operator system (see section \ref{sec:exact-nuclear} below) would be nuclear, essentially following the reasoning of \cite[Section 5.14]{model-c-star}, but there is a five-dimensional nuclear operator system with a non-nuclear quotient which is a variant of Example \ref{ex:linear-quotient}; see \cite{Kavruk2015}.
\end{remark}

\subsection{Model Theory of C$^*$-Algebras}

We will treat the model theory of C$^*$-algebras in a much more abbreviated fashion than operator systems. We refer the reader to Szabo's article in this volume for background on the basic theory of C$^*$-algebras and to \cite{model-c-star, fhs-ii} for an in-depth treatment of the model theory. In brief, the language of C$^*$-algebras is the language of a normed, complex algebra $A$ with a unary involution function $\ast$ satisfying the axiom
\[\|x^*x\|= \|x\|^2\ \textup{for all}\ x\in A.\]
The sorts are, naturally, axiomatized to be the balls around the origin. This is all spelled out in \cite[Section 3.1]{fhs-ii} or \cite[Section 2.2]{model-c-star}.
The atomic formulas in the theory of C$^*$-algebras are of the form $\|p(x)\|$ where $p$ is a noncommutative $\ast$-polynomial in the variables $x=(x_1,\dotsc,x_n)$.

Let $A$ be a C$^*$-algebra. We say that an element $x\in A$ is \emph{positive} if $x = y^*y$ for some $y\in A$. (Importantly, $y$ can be chosen with $\|y\| = \sqrt{\|x\|}$.) It is well known that the set $A^+$ of positive elements forms a closed cone. In the case that $A$ is a unital C$^*$-algebra then the unit is an archimedean order unit with respect to $A^+$. As in the case of operator systems, this positivity structure is crucial to the theory of C$^*$-algebras.

\begin{exercise}
    We have that $A^+$ is an (unbounded) definable set in the language of C$^*$-algebras; thus, $\dist(x,A^+)$ is a definable predicate.
\end{exercise}

As noted above, every unital C$^*$-algebra is an operator system under the natural positivity structure imposed on $M_n(A)$ by the fact that $M_n(A)$ is again a C$^*$-algebra. We would now like to establish that every property about $A$ that can be expressed in the language of operator systems can be expressed in the language of C$^*$-algebras. This reduces to proving the following result, due to Lupini, which can be found in \cite[Appendix C]{gs-kirchberg}.

\begin{prop} \label{prop:os-definable}
    Let $\cc C$ be the elementary class of unital C$^*$-algebras. The uniform assignment which sends $A\in \cc C$ to the norm unit ball of $M_n(A)$ considered as a subset of $\cc D_1(A)^{n^2}$ is definable.
\end{prop}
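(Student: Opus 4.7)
The plan is to verify the ultraproduct characterization of definability directly: one must show that for every ultraproduct $\prod_{\cc U} A_i$ of unital C$^*$-algebras,
\[F\!\left(\prod_{\cc U} A_i\right) = \prod_{\cc U} F(A_i),\]
where $F(A) := \{(x_{ij}) \in \cc D_1(A)^{n^2} : \|[x_{ij}]\|_{M_n(A)} \leq 1\}$.

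The foundational step is to identify $M_n(\prod_{\cc U} A_i)$ with $\prod_{\cc U} M_n(A_i)$ as C$^*$-algebras, via the natural entrywise $\ast$-algebra map. Injectivity follows from the two-sided bound $\|z_{ij}\|_{A_k} \leq \|[z_{ij}]\|_{M_n(A_k)} \leq n^2 \max_{i,j}\|z_{ij}\|_{A_k}$, and surjectivity from the fact that any bounded representing family $(Y^{(k)})$ for an element of $\prod_{\cc U} M_n(A_k)$ has entries bounded in each $A_k$, which then define an element of $M_n(\prod_{\cc U} A_i)$ with the same image. Since a $\ast$-isomorphism between C$^*$-algebras is automatically isometric, one obtains the norm identity
\[\|[x_{ij}]\|_{M_n(\prod_{\cc U} A_i)} = \lim_{\cc U} \|[x_{ij}^{(k)}]\|_{M_n(A_k)}\]
for any bounded representatives $(x_{ij}^{(k)})$ of $(x_{ij})$.

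The inclusion $\prod_{\cc U} F(A_i) \subset F(\prod_{\cc U} A_i)$ is immediate from this identity. For the reverse, suppose $(x_{ij}) \in F(\prod_{\cc U} A_i)$, pick representatives $(x_{ij}^{(k)}) \in \cc D_1(A_k)^{n^2}$, and set $\lambda_k := 1/\max\{1, \|[x_{ij}^{(k)}]\|_{M_n(A_k)}\}$ together with $y_{ij}^{(k)} := \lambda_k x_{ij}^{(k)}$. By construction $(y_{ij}^{(k)}) \in F(A_k)$ for every $k$; and since $\lim_{\cc U} \|[x_{ij}^{(k)}]\|_{M_n(A_k)} \leq 1$ forces $\lim_{\cc U} \lambda_k = 1$, the estimate $\|x_{ij}^{(k)} - y_{ij}^{(k)}\|_{A_k} \leq |1-\lambda_k|$ shows that $(y_{ij}^{(k)})$ represents the same element $(x_{ij})$ in $\prod_{\cc U} A_i$. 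Hence $(x_{ij}) \in \prod_{\cc U} F(A_i)$.

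The main obstacle is really just the first step, the identification of matrix amplifications of ultraproducts with ultraproducts of matrix amplifications. This is a standard bookkeeping fact about C$^*$-algebra ultraproducts, but the entire argument rests on it; once it is in place, definability reduces to the elementary rescaling in the last paragraph.
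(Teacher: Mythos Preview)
Your argument is correct, but it follows a genuinely different route from the paper's. The paper proceeds constructively: it introduces the Hilbert C$^*$-module norm on $A^n$, defines $\|x\|_A := \sup\{\|xa\| : \|\sum_i a_i^*a_i\|\leq 1\}$, verifies via the module Cauchy--Schwarz inequality that $\|\cdot\|_A$ satisfies the C$^*$-identity on $M_n(A)$, and then invokes uniqueness of C$^*$-norms to conclude that this explicitly definable expression (definable by the sum-of-squares lemma preceding the proposition) computes the operator norm. Your argument instead uses the semantic ultraproduct criterion directly: the entrywise $\ast$-isomorphism $M_n(\prod_{\cc U} A_i)\to \prod_{\cc U} M_n(A_i)$ is automatically isometric by uniqueness of C$^*$-norms, so matrix norms pass through ultralimits, and a rescaling finishes.

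Both proofs ultimately rest on the same fact---uniqueness of C$^*$-norms---but they deploy it at different points. The paper's approach buys you an explicit $\sup$-type formula for $\|\cdot\|_{M_n(A)}$ in the language of C$^*$-algebras, which matters if one later wants to track quantifier complexity when interpreting operator-system formulas. Your approach is softer, avoids the Hilbert-module detour, and is arguably more transparent, but it produces no formula; it only certifies that one exists via Beth definability. For the bare statement of the proposition either is adequate.
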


\noindent Since by Remark \ref{rmk:no-positives} every formula in the language of operator systems is equivalent to one quantified only over balls in $E_n$ for some $n$, it follows that the operator system structure on $A$ is definably interpretable in the language of C$^*$-algebras.

We begin by noting the following lemma.

\begin{lem} \label{lem:sos-definable}
    The set of all tuples $(x_1,\dotsc,x_n)$ so that $\|\sum_{i=1}^n x_i^*x_i\|\leq 1$ is definable.
\end{lem}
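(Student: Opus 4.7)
The plan is to apply the $\varepsilon$-$\delta$ characterization of definability from Exercise \ref{ex:definable-epsilon-delta}. For each unital C$^*$-algebra $A$, let
\[F(A) := \{(x_1,\dotsc,x_n) \in \cc D_1(A)^n : \|\sum_i x_i^*x_i\| \leq 1\}.\]
First I would observe that $F(A)$ is automatically contained in $\cc D_1(A)^n$ (rather than merely in $A^n$): if $\|\sum_j x_j^*x_j\| \leq 1$, then from the positivity inequality $0 \preceq x_i^*x_i \preceq \sum_j x_j^*x_j$ one gets $\|x_i\|^2 = \|x_i^*x_i\| \leq 1$ for each $i$. The assignment $F$ is a uniform assignment, since morphisms in the category of unital C$^*$-algebras are contractive $\ast$-homomorphisms and therefore send $(x_i) \in F(A)$ to $(\phi(x_i)) \in F(B)$.

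The natural defining formula is the quantifier-free, positive formula $f(x_1,\dotsc,x_n) := \|\sum_i x_i^*x_i\| \dminus 1$, whose zero set on $\cc D_1(A)^n$ is precisely $F(A)$. By Exercise \ref{ex:definable-epsilon-delta}, it suffices to produce, for each $\varepsilon > 0$, a $\delta > 0$ so that $f(x)^A \leq \delta$ forces $\dist((x_i), F(A)) \leq \varepsilon$, uniformly in $A$. The argument is a straightforward rescaling: given $(x_i) \in \cc D_1(A)^n$ with $\|\sum_i x_i^*x_i\| \leq 1+\delta$, set $t := (1+\delta)^{1/2}$ and $y_i := t^{-1} x_i$. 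Then $\|\sum_i y_i^*y_i\| = t^{-2}\|\sum_i x_i^*x_i\| \leq 1$, so $(y_i) \in F(A)$; moreover, the same positivity estimate applied to the bound $1+\delta = t^2$ yields $\|x_i\| \leq t$, whence
\[\|x_i - y_i\| = (1 - t^{-1})\|x_i\| \leq t - 1 = \sqrt{1+\delta} - 1 \leq \delta/2.\]
Choosing $\delta := 2\varepsilon$ then completes the verification.

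There is no real obstacle here; the proof rests on the single elementary inequality $\|x_i^*x_i\| \leq \|\sum_j x_j^*x_j\|$, which both forces $F$ to live inside $\cc D_1(A)^n$ and allows the uniform scaling $y_i = t^{-1}x_i$ to produce a genuine element of $F(A)$ close to the given tuple.
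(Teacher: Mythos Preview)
Your proof is correct and follows essentially the same rescaling approach as the paper, which merely observes that $\|\sum_i x_i^*x_i\|\leq 1$ is equivalent to $\sum_i x_i^*x_i\preceq 1$ and then defers to the argument of Proposition~\ref{prop:lmi-beth}; you have supplied the explicit details, correctly adjusting the scaling factor to $(1+\delta)^{-1/2}$ to account for the quadratic rather than linear homogeneity. One incidental quibble: the paper elsewhere (Remark~\ref{rmk:nuclear-quotient}) treats $\dminus$ as not being a positive connective, so calling $f$ ``positive'' is debatable, though this plays no role in your argument.
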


\begin{proof}
    Since $\sum_{i=1}^n x_i^*x_i$ is positive, we have that $\|\sum_{i=1}^n x_i^*x_i\|\leq 1$ if and only if \[\sum_{i=1}^n x_i^*x_i\preceq 1.\]
    The proof now follows along the similar lines to the proof of Proposition \ref{prop:lmi-beth}.
\end{proof}

\begin{proof}[Proof of Proposition \ref{prop:os-definable}] 

For tuples $a,b\in A^n$ define $\ip{a}{b} := \sum_{i=1}^n a_i^*b_i\in A$ and $\|a\| := \|\ip{a}{a}\|^{1/2}$. This gives $A^n$ the structure of a left Hilbert C$^*$-module in the sense of \cite{Lance1995}. By \cite[Proposition 1.1]{Lance1995}, the following variant of the Cauchy--Schwarz inequality is valid:
\[ \|\ip{a}{b}\|\leq \|a\|\,\|b\|.\]
Note that there is a natural action of $M_n(A)$ on $A^n$ by left multiplication and that $\ip{xa}{b} = \ip{a}{x^*b}$ where $x^*$ is the usual adjoint of $x$ in $M_n(A)$.

For $x\in M_n(A)$ we define the norm
\[ \|x\|_A := \sup\left\{\|\ip{xa}{b}\| : \|a\|,\ \|b\|\leq 1\right\} = \sup\{\|xa\| : \|a\|\leq 1\},\]
where the last equality is a consequence of the Cauchy--Schwarz inequality above. It is therefore the case that 
\begin{equation*}
    \begin{split}
        \|x^*x\|_A &= \sup\left\{\|\ip{x^*xa}{b}\| : \|a\|,\ \|b\|\leq
        1\right\}\\
        &= \sup\left\{\|\ip{xa}{xb}\| : \|a\|,\ \|b\|\leq 1\right\}\\
        &\geq \sup\left\{\|xa\|^2 : \|a\|\leq 1\right\} = \|x\|_A^2.
    \end{split}
\end{equation*}
It is straightforward to check that $\|xy\|_A\leq \|x\|_A\|y\|_A$ for all $x,y\in M_n(A)$, hence $\|x^*x\|_A = \|x\|_A^2$. That $M_n(A)$ is complete in the $\|\cdot\|_A$-norm is just an application of the triangle inequality and completeness of $A$. Thus, we see that $\|\cdot\|_A$ is a C$^*$-norm on $M_n(A)$. It must then be the case that $\|\cdot\|_A$ is identical to the operator norm on $M_n(A)$ as C$^*$-norms are unique as a consequence of the fact that every $\ast$-homomorphism of C$^*$-algebras is contractive. Since $\|\cdot\|_A$ is definable by Lemma \ref{lem:sos-definable}, the result now follows. \qedhere

%Let $x\in M_n(A)$. Any unital representation $\pi: A\to \cc B(H)$ extends canonically to a unital representation $\tilde\pi: M_n(A)\to \cc B(H^{\oplus n})$. It is not hard to see that every unital representation of $M_n(A)$ arises this way: for a representation $\rho: M_n(A)\to \cc B(K)$, set $H = \rho(e_{11})K$ and $\pi(x) = \rho(xe_{11})$ for $x\in A$. Thus,
%\[ \|x\|_n = \sup_{(H,\pi)} \sup_{\xi_i,\eta_j\in H} \left\{ |\sum_{i,j} \ip{\pi(x_{ij})\xi_i}{\eta_j}| : \sum_{i=1}^n \|\xi_i\|^2,\ \sum_{j=1}^n \|\eta_j\|^2\leq 1\right\}
%\]
%over all unital representations $\pi:A \to \cc B(H)$. In fact, it suffices to consider cyclic representations, each of which is given up to unitary equivalence by a state $\phi$ on $A$ by the GNS construction $\phi \leadsto (H_\phi, \pi_\phi)$. Since $\pi_\phi(A)\hat 1_{\phi}$ is dense in $H_\phi$ for any cyclic vector $\hat 1_\phi$ corresponding to the unit in $A$ in the GNS construction, we have that
%\[\|x\|_n = \sup_{\phi\in \cc S(A)} \sup_{a_i,b_j\in A} \left\{ |\phi(\sum_{i,j} a_i^*x_{ij} b_j)| : \phi(\sum_i a_i^*a_i),\ \phi(\sum_jb_j^*b_j)\leq 1\right\}
%\] which by interchanging the order of the suprema equals
%\[ \sup_{a_i,b_j\in A} \left\{ \|\sum_{i,j} a_i^*x_{ij} b_j\| : \|\sum_i a_i^*a_i\|,\|\sum_jb_j^*b_j\|\leq 1\right\}.\]
%The result now follows since $\|\sum_i a_i^*a_i\|\leq 1$ and $\|\sum_j \|b_j^*b_j\|\leq 1$ are definable sets by Lemma \ref{lem:sos-definable}.
\end{proof}

\begin{remark} Having just established that every formula in the language of operator systems is definably interpretable in the language of C$^*$-algebras, it is natural to ask whether the converse is true. The following result shown in \cite{gs-axiom} is a step in this direction.
\begin{prop}
    The elementary class $\cc C$ of all unital $C^*$-algebras is an elementary class in the language of operator systems.
\end{prop}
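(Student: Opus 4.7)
The plan is to verify the hypotheses of the semantic test, Proposition \ref{prop:semantic-test}. Closure of $\cc C$ under isomorphism is immediate: a complete order isomorphism $\varphi: A \to B$ with $A$ a unital C$^*$-algebra lets us pull back the multiplication via $\varphi^{-1}$ to give $B$ a C$^*$-structure compatible with its operator system structure. Closure under ultraproducts is equally direct from the construction of the operator system ultraproduct in Section 3.3: if each $A_i$ is a C$^*$-algebra, then $\prod_{\cc U} A_i$ quotients $\prod_i A_i$ by precisely the null ideal used to define the C$^*$-algebra ultraproduct, so the two constructions coincide and $\prod_{\cc U} A_i$ is a C$^*$-algebra.

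The substance is closure under ultraroots. Suppose $E^{\cc U}$ is a C$^*$-algebra; I will show $E$ is one. By Theorem \ref{thm:choi-effros}, fix a unital complete order embedding $E \hookrightarrow \cc B(H)$ and let $B$ be the unital C$^*$-subalgebra of $\cc B(H)$ generated by the image of $E$, so $E \subseteq B$ is an operator system embedding and $B = C^*(E)$. Functoriality of the ultrapower for unital complete order embeddings yields an induced unital complete order embedding $E^{\cc U} \hookrightarrow B^{\cc U}$, and $B^{\cc U}$ is a C$^*$-algebra by the previous step. I now invoke the classical fact that a unital complete isometry between unital C$^*$-algebras is automatically a $*$-isomorphism onto its image; applied to $E^{\cc U} \hookrightarrow B^{\cc U}$, this realizes $E^{\cc U}$ as a C$^*$-subalgebra of $B^{\cc U}$.

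Consequently $E^{\cc U}$ contains the C$^*$-subalgebra of $B^{\cc U}$ generated by the diagonal copy $\iota(E)$ of $E$. Since the diagonal embedding $\iota: B \hookrightarrow B^{\cc U}$ is a $*$-homomorphism and $B$ is generated as a C$^*$-algebra by $E$, this subalgebra equals $\iota(B)$. So $\iota(B) \subseteq E^{\cc U}$ inside $B^{\cc U}$. But for $b \in B$, $\iota(b) \in E^{\cc U}$ means the constant sequence $(b,b,\ldots)$ is equivalent modulo the null ideal to a bounded sequence in $\prod E$; equivalently, $b$ lies in the norm closure of $E$ inside $B$, which is just $E$ since $E$ is complete. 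Hence $B \subseteq E$, forcing $E = B$, so $E$ is a C$^*$-algebra. The principal obstacle is the invocation of the classical theorem that unital complete isometries between unital C$^*$-algebras are multiplicative; the rest is a direct unwinding of the ultrapower construction.
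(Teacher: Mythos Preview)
Your ``classical fact'' is false: a unital complete isometry between unital C$^*$-algebras need not be a $*$-homomorphism, and its image need not be a C$^*$-subalgebra. Take $\phi:\bb C^2\to\bb C^3$ given by $\phi(\la,\mu)=(\la,\tfrac{\la+\mu}{2},\mu)$. Since $M_n(\bb C^k)^+=(M_n^+)^k$, one checks directly that $\phi_n$ and $(\phi_n)^{-1}$ are order isomorphisms at every level (the middle coordinate is a redundant convex combination of the outer two), so $\phi$ is a unital complete order embedding. But $\phi(1,0)\cdot\phi(0,1)=(0,\tfrac14,0)\neq 0=\phi((1,0)\cdot(0,1))$, so $\phi$ is not multiplicative and $\phi(\bb C^2)$ is not a subalgebra of $\bb C^3$.

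What \emph{is} true is the surjective version: a unital complete order \emph{isomorphism} of one C$^*$-algebra onto another is a $*$-isomorphism (equivalently, a C$^*$-structure compatible with a given operator system structure is unique when it exists). But your embedding $E^{\cc U}\hookrightarrow B^{\cc U}$ is not surjective, and knowing that $E^{\cc U}$ carries an abstract C$^*$-structure does not force it to sit inside $B^{\cc U}$ as a C$^*$-subalgebra. Without that, the step ``$E^{\cc U}$ contains $C^*(\iota(E))=\iota(B)$'' has no justification, and the argument collapses. The paper itself does not prove the proposition here but defers to \cite{gs-axiom}; closing the ultraroot case genuinely requires a characterization of the C$^*$-structure expressible in the operator system language, not a comparison with an ambient C$^*$-algebra chosen in advance.
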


\begin{question} \label{question:mult-os-definable}
    Is the multiplication operation on C$^*$-algebras definable in the language of operator systems?
\end{question}

By Proposition \ref{prop:semantic-test}, establishing that $\cc C$ forms an elementary class is equivalent to establishing that if $E$ is an operator system so that the operator system structure on some ultrapower $E^{\cc U}$ is a reduct of some (unital) C$^*$-algebra structure on $E^{\cc U}$, then the same is true for $E$. We note that while a positive solution to Question \ref{question:mult-os-definable} would show that the languages of operator systems and unital C$^*$-algebras are bi-interpretable, the quantifier complexity of equivalent formulas may differ depending on the language chosen.
\end{remark}

\section{Model Theory and Finite-Dimensional Approximation Properties} 

 The importance of finite-rank approximations to the theory of Banach spaces and its connection with the theory of Banach-space tensor products was first made apparent by the seminal work of Grothendieck \cite{grothendieck}. In the case of C$^*$-algebras, the theory of finite-dimensional (matrix) approximations and the largely parallel theory of tensor products of C$^*$-algebras were developed in foundational works of Arveson, Choi, Connes, Effros, Kirchberg, Lance, Takesaki, and Tomiyama, among others, and these ideas and concepts continue to form a central part in the theory of operator algebras and operator systems. In this last section, we survey some of the connections that have been made between the model theory of operator systems and their finite-dimensional approximation properties. We refer the reader to the books of Brown and Ozawa \cite{BrownOzawa} and Pisier \cite{pisier-ck} for an in-depth treatment of the connections between finite-dimensional approximation properties and tensor products of C$^*$-algebras and operator systems.

\subsection{Nuclear and Exact Operator Systems} \label{sec:exact-nuclear}

\begin{defn}
    An operator system $E$ is said to be \emph{nuclear} if there are nets $\vp_i : E\to M_{n_i}$ and $\psi_i: M_{n_i}\to E$ of unital, completely positive maps so that $\psi\circ\vp$ converges in the the pointwise-norm topology to the identity, that is, $\lim_i\|\psi_i\circ\vp_i(x) -x \|=0$ for all $x\in E$.
\end{defn}

\begin{remark}
    Technically nuclearity of a  C$^*$-algebra $A$ refers to the condition that for every C$^*$-algebra $B$ there is exactly one cross norm on the algebraic tensor $A\odot B$ which can be completed to a C$^*$-algebra: see Szabo's article in this volume. The equivalence of nuclearity and local completely positive factorization of the identity map through matrix algebras is due to Choi and Effros \cite{choi78} and Kirchberg \cite{kirchberg77} and was extended in the appropriate way to the operator system category in \cite{kptt2013}.
\end{remark}

\begin{remark} {}\
    \begin{enumerate}
        \item It is trivial that $M_n$ is nuclear for each $n=1,2,\dotsc$ and that $M_n(E)$ is nuclear if and only if $E$ is.
        
        \item The classes of AF and UHF C$^*$-algebras as described in Szabo's article in this volume are nuclear. The Cuntz algebras $\cc O_n$ defined therein are also nuclear.
        
        \item $\cc B(H)$ is not nuclear for $H$  infinite dimensional: see \cite[Proposition 2.4.9]{BrownOzawa}.
        
        \item It is false in general that C$^*$-subalgebras of nuclear C$^*$-algebras are nuclear. 
    \end{enumerate}
    
\end{remark}

\begin{remark}
    There is a subsystem of $M_3$ which is not nuclear. Indeed, let $E$ be the subsystem of all matrices $A\in M_3$ with $A_{13} = A_{31} =0$. If $E$ was nuclear there would be sequences of unital completely positive maps $\vp_n: E\to M_n$ and $\psi_n: M_n\to E$ so that $\psi_n\circ\vp_n\to \id_E$. By Arveson's extension theorem, we may extend each $\vp_n$ to a unital completely positive map $\tilde\vp_n: M_3\to M_n$, and we see that a cluster point of $\psi_n\circ\tilde\vp_n$ gives a completely positive projection $p: M_3\to E$. By \cite[Theorem 3.1]{Choi1977}, this would define a C$^*$-algebra structure on $E$ under the multiplication $x\cdot y: = p(xy)$. There are two $\ast$-homomorphisms $M_2\to E$,
    \[A\mapsto \begin{bmatrix} A & 0\\ 0 & 0\end{bmatrix}, \begin{bmatrix} 0 & 0\\ 0 & A\end{bmatrix}\]
    with distinct ranges, which are respected by the C$^*$-algebra structure on $E$. However, since every finite-dimensional C$^*$-algebra is a direct sum of matrix algebras this is a contradiction since $E$ is seven-dimensional but contains two distinct (non-unital) $\ast$-subalgebras isomorphic to $M_2$, so would need to have dimension at least eight.
\end{remark}

\begin{exercise}
      Let $X$ be a compact, Hausdorff space. Show that $C(X)$, the algebra of continuous functions $f:X\to\bb C$ under the norm $\|f\|_{\infty} := \sup_x |f(x)|$ is nuclear. As a hint, use the fact that for every finite open cover $\cc O =\{O_1,\dots,O_n\}$ of $X$ there is a \emph{partition of unity} subordinate to $\cc O$. That is, there are continuous functions $f_i:X\to [0,1]$ with $f_i$ supported in $O_i$ and $\sum_{i=1}^n f_i(x) =1$ for all $x\in X$.
\end{exercise}

With the definition and basic examples established, we know turn to understanding the model theory of nuclear operator systems.

\begin{defn}
    Let $T$ be a theory of $\cc L$-structures, and let $x = (x_1,x_2,\dotsc,x_n)$ and $y=(y_1,y_2,\dotsc,y_n)$ be finite collections of symbols. Let $\cc F$ be a family of (definable) formulas for $T$, each depending only on $x$. We say that $\cc F$ is \emph{uniform} if there is a continuous function $u: \bb R^+\to \bb R^+$ with $u(0)=0$ so that
    \[ T\models |f(x) - f(y)|\leq u(\max_i d(x_i,y_i)),\ \textup{for all}\ f\in \cc F.\]
    
    For a class $\cc C$ of $\cc L$-structures so that $C \models T$ for all $C\in \cc C$, we say that $\cc C$ is \emph{definable by uniform families of formulas} if there is a collection of formulas $\{f_{n,k} : n,k=1,2,\dotsc\}$ defined on a countable number of symbols $x=(x_1,x_2,\dotsc)$ so that $\cc F_k := \{f_{n,k}: n=1,2,\dotsc\}$ is uniform for each $k$ and
    \[[E\models T\ \textup{and}\ \sup_{k}\inf_{n} f_{n,k}(x)^E \equiv 0]\ \textup{if and only if}\ E\in \cc C.\]
    
    We refer \cite[Section 5.7]{model-c-star} for the connection between definability by uniform families of formulas and being definable by omitting (partial) types.
\end{defn}

\begin{prop} \label{nuclear-omitting-types}
    The property that an operator system $E$ is nuclear is definable by uniform families of existential formulas.
\end{prop}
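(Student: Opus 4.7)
The plan is to build, for each $k, n \geq 1$, an existential predicate
\[
f_{n,k}(x_1, \ldots, x_k) := \inf\left\{ \max_{1 \leq i \leq k} \|\psi(\varphi(x_i)) - x_i\| : \varphi \in \ucp(E, M_n),\ \psi \in \ucp(M_n, E) \right\},
\]
and then to verify (i) that each $f_{n,k}$ is a uniform limit of existential formulas, (ii) that for each fixed $k$ the family $\{f_{n,k}\}_n$ is uniformly equicontinuous in $x$, and (iii) that $E$ is nuclear if and only if $\sup_k \inf_n f_{n,k}(x)^E \equiv 0$.

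For (i), I would encode $\psi$ by its Choi matrix $Y = \sum_{p,q} e_{pq}\otimes \psi(e_{pq}) \in M_n(E)^+$ with $\tr(Y) = 1$, which by the remark following Proposition \ref{prop:ucp-mn-e-def} presents $\ucp(M_n,E)$ as the zero set of a quantifier-free (positive) formula and under which $\psi(\varphi(x_i))$ becomes $\sum_{p,q} \varphi(x_i)_{pq}\, Y_{pq}$. I would encode the action of $\varphi$ by its values $(y_1, \ldots, y_k) = (\varphi(x_1), \ldots, \varphi(x_k)) \in \cc D_1(M_n)^k$, noting by Corollary \ref{cor:ucp-e-mn-def} that extendability of $x_i \mapsto y_i$ to an element of $\ucp(E, M_n)$ is quantifier-free definable. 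Letting $P_1(Y) := \dist(Y, \ucp(M_n, E))$ and $P_2(x, y) := \dist((x, y), S_{n,k})$ be the associated quantifier-free definable distance predicates, the formula
\[
\tilde f_{n,k}(x) := \inf_{y \in \cc D_1(M_n)^k,\, Y \in \cc D_1(E)^{n^2}} \Bigl[\, \max_i \Bigl\|x_i - \sum_{p,q} (y_i)_{pq}\, Y_{pq}\Bigr\| + P_1(Y) + P_2(x, y)\, \Bigr]
\]
is a uniform limit of existential formulas, and it coincides with $f_{n,k}$ because any $(y, Y)$ within distance $\epsilon$ of the two definable sets can be perturbed to an honest ucp-pair with only a small (at most $O(\epsilon^{1/2})$) additional error in the outer expression, by the explicit perturbation argument in the proof of Proposition \ref{prop:ucp-mn-e-def} together with the $\epsilon$-$\delta$ criterion of Exercise \ref{ex:definable-epsilon-delta}.

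For (ii), since every composition $\psi \circ \varphi$ of ucp maps is a contraction, the quantity $\max_i \|\psi\varphi(x_i) - x_i\|$ is $2$-Lipschitz in $x$ uniformly in $(\varphi, \psi)$, and infima preserve Lipschitz bounds, yielding $|f_{n,k}(x) - f_{n,k}(x')| \leq 2\max_i \|x_i - x_i'\|$ with a modulus independent of $n$. For (iii), nuclearity immediately delivers $\inf_n f_{n,k}(x)^E = 0$ for every finite tuple; conversely, if the $\sup\inf$ vanishes on every countable tuple drawn from $E$, then for each finite $F \subset E$ and each $\epsilon > 0$ we obtain $\varphi \in \ucp(E, M_n)$ and $\psi \in \ucp(M_n, E)$ with $\max_{x \in F} \|\psi\varphi(x) - x\| < \epsilon$, providing the net of approximating ucp factorizations indexed by pairs $(F, \epsilon)$ that the definition of nuclearity demands. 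The main obstacle is step (i): one must carefully justify that the penalty construction, combined with the approximate-to-exact perturbation, realizes the infimum over honest ucp-pairs as a definable existential predicate; everything else is essentially a bookkeeping exercise in the Choi--Effros parametrizations and the Lipschitz stability of $\inf$.
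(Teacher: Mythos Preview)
Your proposal is correct and follows essentially the same approach as the paper: the same formulas $f_{n,k}$, the same $2$-Lipschitz uniformity argument, and the same equivalence with nuclearity via the $(F,\e)$-net. The only difference is cosmetic---the paper simply cites Propositions \ref{prop:ucp-mn-e-def}, \ref{prop:state-def}, and Corollary \ref{cor:ucp-e-mn-def} and the general principle that one may quantify over definable sets, whereas you spell out the penalty construction explicitly; both routes justify the same definability claim.
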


\begin{proof}
    The formulas $f_{n,k}$ are given by 
    \begin{equation*}
        f_{n,k}(x_1,\dotsc,x_k) := \inf\left\{\max_i\|x_i - \psi\circ\vp(x_i)\| : \vp\in \ucp(E, M_n),\ \psi\in \ucp(M_n,E)\right\}
    \end{equation*}
    which are existential and definable by Propositions \ref{prop:ucp-mn-e-def} and \ref{prop:state-def} and Corollary \ref{cor:ucp-e-mn-def}. Since $\psi\circ\vp$ is a contraction, we have that each $f_{n,k}$ is $2$-Lipschitz, so the families $\cc F_k$ are uniform. Finally, if $\inf_n f_{n,k}(x)^E= 0$ for all tuples $x$ in $E$, we can easily construct nets $(\vp_i)$ and $(\psi_i)$ indexed over the directed set of pairs $(F,\e)$ where $F$ is a finite subset of $E$ and $\e>0$ so that $\psi_i\circ\vp_i$ converges to the identity on $E$ in the pointwise-norm topology, so $E$ is nuclear. Conversely, the existence of such a net easily implies that $\inf_n f_{n,k}^E\equiv 0$ for all $k$.
\end{proof}

\begin{question}
    For the elementary class of unital C$^*$-algebras, is nuclearity defined by a uniform family of \emph{positive} existential formulas?
\end{question}

Since the values of positive existential formulas decrease under surjective morphisms, this would recover the result due to Choi and Effros, \cite[Corollary 3.3]{choi-effros-duke} and \cite[Corollary 4]{choi-effros-iu}, that nuclear C$^*$-algebras are closed under taking C$^*$-algebra quotients avoiding Connes' \cite{Connes-inj} celebrated but difficult work on the classification of injective factors. However, as noted in Remark \ref{rmk:nuclear-quotient}, this seems like it would require substantially different ideas to avoid the false implication in the operator system category. We refer to \cite[Section 5.9]{model-c-star} for one such attempt.

\begin{defn}
    Let $E$ be a finite-dimensional operator system. We say that $E$ is \emph{exact} if for every $\e>0$ there is a unital, completely positive embedding $\vp: E\to M_n$ for some $n$ so that $\|\vp^{-1}|_{\vp(E)}\|_{\cb}\leq 1+\e$. We say that an operator system is \emph{exact} if every finite-dimensional subsystem is exact.
\end{defn}

\begin{remark} {}\
    \begin{enumerate}
        \item Every nuclear operator system is exact.
        
        \item It is clear from the definition that any subsystem of an exact operator system is exact. As a special case, every subsystem of $M_n$ is exact.
        
        \item Since every subsystem of any unital abelian $C^*$-algebra A is exact, $\MIN(V)$ is exact for any archimedean ordered $\ast$-vector space $V$.
        
        \item A difficult theorem of Kirchberg \cite{kirchberg-uhf} establishes that C$^*$-algebraic quotients of exact C$^*$-algebras are exact. This is again false in the operator system category by the same counterexample as given in Remark \ref{rmk:nuclear-quotient}.
    \end{enumerate}
\end{remark}

\begin{remark}
    In Vignati's article in this volume a finite-dimensional \emph{operator space}, that is, a closed subspace of $\cc B(H)$, is said to be $c$-exact for some $c\geq 1$ if for every $\e>0$ there is a completely contractive embedding $\vp: E\to M_n$ for some $n$ so that $\|\vp^{-1}|_{\vp(E)}\|_{\cb}\leq c+\e$.
    
    Let $E\subset \cc B(H)$ be a finite-dimensional operator space. Define an operator system $\tilde E\subset M_2(\cc B(H))$ by 
    \[ \tilde E := \left\{\begin{bmatrix} \la 1 & x\\ y^* & \mu 1\end{bmatrix} : x,y\in E,\ \la,\mu\in \bb C\right\}.\]
    We have that $E$ is $1$-exact as an operator space if and only if $\tilde E$ is an exact operator system. We leave this as an exercise based on the following ``$2\times 2$-matrix trick'' due to Paulsen: $\vp: E\to \cc B(K)$ is completely contractive if and only if $\tilde\vp: \tilde E\to M_2(\cc B(K))$ given by \[\tilde\vp\left(\begin{bmatrix} \la 1 & x\\ y^* & \mu 1\end{bmatrix}\right) := \begin{bmatrix} \la 1 & \vp(x)\\ \vp(y)^* & \mu 1\end{bmatrix}\] is (unital) completely positive. We refer the reader to \cite[Theorem B.5]{BrownOzawa} or \cite[Lemma 8.1]{paulsen2002completely} for a proof of this fact.
\end{remark}

We point out that exactness, similarly to nuclearity, can be phrased as in terms of local completely positive factorization of some (equivalently, every) \emph{inclusion} $E\subset \cc B(H)$ through matrix algebras, the crucial difference being that the images of the approximating sequence need not land back in the system itself.

\begin{prop} \label{prop:exact-cp}
    A finite-dimensional operator system $E\subset \cc B(H)$ is exact if and only if for each $\e>0$ there are unital, completely positive maps $\vp: E\to M_n$ and $\psi: \vp(E)\to \cc B(H)$ so that $\|\psi\circ\vp - \id_E\|< \e$ where $\id_E$ is the restriction of the identity map on $\cc B(H)$ to $E$.
\end{prop}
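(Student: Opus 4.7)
The plan is to prove the two directions separately. The reverse direction is an algebraic manipulation from the closeness of $\psi \circ \vp$ to $\id_E$. The forward direction is substantial and requires approximating the unital CB map $\vp^{-1}: \vp(E) \to \cc B(H)$ (with cb-norm close to $1$) by a UCP map, relying on the Wittstock--Paulsen--Haagerup dilation theory. I will interpret $\|\cdot\|$ as the cb-norm, which is the natural notion in the operator system category.

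For the reverse direction ($\Leftarrow$), let $\e \in (0,1)$ and take UCP maps $\vp: E \to M_n$ and $\psi: \vp(E) \to \cc B(H)$ from the hypothesis, so $\|\psi\circ\vp - \id_E\|_{\cb} < \e$. Then $\vp$ is injective since $\id_E$ is isometric. The identity $\psi - \vp^{-1} = (\psi\circ\vp - \id_E)\circ \vp^{-1}$ of maps $\vp(E)\to \cc B(H)$ gives $\|\psi - \vp^{-1}\|_{\cb} \leq \e\|\vp^{-1}\|_{\cb}$, so by the triangle inequality $\|\vp^{-1}\|_{\cb} \leq \|\psi\|_{\cb} + \|\psi - \vp^{-1}\|_{\cb} \leq 1 + \e\|\vp^{-1}\|_{\cb}$, yielding $\|\vp^{-1}\|_{\cb} \leq 1/(1-\e)$. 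Choosing $\e$ small enough makes this $\leq 1 + \e'$ for any prescribed $\e' > 0$, certifying exactness.

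For the forward direction ($\Rightarrow$), given $\e > 0$, pick a small $\delta > 0$ and use exactness to find UCP $\vp: E \to M_n$ with $\|\vp^{-1}|_{\vp(E)}\|_{\cb} \leq 1+\delta$. Extend the unital CB map $\theta := \vp^{-1}: \vp(E) \to \cc B(H)$ to $\tilde\theta: M_n \to \cc B(H)$ via Wittstock's extension theorem (preserving cb-norm). Apply the Wittstock--Paulsen--Haagerup dilation to write $\tilde\theta(x) = V^*\pi(x) W$ where $\pi: M_n \to \cc B(K)$ is a $\ast$-representation and $V, W \in \cc B(H, K)$ satisfy $\|V\|\|W\| \leq 1+\delta$ and $V^*W = I$ (from unitality); rescale so that $\|V\|^2 = \|W\|^2 \leq 1+\delta$. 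The identity $I = V^* W W^* V \preceq \|W\|^2 V^*V$ gives $V^*V \succeq I/(1+\delta)$, hence $\|V^*V - I\|, \|W^*W - I\| \leq \delta$, and then $\|V - W\|^2 = \|V^*V + W^*W - 2I\| \leq 2\delta$. Define the UCP map $\psi_1(x) := W'^*\pi(x) W'$ with $W' := V(V^*V)^{-1/2}$ an isometry. Routine estimates using $\|V - W\| \leq \sqrt{2\delta}$ and $\|I - (V^*V)^{-1/2}\| = O(\delta)$ yield $\|\psi_1 - \tilde\theta\|_{\cb} = O(\sqrt\delta)$. Setting $\psi := \psi_1|_{\vp(E)}$ gives $\|\psi\circ\vp - \id_E\|_{\cb} \leq \|\psi_1 - \tilde\theta\|_{\cb} \cdot \|\vp\|_{\cb} = O(\sqrt\delta)$, which is $< \e$ for $\delta$ small enough.

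The principal obstacle is the perturbation step in the forward direction---turning a unital cb-norm approximation into a UCP one---which relies on the dilation theory of completely bounded maps (Wittstock, Paulsen, Haagerup) that is not explicitly developed in this excerpt. Once that machinery is available, the computation above is short and the rest of the argument is straightforward.
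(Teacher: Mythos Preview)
Your forward direction is correct and in fact unfolds the content of the result the paper cites as a black box (\cite[Corollary B.9]{BrownOzawa}), which says precisely that a unital self-adjoint map with $\|\theta\|_{\cb}\leq 1+\de$ is $O(\de)$-close in cb-norm to a unital completely positive map. Your Wittstock--Paulsen--Haagerup argument is one standard route to that lemma, so here your approach and the paper's coincide up to the level of detail.

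The reverse direction, however, has a genuine gap stemming from your interpretation of $\|\cdot\|$. In this paper $\|\cdot\|$ denotes the operator norm, not the cb-norm: note that the paper's own forward-direction proof writes $\|\psi\circ\vp - \id_E\|\leq \|\psi\circ\vp - \id_E\|_{\cb}$, explicitly distinguishing the two. So the hypothesis you must work from is only $\|\psi\circ\vp - \id_E\|<\e$ in operator norm, and your identity $\psi-\vp^{-1}=(\psi\circ\vp-\id_E)\circ\vp^{-1}$ then yields only $\|\psi-\vp^{-1}\|_{\cb}\leq \e\|\vp^{-1}\|_{\cb}$ if you already know the cb-bound on the middle factor---which you do not. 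The paper closes this gap via Lemma~\ref{lem:os-dual-basis}: taking a hermitian basis $y_1,\dots,y_m$ of $\vp(E)$ with dual basis of norm one, the decomposition $(\psi-\vp^{-1})(z)=\sum_i y_i^*(z)\bigl(\psi(y_i)-\vp^{-1}(y_i)\bigr)$ gives $\|\psi-\vp^{-1}\|_{\cb}\leq \dim(E)\cdot\max_i\|\psi(y_i)-\vp^{-1}(y_i)\|$, converting the pointwise (operator-norm) hypothesis into the required cb-norm bound at the cost of a dimension constant. Your argument recovers the equivalence only for the cb-norm version of the statement, which is formally weaker on the $(\Leftarrow)$ side.
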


\begin{proof}
    We have that $\vp(E)\subset M_n$ and that $\vp^{-1}: \vp(E) \to \cc B(H)$ is unital and self-adjoint. By \cite[Corollary B.9]{BrownOzawa} there is a unital, completely positive map $\psi:\vp(E)\to \cc B(H)$ with $\|\psi-\vp^{-1}\|_{\cb}\leq 2(\|\vp^{-1}\|_{\cb}-1)$; hence, 
    \[\|\psi\circ\vp - \id_E\|\leq \|\psi\circ\vp - \id_E\|_{\cb} =\|\psi\circ\vp - \vp^{-1}\circ\vp\|_{\cb} \leq \|\psi-\vp^{-1}\|_{\cb}\|\vp\|_{\cb}\leq 2\e.\]
    
    In the other direction,  suppose there are unital, completely positive maps $\vp,\psi$ as above, and let $y_1,\dotsc,y_n,y_1^*,\dotsc,y_n^*$ be such a basis/dual basis pair for $\vp(E)$ as given in Lemma \ref{lem:os-dual-basis}. We have that
    \[(\psi - \vp^{-1})(z) = \sum_{i=1}^n y_i^*(z)(\psi(y_i) - \vp^{-1}(y_i)),\]
    so
    \[\|\psi - \vp^{-1}\|\leq \dim(E)\max_{i}\|\psi(y_i) - \vp^{-1}(y_i)\|.\]
    Setting $y_i = \vp(x_i)$ we have that \[\|x_i\|\leq \|\psi\circ\vp(x_i) - x_i\| + \|\psi\circ\vp(x_i)\|\leq \|\vp(x_i)\|+\e =1+\e;\] 
    hence, $\max_i \|\psi(y_i) - \vp^{-1}(y_i)\| = \max_i \|\psi\circ\vp(x_i) - x_i\|\leq (1+\e)\e$.
\end{proof}

Given the similarities between nuclearity and exactness it is natural to wonder if exact operator systems have a similar low-complexity description. A result of Goldbring and the author \cite{gs-omitting} shows that this is, however, not the case.

\begin{prop}
    The class of exact operator systems is not definable by uniform families of existential formulas.
\end{prop}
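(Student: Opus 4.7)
My plan is a proof by contradiction. Suppose the class of exact operator systems is definable by a uniform family of existential formulas $\{f_{n,k}\}$. The central conceptual observation is a structural dichotomy between nuclearity and exactness. Nuclearity (Proposition \ref{nuclear-omitting-types}) is witnessed by \emph{internal} approximate factorizations $E \xrightarrow{\vp} M_n \xrightarrow{\psi} E$, whereas exactness, by Proposition \ref{prop:exact-cp}, requires factorizations $E \xrightarrow{\vp} M_n \xrightarrow{\psi} \cc B(H)$ whose second map has codomain in the \emph{external} containing C$^*$-algebra. The language of operator systems contains sorts only for $E$, its amplifications $E_n$, the cones $C_n$, matrix algebras $M_{n,k}$, and scalars --- none of these gives access to such an external $\cc B(H)$. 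Consequently, any existential formula $f_{n,k}(x) = \inf_y \theta_{n,k}(x,y)$ can only quantify over witnesses internal to $E$.

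I would then argue that this internal character of the witnesses forces a collapse of exactness to nuclearity. Using the definability of $\ucp(E,M_n)$ (Proposition \ref{prop:ucp-mn-e-def}) and of matrix-valued states (Corollary \ref{cor:ucp-e-mn-def}), one would analyze the optimal internal existential witnesses for $f_{n,k}(x)^E$ being arbitrarily small and assemble them, via a compactness argument, into a UCP pair $\vp: E \to M_n$, $\psi: M_n \to E$ with $\psi \circ \vp$ approximating the identity on the tuple $x$. This would force exact $=$ nuclear for every operator system, which is false: for instance, the seven-dimensional subsystem of $M_3$ consisting of matrices with vanishing $(1,3)$ and $(3,1)$ entries (described in the discussion preceding Section \ref{sec:exact-nuclear}) is exact (trivially, as a subsystem of $M_3$) but was shown there to be non-nuclear.

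The principal obstacle is the assembly argument in the previous paragraph: in principle the internal existential witnesses for $f_{n,k}$ could be organized in a rather intricate fashion --- involving multiple auxiliary elements in various matrix amplifications --- not immediately reducible to a single UCP pair $(\vp,\psi)$. The rigorous execution likely requires a careful compactness or ultraproduct argument to extract the nuclear factorization, exploiting the closure of the hypothetical class under ultraproducts (automatic from Proposition \ref{prop:semantic-test}) together with the uniform continuity of the family $\{f_{n,k}\}_n$.

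An alternative strategy, which bypasses the collapse reduction, is to exploit the fact that exactness is a property concerning \emph{all} matrix amplifications simultaneously (via complete bounded isomorphism), while a single existential formula $f_{n,k}$ can only witness phenomena at the finitely many matrix levels appearing in $\theta_{n,k}$. One would aim to construct, for each hypothetical family $\{f_{n,k}\}$, an operator system whose partial ``exactness at low matrix levels'' cannot be extended to genuine exactness at all levels, realizing a non-exact structure that nonetheless satisfies $\sup_k \inf_n f_{n,k} \equiv 0$; this is the approach I would expect in \cite{gs-omitting}.
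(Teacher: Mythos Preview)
The paper does not actually contain a proof of this proposition; it merely cites \cite{gs-omitting} and states the result. So there is no ``paper's own proof'' to compare against. What I can do is assess whether your proposal stands on its own.

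Your primary approach has a genuine conceptual gap. The hypothesis you are trying to contradict is that exactness is definable by \emph{some} uniform family $\{f_{n,k}\}$ of existential formulas. This places no constraint whatsoever on the \emph{form} of the $f_{n,k}$: the existential witnesses $y$ in $f_{n,k}(x)=\inf_y \theta_{n,k}(x,y)$ are arbitrary tuples from (matrix amplifications of) $E$, and $\theta_{n,k}$ may encode any quantifier-free relation between $x$ and $y$. There is no reason a priori that these witnesses should be organizable into, or even related to, a pair $(\vp,\psi)$ of ucp maps factoring the identity through a matrix algebra. Your ``assembly argument'' therefore has nothing to assemble: you would first need to prove a representation theorem to the effect that \emph{every} uniform existential family capturing exactness is, up to definable equivalence, the nuclear-factorization family of Proposition~\ref{nuclear-omitting-types}. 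That is a far stronger statement than what you are assuming, and your sketch gives no mechanism for establishing it. The compactness/ultraproduct maneuver you allude to would at best produce limits of the witnesses $y$, not reinterpret them as factorization data. You yourself flag this obstacle, but the proposal does not close it.

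Your second strategy --- exploiting that each $f_{n,k}$ only sees finitely many matrix levels --- is closer in spirit to how such non-definability results are typically proved, but as written it is a heuristic rather than an argument: you have not specified which closure property of ``definable by uniform existential families'' is being violated, nor exhibited the operator system witnessing the violation. The actual argument in \cite{gs-omitting} proceeds by identifying a concrete semantic obstruction (a closure property that such classes must enjoy) and producing an explicit counterexample showing exactness fails it; you would need to carry out both of those steps.
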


\begin{question}
    Is the class of exact C$^*$-algebras definable by uniform families of (positive, existential) formulas?
\end{question}

\subsection{The Lifting Property}

\begin{defn}
    An operator system $E$ has the \emph{lifting property} (LP) if for every unital C$^*$-algebra $A$ and every ideal $I$, for every unital, completely positive map $\vp: E\to A/I$ there exists a unital, completely positive map $\tilde\vp: E\to A$ lifting $\vp$, that is, $q\circ \tilde\vp = \vp$ where $q: A\to A/I$ is the quotient map.
\end{defn}

The following result is due to Robertson and Smith \cite{RobertsonSmith}.

\begin{prop} \label{prop:robertson-smith}
    Let $E$ be a finite-dimensional operator system and $B/J$ be a quotient C$^*$-algebra. For each $n=1,2,\dotsc$ every unital, completely positive map $\vp: E\to B/J$ admits a unital $n$-positive lifting $\tilde\vp: E\to B$.
\end{prop}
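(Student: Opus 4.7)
My plan is to construct $\tilde\vp$ by perturbing a self-adjoint unital linear lift of $\vp$ using a quasi-central approximate unit for $J$ in $B$, together with a completely positive correction term chosen to enforce $n$-positivity.

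First, finite-dimensionality of $E$ furnishes a self-adjoint unital linear lift $\psi_0 : E \to B$ of $\vp$: fix a hermitian basis $1 = e_1, \ldots, e_d$ of $E$, lift each $\vp(e_i)$ to a self-adjoint $b_i \in B$ with $b_1 = 1_B$, and extend linearly. Fix also a UCP map $\rho : E \to B$ (any state $\omega$ on $E$ gives one via $\rho(x) := \omega(x)\, 1_B$, which is CP by Lemma~\ref{lem:state-cp}), and let $(e_\lambda)_\lambda$ be a quasi-central approximate unit for $J$ in $B$. For each $\lambda$, define
\[ \tilde\vp_\lambda(x) := (1 - e_\lambda)^{1/2}\, \psi_0(x)\, (1 - e_\lambda)^{1/2} + e_\lambda^{1/2}\, \rho(x)\, e_\lambda^{1/2}, \]
which is self-adjoint, satisfies $\tilde\vp_\lambda(1) = (1-e_\lambda) + e_\lambda = 1_B$, and lifts $\vp$ (as $e_\lambda \in J$). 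The task reduces to choosing $\rho$ and $\lambda$ so that $\tilde\vp_\lambda$ is $n$-positive.

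To see $n$-positivity for $\lambda$ large, fix $X \in M_n(E)^+$ and set $Y := (\psi_0)_n(X)$. Since $q_n(Y) = \vp_n(X) \succeq 0$, the negative part $Y_-$ lies in $M_n(J)$; quasi-centrality together with the approximate-unit property in $M_n(J) \subset M_n(B)$ then yield $\|(I_n \otimes (1-e_\lambda)^{1/2})\, Y_-\, (I_n \otimes (1-e_\lambda)^{1/2})\| \to 0$. This convergence is uniform over $X$ in the unit ball of $M_n(E)^+$, which is compact by finite-dimensionality of $E$ and hence of $M_n(E)$. Thus $(\tilde\vp_\lambda)_n(X)$ decomposes as a sum of two manifestly positive terms (from $Y_+$ and from $\rho_n$) minus a negative error vanishing uniformly, reducing the problem to a uniform dominance claim.

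The principal obstacle is precisely this dominance step: the correction $(I_n \otimes e_\lambda^{1/2})\, \rho_n(X)\, (I_n \otimes e_\lambda^{1/2})$ can itself degenerate when $X \in M_n(E)^+$ is supported on a rank-deficient subspace of the $M_n$ factor, so the naive scalar choice $\rho(x) = \omega(x)\, 1_B$ is insufficient. The remedy is to enrich $\rho$---for instance by taking a convex combination of conjugations ensuring that $\rho_n$ has image with full matricial spread on every nonzero $X \in M_n(E)^+$---and then to balance the damping rate of the approximate unit against the quantitative lower bound this enriched correction provides, uniformly across the compact unit sphere of $M_n(E)^+$. This delicate quantitative balancing is the technical heart of Robertson--Smith's argument, and it crucially leverages finite-dimensionality of $E$ and the consequent compactness of the relevant subsets of $M_n(E)$; without finite-dimensionality the required uniform damping of $Y_-$ would fail.
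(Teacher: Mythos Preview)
The paper does not supply a proof of this proposition; it merely attributes the result to Robertson and Smith and proceeds directly to the corollary. So there is no in-paper argument to compare against, and your proposal must be judged on its own.

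Your setup is standard and correct as far as it goes: the self-adjoint unital linear lift $\psi_0$ exists by finite-dimensionality, the quasi-central approximate unit gives the right damping mechanism, the observation that $Y_-\in M_n(J)$ follows from preservation of continuous functional calculus under the surjective $\ast$-homomorphism $q_n$, and the uniform vanishing of the conjugated negative part over the compact unit ball of $M_n(E)^+$ is valid.

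But you explicitly stop at the hard step. To obtain $n$-positivity for a \emph{single} $\lambda$ you need, for every $X$ in the unit ball of $M_n(E)^+$,
\[
(I_n\otimes(1-e_\lambda)^{1/2})\,Y_+\,(I_n\otimes(1-e_\lambda)^{1/2}) + (I_n\otimes e_\lambda^{1/2})\,\rho_n(X)\,(I_n\otimes e_\lambda^{1/2}) \;\succeq\; (I_n\otimes(1-e_\lambda)^{1/2})\,Y_-\,(I_n\otimes(1-e_\lambda)^{1/2}),
\]
and you have only shown that the right-hand side is uniformly small in norm, not that the left-hand side dominates it. Your proposed remedy cannot work as stated: whatever unital completely positive $\rho:E\to B$ you pick, the amplification is $\rho_n=\id_{M_n}\otimes\,\rho$, so for $X=p\otimes x$ with $p\in M_n$ a rank-one projection one always has $\rho_n(X)=p\otimes\rho(x)$, rank-deficient in the $M_n$ factor regardless of any ``convex combination of conjugations'' performed on the $E$ side. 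The sentence about ``balancing the damping rate against the quantitative lower bound'' is a description of what a proof would have to accomplish, not a proof. As written, this is a sketch that correctly identifies the obstacle and then defers its resolution to the cited reference rather than overcoming it.
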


The following is immediate from Definition \ref{defn:maximal}.

\begin{cor}
    Every finite-dimensional, $k$-maximal operator system has the lifting property. 
\end{cor}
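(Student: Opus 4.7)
The plan is to combine Proposition 2.41 (Robertson--Smith) with the definition of $k$-maximality in a direct way: given a completely positive lifting problem into an arbitrary quotient $A/I$, one first applies Robertson--Smith to produce a unital $k$-positive lift into $A$, and then uses $k$-maximality to upgrade this $k$-positive lift to a completely positive one.

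In more detail, let $E$ be a finite-dimensional, $k$-maximal operator system, and let $\varphi: E \to A/I$ be a unital, completely positive map, where $I\subset A$ is an ideal and $q: A\to A/I$ is the quotient map. By Proposition \ref{prop:robertson-smith} applied with $n = k$, there exists a unital, $k$-positive map $\tilde\varphi: E \to A$ satisfying $q\circ\tilde\varphi = \varphi$. To invoke $k$-maximality (Definition \ref{defn:maximal}), I need $\tilde\varphi$ to land in some $\mathcal{B}(H)$; this is achieved by taking any faithful unital $*$-representation $\pi: A \hookrightarrow \mathcal{B}(H)$ (e.g.\ the universal GNS representation) and considering $\pi\circ\tilde\varphi: E \to \mathcal{B}(H)$, which is still unital and $k$-positive. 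By the assumption that $E$ is $k$-maximal, $\pi\circ\tilde\varphi$ is completely positive. Since $\pi$ is a faithful unital $*$-homomorphism onto its image, a map into $A$ is completely positive if and only if its composition with $\pi$ is completely positive into $\mathcal{B}(H)$; hence $\tilde\varphi: E \to A$ is itself unital and completely positive, and it lifts $\varphi$.

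There is essentially no obstacle here: both key inputs (Robertson--Smith and the definition of $k$-maximality) are already in place, and the only thing to verify is the compatibility of complete positivity with composition by a faithful $*$-representation, which is immediate from the fact that $\pi_n := \mathrm{id}_{M_n}\otimes\pi$ is an order embedding at each matrix level. Thus the corollary follows by stringing these observations together in one short paragraph, with no additional analytic estimates or constructions required.
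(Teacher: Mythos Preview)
Your argument is correct and is exactly what the paper intends: it simply records the corollary as ``immediate from Definition~\ref{defn:maximal}'' after stating Proposition~\ref{prop:robertson-smith}, and your write-up just makes explicit the one-line deduction (including the harmless passage through a faithful representation to match the $\cc B(H)$-valued form of the definition).
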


\begin{defn}
    A finite-dimensional operator system $E$ is said to be \emph{CP-stable} if for every $\e>0$ there exist $\delta>0$ and $n$ so that for any unital map $\vp: E\to A$ into any C$^*$-algebra $A$ satisfying $\|\vp\|_n\leq 1 + \de$, there is a unital, completely positive map $\vp': E\to A$ so that $\|\vp' - \vp\|\leq \e$. 
\end{defn}

Parts of the following result are due to Kavruk \cite[Theorem 6.6]{Kavruk2014} and Goldbring and the author \cite[Proposition 2.42]{gs-kirchberg}, \cite[Section 7]{gs-omitting}, as well as \cite{sinclair-cp}.

\begin{thm} \label{thm:lp-cp-stable}
     For a finite-dimensional operator system $E$, the following are equivalent:
     \begin{enumerate}
         \item $E$ has the lifting property;
         \item $E^*$ is exact;
         \item $E$ is CP-stable;
         \item for every sequence $(A_i)$ of unital C$^*$-algebras and every non-principal ultrafilter $\cc U$, every unital, completely positive map $\vp: E\to \prod_{\cc U} A_i$ admits a unital, completely positive lifting $\tilde\vp: E\to \prod A_i$;
         \item for every sequence $(M_{n_i})$ of matrix algebras and every non-principal ultrafilter, $\cc U$ every unital, completely positive map $\vp: E\to \prod_{\cc U} M_{n_i}$ admits a unital, completely positive lifting $\tilde\vp: E\to \prod M_{n_i}$.
     \end{enumerate}
\end{thm}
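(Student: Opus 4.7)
The plan is to prove all equivalences via the cycle $(1) \Rightarrow (4) \Rightarrow (5) \Rightarrow (2) \Rightarrow (3) \Rightarrow (1)$, supplemented by a direct $(4) \Rightarrow (3)$ for convenience. The easy implications $(1) \Rightarrow (4)$ (apply LP to the ideal $\cc J = \ker(\prod A_i \to \prod_{\cc U} A_i)$ of $\cc U$-null sequences) and $(4) \Rightarrow (5)$ (matrix algebras are unital C$^*$-algebras) are immediate.

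For $(4) \Rightarrow (3)$, I argue by contraposition. Failure of CP-stability yields $\e_0 > 0$ and unital maps $\vp_n \colon E \to A_n$ satisfying $\|\vp_n\|_n \leq 1 + 1/n$ with $\dist(\vp_n, \ucp(E, A_n)) \geq \e_0$. Assemble $\vp \colon E \to \prod_{\cc U} A_n$ by $\vp(x) := (\vp_n(x))_{\cc U}$. For each fixed $k$,
\[ \|\vp\|_k \leq \lim_{\cc U} \|\vp_n\|_k \leq \lim_{\cc U} \|\vp_n\|_n \leq \lim_{\cc U}(1 + 1/n) = 1, \]
using $\|\vp_n\|_k \leq \|\vp_n\|_n$ on the $\cc U$-generic set $\{n \geq k\}$. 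Thus $\vp$ is a unital complete contraction, hence UCP by Propositions \ref{ex:contraction-positive} and \ref{prop:contraction-positive}. Applying (4) yields a UCP lift $\tilde\vp \colon E \to \prod A_n$ whose coordinate components $\psi_n := \pi_n \circ \tilde\vp$ are UCP and satisfy $\lim_{\cc U} \|\psi_n(x) - \vp_n(x)\| = 0$ for every $x \in E$. Lemma \ref{lem:os-dual-basis} furnishes a hermitian basis $x_1, \dotsc, x_d$ of $E$ with unit-norm dual basis, whence $\|\eta\| \leq d \max_i \|\eta(x_i)\|$ for every linear $\eta \colon E \to A_n$. Applied to $\eta = \psi_n - \vp_n$, this contradicts the $\e_0$-separation on a $\cc U$-generic set.

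For $(3) \Rightarrow (1)$, given UCP $\vp \colon E \to A/I$, Proposition \ref{prop:robertson-smith} supplies unital $2n$-positive lifts $\tilde\vp_n \colon E \to A$. The amplification $(\tilde\vp_n)_n \colon M_n(E) \to M_n(A)$ is unital and $2$-positive, hence contractive by Exercise \ref{ex:unital-2-positive-contraction}, so $\|\tilde\vp_n\|_n \leq 1$. Choosing $n$ so large that CP-stability with $\e = 1/n$ is witnessed at matrix level $\leq n$, I obtain UCP $\vp'_n \colon E \to A$ with $\|\vp'_n - \tilde\vp_n\| \leq 1/n$; then $q\vp'_n \to \vp$ in norm on $E$. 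The classical Arveson--Choi--Effros theorem upgrading approximate UCP liftings into exact UCP liftings, via a quasi-central approximate unit for $I$ and an inductive gluing, then delivers the desired UCP lift $\tilde\vp \colon E \to A$.

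The main obstacle is the duality $(5) \Rightarrow (2) \Rightarrow (3)$, i.e., the assertion that LP for finite-dimensional $E$ is equivalent to exactness of $E^*$. The strategy, following Kavruk, exploits the adjoint bijection $\cp(E^*, M_n) \cong \cp(M_n, E)$ afforded by finite-dimensionality and the trace pairing: exactness data for $E^*$ — UCP embeddings $\iota \colon E^* \hookrightarrow M_n$ with $\|\iota^{-1}\|_\cb$ close to $1$ — transport under the adjoint into approximate completely positive factorizations of $\id_E$ through matrix algebras, which furnish the UCP liftings into matrix algebra ultrapowers demanded by (5), and conversely allow one to extract such embeddings from the lifting property via a compactness argument applied to candidate liftings of a universal UCP map. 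The analytical core lies in reconciling the adjoint's swap of the unital and trace-preserving roles — managed by combining the bijection with the unitalization of Lemma \ref{lem:cp-to-ucp} and the basis construction of Lemma \ref{lem:os-dual-basis} — and in carefully propagating perturbation constants through the duality; this is the technical substance of \cite[Theorem 6.6]{Kavruk2014} together with \cite[Proposition 2.42]{gs-kirchberg} and \cite[Section 7]{gs-omitting}.
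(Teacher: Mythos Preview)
The paper does not actually prove this theorem: it is stated with attribution to \cite[Theorem 6.6]{Kavruk2014}, \cite[Proposition 2.42]{gs-kirchberg}, \cite[Section 7]{gs-omitting}, and \cite{sinclair-cp}, and then used as a black box. Your proposal therefore goes well beyond what the paper itself supplies, and the portions you spell out in detail --- $(1)\Rightarrow(4)\Rightarrow(5)$, the contrapositive $(4)\Rightarrow(3)$, and $(3)\Rightarrow(1)$ via Robertson--Smith plus the Arveson gluing argument --- are correct and are indeed the standard arguments appearing in the cited sources.

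The one point to flag is that your final paragraph on $(5)\Rightarrow(2)\Rightarrow(3)$ is not a proof but a description of the strategy, ultimately deferring to the same references the paper cites. This is fine as a survey-level sketch and matches the paper's own level of detail, but you should be aware that without this link your cycle does not close: what you have actually established is $(1)\Leftrightarrow(3)\Leftrightarrow(4)\Rightarrow(5)$, and bringing $(5)$ back into the equivalence --- and including $(2)$ at all --- genuinely requires the exactness/lifting duality, which is the substantive content of Kavruk's theorem. In particular the contrapositive argument you gave for $(4)\Rightarrow(3)$ does not immediately adapt to yield $(5)\Rightarrow(3)$, since the witnesses $A_n$ to the failure of CP-stability are arbitrary unital C$^*$-algebras rather than matrix algebras.
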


\begin{remark}
    Let $E$ be a finite dimensional operator system with a hermitian basis $\{1=x_0,x_1,\dotsc,x_n\}$, and consider the elementary class $\cc C$ of all operator systems which are unital C$^*$-algebras \cite{gs-axiom}. Consider the uniform assignment on $\cc C$ given by \[L_E(A) := \{(a_1,\dotsc,a_n)\in (A^h)^n : \vp(x_i) = a_i,\ i=1,\dotsc,n,\ \textup{for some}\ \vp\in \ucp(E,A)\}.\] Then the equivalence $(1)\Leftrightarrow (4)$ shows that $E$ has the lifting property if and only if $L_E$ is definable.
\end{remark}

Theorem \ref{thm:lp-cp-stable} allows one to give an interesting interpretation of the lifting property for certain maximal-type operator systems in terms of definability via Proposition \ref{prop:lmi-beth}. 

\begin{example} \label{ex:free-op-sys}
    Let $p(x)$ with $x=(x_1,\dots,x_n)$ be a homogeneous, hermitian linear matrix $\ast$-polynomial of degree $d$. Consider the closed convex set 
        \[ K := \{x\in \bb R^n : 1_d\succeq p(x).\}\]
    (Any convex set of this form is known as a \emph{spectrahedron}.) Suppose that $K$ is bounded with $0$ as an interior point. Recalling the construction from Remark \ref{rmk:cone} of the archimedean ordered $\ast$-vector space $V_K = (\bb C^{n+1}, P_K, e)$ from $K$, we define an operator system structure on $V_K$ by defining $K_\ell = \ucp(V_K,M_\ell)$ to be given by all linear maps $f:\bb R^{n+1}\to M_\ell^h$ with $f(e_1) = I_{\ell}$ so that the tuple $X = (X_1,\dotsc,X_n)$ given by $X_i := f(e_{i+1})$ satisfies
        \[1_d\otimes 1_\ell \succeq p(X)\]
    where $e_1,e_2,\dotsc,e_{n+1}$ form the standard basis for $\bb R^{n+1}$. (Note the order unit $e$ is $e_1$.)
    The collection of sets $(K_\ell)_{\ell\in\bb N}$ is known as the \emph{free spectrahedron} defined by $p(x)$ \cite{kriel}. Consequently, we refer to the operator system structure we have just constructed on $V_K$ as the \emph{free operator system} generated by $p$, which we will denote by $\cc F(p)$.
\end{example}

\begin{prop}
    Using the same assumptions and notation as Example \ref{ex:free-op-sys}, the free operator system $\cc F(p)$ has the lifting property.
\end{prop}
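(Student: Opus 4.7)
The plan is to apply Theorem~\ref{thm:lp-cp-stable}, specifically the equivalence $(1)\Leftrightarrow(5)$. Fix a sequence $(M_{n_i})$ of matrix algebras, a non-principal ultrafilter $\cc U$, and a unital completely positive map $\vp\colon \cc F(p)\to \prod_\cc U M_{n_i}$; I must produce a UCP lift $\tilde\vp\colon \cc F(p)\to \prod M_{n_i}$.

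First I would unpack the universal property of $\cc F(p)$: for any unital operator system $B$, UCP maps $\vp\colon\cc F(p)\to B$ correspond bijectively to hermitian tuples $X = (X_1,\dotsc,X_n)\in (B^h)^n$ satisfying the LMI $1_d\otimes 1_B\succeq p(X)$, via $X_i = \vp(e_{i+1})$. For matrix codomains this is how $\cc F(p)$ is defined; the extension to arbitrary $B$ is obtained by embedding $B$ completely order isomorphically into a direct product of matrix algebras as in Remark~\ref{rmk:op-subsys-matrix-product}, checking that the LMI descends to each coordinate, and invoking the universal property in each factor. Applied to $B = \prod_\cc U M_{n_i}$, this encodes $\vp$ as a hermitian tuple $X\in (B^h)^n$ with $1_d\otimes 1\succeq p(X)$, and the construction of $\tilde\vp$ is equivalent to finding representatives $\tilde X^{(i)}\in (M_{n_i}^h)^n$ satisfying the LMI coordinate-wise.

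The heart of the argument is Proposition~\ref{prop:lmi-beth}: the uniform assignment
\[
F(A) := \{Y\in (A^h)^n : 1_d\otimes 1_A\succeq p(Y)\}
\]
is definable on the class of operator systems. By the very definition of definability, this means $F(\prod_\cc U M_{n_i}) = \prod_\cc U F(M_{n_i})$, so the tuple $X$ admits a representative $(\tilde X^{(i)})_i$ with $\tilde X^{(i)}\in F(M_{n_i})$ for $\cc U$-generic $i$. To upgrade this to a representative defined at \emph{every} index, I use that $p$ is homogeneous: $p(0) = 0\preceq 1_d\otimes 1_{n_i}$, so $0\in F(M_{n_i})$ for all $i$. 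Replacing $\tilde X^{(i)}$ by $0$ on the set of indices not in $\cc U$ produces a tuple $\tilde X\in\prod_i F(M_{n_i})$ that still represents $X$ modulo $\cc U$ and satisfies the LMI in each coordinate. The associated map $\tilde\vp\colon\cc F(p)\to\prod M_{n_i}$ is then UCP by the universal property applied coordinate-wise, and by construction reduces to $\vp$ after quotienting by $\cc U$.

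The main technical step I expect to take care with is the extension of the universal property of $\cc F(p)$ from matrix codomains to general operator system codomains (in particular to products and ultraproducts of matrix algebras). Once this is in place, the rest of the argument is a direct application of the definability of linear matrix inequalities together with the homogeneity of $p$; definability supplies the ultraproduct-level representative, and homogeneity allows us to patch it into a product-level lift.
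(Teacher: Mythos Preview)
Your proposal is correct and follows essentially the same route as the paper: both arguments verify condition~(5) of Theorem~\ref{thm:lp-cp-stable} by translating a UCP map into the ultraproduct into a hermitian tuple satisfying the LMI, and then invoking Proposition~\ref{prop:lmi-beth} (definability of LMI solution sets) to produce coordinate-wise solutions which assemble into the desired lift. The only substantive difference is how the universal property of $\cc F(p)$ is extended beyond matrix codomains: the paper does this directly for $\cc B(H)$ by compressing to finite-dimensional subspaces and passing to a strong-operator-topology limit, whereas you route through the embedding of Remark~\ref{rmk:op-subsys-matrix-product} into a product of matrix algebras. Both arguments are valid; yours is a bit more structural while the paper's is a bit more hands-on, but the core mechanism---definability of the LMI plus homogeneity of $p$---is identical.
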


\begin{proof}
    Let $\vp: \cc F(p)\to \cc B(H)$ be a unital, completely positive map, and define $X_i := \vp(e_{i+1})$ for $i=1,\dotsc,n$ as above. Since $\cc F(p)$ is finite-dimensional, we may assume without loss of generality that $H$ is separable. (This is just to avoid the slight inconvenience of working with nets rather than sequences, but the subsequent arguments translate to nets practically verbatim.)  Let $H_1\subset H_2\subset \dotsb H$ be an increasing sequence of finite-dimensional subspaces whose union is dense in $H$, and let $p_j: H\to H_j$ be the orthogonal projection onto $H_j$. We have that $\vp^{(j)}: \cc F(p)\to M_{m_j}$ defined by $\vp^{(j)}(x) := p_j \vp(x) p_j$ is unital, completely positive; thus, for the tuple $X^{(j)} = (X_1^{(j)},\dotsc, X_n^{(j)})$ given by $X_i^{(j)} := \vp^{(j)}(e_{i+1}) = p_j \vp(e_{i+1}) p_j$ we have that $X^{(j)}$ is hermitian and $1_d\otimes 1_{m_j}\succeq p(X^{(j)})$ holds. Since $X^{(j)}$ converges to $X$ in the strong operator topology, $p(X^{(j)})$ also converges strongly to $p(X)$, so $1_d\otimes 1_H\succeq p(X)$ as well.
    
    Now let $\vp: \cc F(p)\to \prod_{\cc U} M_{m_j}$ be a unital, completely positive map. Since $\cc F(p)$ is finite-dimensional, we may assume that $\vp = (\vp_j)_\cc U$ where $\vp_j: \cc F(p)\to M_{m_j}$ is unital, $\ast$-linear, and sends $\bb R^{n+1}$ into $M_{m_j}^h$. Setting $Y_i^{(j)} := \vp_j(e_{i+1})$, we have that 
    \[\dist(1_d\otimes 1_{m_j} - p(Y^{(j)}), M_{dm_j}^+)<\e\ \textup{for}\ j\in \cc U\ \textup{generic.}\] 
    By Proposition \ref{prop:lmi-beth}, we may perturb the tuple $Y^{(j)}$ slightly to $Z^{(j)}$ so that $\vp_j'(e_{i+1}) := Z_i^{(j)}$ defines a unital completely positive map. We have that $\tilde\vp = (\vp_j'): \cc F(p)\to \prod M_{m_j}$ is the required lifting. By the equivalence of (1) and (5) in Theorem \ref{thm:lp-cp-stable}, the result obtains. \qedhere
\end{proof}

\begin{remark}
    Let $q(x)$ be a homogenous, hermitian linear operator $\ast$-polynomial, and assume that the closed, convex set $K = \{x\in \bb R^n : 1_H\succeq q(x)\}$ is bounded and contains $0$ as an interior point. Then, we may construct an operator system $\cc F(q)$ in exactly the same way as we did in Example \ref{ex:free-op-sys}, which we will still refer to as the \emph{free operator system} generated by $q$. The proof of the previous proposition applies equally well to this situation, using now Proposition \ref{prop:lmi-beth-op}, so $\cc F(q)$ is seen to have the lifting property. Since every closed, convex set $K$ arises in this way by combining all linear inequalities it satisfies into a single linear operator $\ast$-polynomial, this recovers that all maximal operator systems are CP-stable. (The equivalence of CP-stability and the lifting property in Theorem \ref{thm:lp-cp-stable} itself uses Proposition \ref{prop:robertson-smith}.)
\end{remark}

\subsection{The Weak Expectation Property}

\begin{defn} \label{defn:wep}
    We say that an operator system $E$ has the \emph{weak expectation property} if for every unital inclusion $E\subset B$ into another operator system, there is a unital, completely positive map $\vp: B\to E^{**}$ so that $\vp(x) = \hat x$ for all $x\in E$. (Recall that $\hat\cdot : E\to E^{**}$ is the evaluation map as in Remark \ref{rem:archimedean} and Proposition \ref{prop:E-dual-dual}.)
\end{defn}

\begin{lem} \label{lem:weak-wep}
    An operator system $E$ has the weak expectation property if and only if for every unital inclusion $E\subset B$ and each hermitian $b\in B^h$, there is a unital, completely positive map $\vp: E+\bb Cb\to E^{**}$ so that $\vp(x)=\hat x$ for all $x\in E$.
\end{lem}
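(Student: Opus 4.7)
The forward direction is immediate: if $E$ has WEP, then given an inclusion $E\subset B$ and $b\in B^h$, the UCP map $B\to E^{**}$ provided by WEP restricts to the operator subsystem $E+\bb Cb$ to give the required extension of $\hat\cdot$.

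For the reverse implication, assume the single-element extension hypothesis and fix an inclusion $E\subset B$. The plan is to construct a UCP map $\Phi:B\to E^{**}$ extending $\hat\cdot$ by combining an iterated use of the hypothesis with a compactness argument carried out inside $E^{**}$ itself. The first stage of the plan is to prove by induction on $k$ that for every finite subset $\{c_1,\ldots,c_k\}\subset B^h$ there exists a UCP map $\vp_k:E+\spn_\bb C\{c_1,\ldots,c_k\}\to E^{**}$ extending $\hat\cdot$. The base case $k=1$ is precisely the hypothesis; in the inductive step one must, starting from $\vp_k$ together with a hypothesis-supplied single-element extension of $\hat\cdot$ by $c_{k+1}$, produce some UCP map on $E+\spn\{c_1,\ldots,c_{k+1}\}$ extending $\hat\cdot$ (not necessarily extending $\vp_k$ itself).

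For the second stage I would consider the product space $\prod_{c\in B^h}\bigl(\|c\|\cdot(E^{**})_1^h\bigr)$, compact in the product of weak*-topologies by Banach--Alaoglu and Tychonoff. For each finite $F\subset B^h$, let $X_F$ consist of the tuples $(v_c)_{c\in B^h}$ such that the affine map $x+\sum_{c\in F}\lambda_c c\mapsto\hat x+\sum_{c\in F}\lambda_c v_c$ defines a UCP map from $E+\spn F$ to $E^{**}$ (the values $v_c$ for $c\notin F$ being unconstrained). Each $X_F$ is non-empty by the first stage and is closed by closedness of matrix positivity conditions; moreover $X_{F_1}\cap\cdots\cap X_{F_n}\supset X_{F_1\cup\cdots\cup F_n}$, so the family $\{X_F\}_F$ enjoys the finite intersection property. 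Compactness then yields a tuple $(v_c)\in\bigcap_FX_F$, and setting $\Phi(c):=v_c$ on $B^h$ and extending $\bb C$-linearly defines a UCP map $\Phi:B\to E^{**}$ extending $\hat\cdot$, since any matrix-positivity assertion about $\Phi$ involves only finitely many elements of $B$ and is witnessed by UCP-ness of the corresponding $\vp_F$.

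The hard part will be the inductive step in the first stage: the hypothesis produces extensions of the specific map $\hat\cdot$ rather than of the more general UCP $\vp_k$, and a naive patching of $\vp_k$ with a hypothesis-supplied single-element extension of $\hat\cdot$ at $c_{k+1}$ is only linear and in general fails to be completely positive on $E+\spn\{c_1,\ldots,c_{k+1}\}$. I expect the resolution to proceed either through a convex-analytic selection argument within the set of admissible extension values at $c_{k+1}$, or by bypassing the induction entirely via a single application of the hypothesis to a suitably enlarged ambient inclusion, for instance by amplifying $E\subset B$ to $M_k\otimes E\subset M_k\otimes B$ and reading off an extension on $E+\spn\{c_1,\ldots,c_k\}$ from an appropriate single-element extension in the amplified system.
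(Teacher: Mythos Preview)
Your overall architecture is right and matches the paper: an induction on the number of added hermitian elements, followed by a compactness/cluster-point argument to pass to the limit. Your second stage (Tychonoff in the product of weak*-balls) is a perfectly good substitute for the paper's pointwise-weak* cluster point over the net of finite-dimensional subsystems.

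The gap is exactly where you flag it: the inductive step. Neither of your two speculative resolutions is the one that works, and the first (a convex-analytic selection among admissible values of $v_{c_{k+1}}$) runs into the very obstruction you identify, since the hypothesis only produces extensions of $\hat\cdot$ and not of $\vp_k$. The amplification idea is also problematic: the hypothesis is about $E$, not about $M_k(E)$, and there is no reason the single-element extension property should pass to matrix levels without already knowing WEP.

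The missing idea is to \emph{change the ambient inclusion}. Represent $E^{**}\subset\cc B(H)$ and use Arveson's extension theorem to extend $\vp_k$ to a unital completely positive map $\vp':B\to\cc B(H)$. Now $E\cong\widehat E\subset\cc B(H)$ is itself a unital inclusion of operator systems, and $d:=\vp'(c_{k+1})\in\cc B(H)^h$. Apply the hypothesis to \emph{this} inclusion at the single element $d$ to obtain a unital completely positive $\psi:E+\bb Cd\to E^{**}$ with $\psi|_E=\hat\cdot$. Extend $\psi$ weak*-continuously to $\psi':E^{**}+\bb Cd\to E^{**}$; since $\psi'$ is the identity on $\widehat E$ and weak*-continuous, it is the identity on $E^{**}$. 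Because $\vp'$ maps $E+\spn\{c_1,\dotsc,c_{k+1}\}$ into $E^{**}+\bb Cd$, the composition $\psi'\circ\vp'$ restricted to $E+\spn\{c_1,\dotsc,c_{k+1}\}$ is unital completely positive and agrees with $\hat\cdot$ on $E$, completing the induction.
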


\begin{proof}
    Let us fix $E\subset B$. We first show that for every finite set $b_1,\dotsc,b_n\in B$ there is a unital, completely positive map $\vp: E+\bb Cb_1 + \dotsb + \bb Cb_n\to E^{**}$ so that $\vp(x) = \hat x$ for all $x\in E$. Inductively, assume that this is true for choice of $n$ elements, and consider $b_1,\dotsc,b_{n+1}\in B^h$. Let $\vp: E+\bb Cb_1 + \dotsb + \bb Cb_n\to E^{**}$ be such a map. We have that $E^{**}\subset \cc B(H)$ for some Hilbert space $H$, so by Arveson's extension theorem, we may extend $\vp$ to a unital, completely positive map $\vp': B\to \cc B(H)$. We have that $E\cong \widehat E\subset \cc B(H)$, so setting $d := \vp'(b_{n+1})$ by assumption there is a unital, completely positive map $\psi: E+ \bb Cd\to E^{**}$ so that $\psi(x) = \hat x$ for all $x\in E$. Let $\psi': (E+\bb Cd)^{**}\cong E^{**}+\bb Cd\to E^{**}$ be the continuous extension of $\psi$ in the weak* topology, which is again unital and completely positive. Since $\vp'(E+\bb Cb_1 + \dotsb + \bb Cb_{n+1})\subset E^{**}+\bb Cd$, we have that $\psi'\circ\vp': E+\bb Cb_1 + \dotsb + \bb Cb_{n+1}\to E^{**}$ is the required map.
    
    Since every $x\in B$ is a linear combination of two hermitian elements, we have demonstrated that for every subsystem $E\subset A\subset B$ with $A$ having finite co-dimension relative to $E$, that there is a unital, completely positive map $\vp_A: A\to E^{**}$ so that $\vp(x)=\hat x$ for all $x\in E$. Consider the directed set $\cc F$ of all such subsystems of $B$, ordered by inclusion. We may take a pointwise-weak* cluster point of the net $(\vp_A)_{A\in \cc F}$ to define a unital, completely positive map $\vp: B\to E^{**}$ so that $\vp(x)=\hat x$ for all $x\in E$. This verifies that $E$ has the weak expectation property.
\end{proof}

\begin{lem} \label{lem:wep-finitary}
    An operator system $E$ has the weak expectation property if and only if for every unital inclusion $E\subset B$ of operator systems for each finite-dimensional subsystem $A\subset B$, $n=1,2,\dotsc$, and $\e>0$, there is a $n$-contractive $\ast$-linear map $\vp: A\to E$ so that $\|\vp(x) - x\|\leq \e\|x\|$ for all $x\in E\cap A$. Moreover, it suffices to check that the second statement holds only for all $A = A_0 + \bb C b$ where $A_0\subset E$ is a finite-dimensional subsystem and $b\in B^h$.
\end{lem}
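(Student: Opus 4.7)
The plan is to establish both equivalences together by proving WEP $\Rightarrow$ (full finitary condition) $\Rightarrow$ (restricted finitary condition) $\Rightarrow$ WEP, where the middle implication is trivial and the outer two carry the substance. Throughout I write $L_*(A, F)$ for the (real) Banach space of $\ast$-linear maps $A \to F$ equipped with the $n$-th matrix norm $\|\vp\|_n$.

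For (WEP $\Rightarrow$ full): fix $E \subset B$ with $E$ WEP, $A \subset B$ finite-dimensional, and $n, \epsilon$. Restricting the WEP map $\Phi: B \to E^{**}$ to $A$ yields a UCP map $\psi: A \to E^{**}$ (so, in particular, completely contractive $\ast$-linear) with $\psi|_{E \cap A} = \hat{\cdot}$. I will approximate $\psi$ in the point-weak$^*$ topology by an $n$-contractive $\ast$-linear map $\vp: A \to E$. The key observation is that for finite-dimensional $A$, the complete isometry $\hat{\cdot}: E \to E^{**}$ (Proposition \ref{prop:E-dual-dual}) induces a canonical identification $L_*(A, E)^{**} \cong L_*(A, E^{**})$ as Banach spaces in the $n$-th matrix norm, so Goldstine's theorem guarantees that the unit ball of $L_*(A, E)$ is weak$^*$-dense in the unit ball of $L_*(A, E^{**})$. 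Applying this to $\psi$ produces a net of $n$-contractive $\ast$-linear maps $\vp_\alpha: A \to E$ with $\hat{\cdot} \circ \vp_\alpha \to \psi$ point-weak$^*$; ($\ast$-linearity can be enforced throughout by the symmetrization $\vp \mapsto \tfrac{1}{2}(\vp(\cdot) + \vp((\cdot)^*)^*)$, which does not increase any matrix norm). For each $x \in E \cap A$, $\vp_\alpha(x) \to x$ in the weak topology of $E$; since $E \cap A$ is finite-dimensional, applying Mazur's theorem to a basis lets me pass to a convex combination (still $\ast$-linear and $n$-contractive) achieving $\|\vp(x) - x\| \le \epsilon \|x\|$ uniformly on $E \cap A$.

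For (restricted $\Rightarrow$ WEP): by Lemma \ref{lem:weak-wep} it suffices, given $E \subset B$ and $b \in B^h$, to construct a UCP map $\Psi: E + \bb C b \to E^{**}$ extending $\hat{\cdot}$. Direct the set $I$ of triples $(A_0, n, \epsilon)$, with $A_0$ a finite-dimensional subsystem of $E$ containing $1$, by $(A_0, n, \epsilon) \le (A_0', n', \epsilon')$ iff $A_0 \subset A_0'$, $n \le n'$, and $\epsilon \ge \epsilon'$. The restricted hypothesis yields $\vp_i: A_0 + \bb C b \to E$ which is $\ast$-linear, $n$-contractive, and satisfies $\|\vp_i(x) - x\| \le \epsilon \|x\|$ on $A_0$. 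For any $y \in E + \bb C b$, once $i$ is large enough that the $E$-component of $y$ lies in $A_0$, the element $\widehat{\vp_i(y)}$ lies in the weak$^*$-compact ball of radius $\|y\|_B$ in $E^{**}$ (using that $\vp_i$ is $1$-contractive). A product-of-compact-sets/ultrafilter argument along $I$ extracts a pointwise-weak$^*$ cluster point $\Psi: E + \bb C b \to E^{**}$.

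To finish I verify $\Psi$ has the required properties: linearity and $\ast$-linearity pass to the cluster point, and for $x \in E$, eventually $x \in A_0$ and $\|\vp_i(x) - x\| \to 0$, hence $\Psi(x) = \hat{x}$ (and in particular $\Psi(1) = \hat{1}$). For complete contractivity, given any fixed $m$, eventually $n \ge m$ so $\vp_i$ is $m$-contractive; weak$^*$ lower semicontinuity of the matrix norm on $M_m(E^{**}) = M_m(E)^{**}$ passes this bound to $\Psi$. Thus $\Psi$ is unital, $\ast$-linear, and completely contractive, which forces UCP by applying Proposition \ref{prop:contraction-positive} to each matrix amplification $\Psi_m$. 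The main technical obstacle is the Goldstine step in the forward direction, specifically the bidual identification $L_*(A, E)^{**} \cong L_*(A, E^{**})$ with the $n$-th matrix norm; this is standard given that $A^*$ is finite-dimensional and $\hat{\cdot}$ is completely isometric, but requires a careful check that the double-dual norm matches the natural $n$-th matrix norm on the right-hand side.
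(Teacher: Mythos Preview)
Your argument is correct and the backward direction (as well as the final ``restricted'' reduction via Lemma~\ref{lem:weak-wep}) is essentially identical to the paper's. The forward direction, however, is executed differently. The paper applies the principle of local reflexivity to the single contraction $\vp_n: M_n(A)\to M_n(E)^{**}=M_n(E^{**})$ to obtain a contraction $\theta: M_n(A)\to M_n(E)$, and then must average $\theta$ over the group of signed permutation matrices to force it back into the form $\id_{M_n}\otimes\psi$ before symmetrising for $\ast$-linearity. You instead work directly at the level of the Banach space $(L(A,E),\|\cdot\|_n)$, identify its bidual with $(L(A,E^{**}),\|\cdot\|_n)$, and invoke Goldstine and Mazur. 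Your route avoids the averaging trick at the cost of the bidual identification you flag; that identification is indeed routine once one notes that $\vp\mapsto\vp_n$ isometrically embeds $(L(A,E),\|\cdot\|_n)$ into $L(M_n(A),M_n(E))$, that the image is cut out by weak$^*$-continuous linear conditions, and that $L(V,X)^{**}=L(V,X^{**})$ for finite-dimensional $V$. Both arguments ultimately rest on the same density principle, but the paper's use of local reflexivity delivers norm approximation immediately (hence no Mazur step), while your approach keeps the matrix structure intact throughout (hence no averaging step).
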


\begin{proof}
    %The proof that the weak expectation property implies the second statement is essentially given in \cite[Proposition 2.34]{gs-kirchberg} as there was no use of the assumption that $E$ is a C$^*$-algebra therein. 
    By the proof of Proposition \ref{prop:E-dual-dual}, we have that $M_n(E^{**})$ is completely order isomorphic to $M_n(E)^{**}$; hence, if $\vp: B\to E^{**}$ is a unital, completely positive map witnessing the weak expectation property, by the principle of local reflexivity \cite[Theorem 12.2.4]{albiac} for any finite dimensional subsystem $A\subset B$ and $\e>0$ there is a contraction $\theta: M_n(A)\to M_n(E)$ so that $\|\theta(x) - \vp_n(x)\|<\e\|x\|$ for all $x\in M_n(E\cap A)$. Let $\cc G$ be the group of all signed permutation matrices and set
    \[\theta'(x) = \frac{1}{|\cc G|^2} \sum_{g,h\in \cc G} g^*\theta(gxh)h^*.\] Since $g^*\vp_n(gxh)h^* = \vp_n(x)$ for all $g,h\in \cc G$, we have that $\|\theta'(x) - \vp_n(x)\|<\e\|x\|$ for all $x\in M_n(E\cap A)$ as well. It is not hard to check that $\theta'(e_{ij}\otimes x) = e_{ij}\otimes \psi(x)$, $i,j=1,\dotsc,n$, for a unique linear map $\psi:A\to E$, so that $\psi$ is $n$-contractive. Since $\psi$ is linear and $n$-contractive, so is $\psi^*(x) := \psi(x^*)^*$; thus, the average of the two is $n$-contractive and $\ast$-linear, with the other properties being preserved.

    In the other direction, let $\cc F$ bet the directed set of all finite-dimensional subsystems of $B$ ordered by inclusion, and let $I$ be the directed set of triples $(A,n,\e)$ where $A\in \cc F$, $n$ is a positive integer, and $\e>0$, ordered by $(A',n',\e')\geq (A,n,\e)$ if $A'\supset A$, $n'\geq n$, and $\e'\leq \e$. For each $(A,n,\e)\in I$, find $\vp_{A,n,\e}: A\to E$ matching the conditions of the second statement. Equip $E^{**}$ with the weak* topology, and take $\vp: B\to E^{**}$ to be a pointwise-weak* cluster point of the net $(\vp_{A,n,\e})$, where we identify $E$ with $\widehat E\subset E^{**}$. We have that $\vp$ is unital, $\ast$-linear, and completely contractive with $\vp(x) = \hat x$ for all $x\in E$. By Proposition \ref{prop:contraction-positive}, $\vp$ is also completely positive.
    
    For the last part, the previous argument can be easily adapted to verify that for each hermitian $b\in B^h$, there is a unital, completely positive map $\vp: E+\bb Cb\to E^{**}$ so that $\vp(x)=\hat x$ for all $x\in E$. Thus, the result follows by Lemma \ref{lem:weak-wep}.
\end{proof}

Recall from Definition \ref{defn:positive} that a formula $f$ is positive if it is built from atomic formulas only using connectives $\theta(x_1,\dotsc,x_n)$ satisfying $\theta(x_1,\dotsc,x_n)\leq \theta(y_1,\dotsc,y_n)$ when $x_i\leq y_i$, $i=1,\dotsc, n$.

\begin{defn}
    We say that an operator system $E$ is \emph{positively existentially closed} if for every unital inclusion $E\subset B$ of operator systems and every positive, quantifier-free formula $f(x,y)$ we have that $\inf_y f(x,y)^E = \inf_y f(x,y)^B$ for all tuples $x$ chosen from $E$.
\end{defn}

\begin{remark} \label{rmk:pec-map}
    We can as easily talk about a specific inclusion $E\subset B$ being \emph{positively existential} if the condition in the definition is met. In the operator system category, this is equivalent to the existence of an ultrafilter $\cc U$ on some set $I$ so that there is a unital, completely positive map $\vp: B\to E^{\cc U}$ so that $\vp(x) = \iota(x)$ for all $x\in E$, where $\iota: E\to E^{\cc U}$ is the constant embedding, cf. \cite[Remark 4.20]{barlak-szabo}. The proof of Lemma \ref{lem:wep-finitary} thus shows the following.
    
    \begin{prop} \label{prop:dual-dual-pec}
        The canonical inclusion $\hat\cdot: E\to E^{**}$ is positively existential for any operator system $E$.
    \end{prop}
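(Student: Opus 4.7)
The plan is to use the reformulation given in Remark \ref{rmk:pec-map}: it suffices to construct an ultrafilter $\cc U$ on some index set $I$ together with a unital, completely positive map $\vp: E^{**} \to E^{\cc U}$ satisfying $\vp(\hat x) = \iota(x)$ for all $x\in E$. The strategy is to mirror the local-reflexivity half of the proof of Lemma \ref{lem:wep-finitary}, but applied to the identity map on $E^{**}$ rather than to an abstract WEP-witnessing map.

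First I would fix the directed set $I$ of triples $(A,n,\e)$ where $A\subset E^{**}$ is a finite-dimensional subsystem, $n\in \bb N$, and $\e>0$, ordered by $A'\supset A$, $n'\geq n$, $\e'\leq \e$. For each $(A,n,\e)\in I$, I would invoke the matricial form of the principle of local reflexivity, using the canonical identification $M_n(E^{**})\cong M_n(E)^{**}$ as in the proof of Proposition \ref{prop:E-dual-dual}, to obtain a contraction $\theta_{A,n,\e}: M_n(A)\to M_n(E)$ whose values on $M_n(A\cap \hat E)$ are within $\e$ of the identity. Then, exactly as in the proof of Lemma \ref{lem:wep-finitary}, averaging $\theta_{A,n,\e}$ over the action of the group $\cc G$ of signed permutation matrices by conjugation produces an $n$-contractive, $\ast$-linear map $\psi_{A,n,\e}: A\to E$ such that $\|\psi_{A,n,\e}(\hat x) - x\|\leq \e\|x\|$ for all $\hat x\in A\cap \hat E$. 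Unitality comes for free since $1_A\in A\cap \hat E$ and the averaging respects the unit.

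Next I would choose an ultrafilter $\cc U$ on $I$ refining the order filter and set
\[\vp: E^{**}\to E^{\cc U},\qquad \vp(y) := \bigl(\psi_{A,n,\e}(y)\bigr)_{(A,n,\e)\in I},\]
where $\psi_{A,n,\e}(y)$ is defined whenever $y\in A$, which happens cofinally, and is taken to be $0$ otherwise. The map $\vp$ inherits linearity, $\ast$-linearity, and unitality from the $\psi_{A,n,\e}$; complete contractivity follows since for any fixed $n_0$, the maps are eventually $n_0$-contractive, so $\vp_{n_0}$ is contractive in the ultraproduct at the matrix level (using the computation of the ultraproduct at sort $E_{n_0}$ recorded after Definition \ref{defn:ultraproduct}). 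By Proposition \ref{prop:contraction-positive} applied at each matrix level (equivalently, Corollary \ref{cor:isom-order}), a unital, $\ast$-linear, completely contractive map is completely positive, so $\vp$ is UCP. Finally, for $x\in E$ the estimate $\|\psi_{A,n,\e}(\hat x) - x\|\leq \e\|x\|$ along the ultrafilter forces $\vp(\hat x) = \iota(x)$.

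The substantive step is the first one: justifying that the Banach-space local reflexivity principle from \cite[Theorem 12.2.4]{albiac} yields matricial contractions of the required form, and that the subsequent averaging preserves approximate fidelity on $\hat E\cap A$. Once this is in hand, the remainder is essentially the ``packaging'' argument of Lemma \ref{lem:wep-finitary}, rephrased so that the target operator system is an ultrapower $E^{\cc U}$ rather than the bidual $E^{**}$ equipped with its weak-$\ast$ topology; this is exactly the translation codified by Remark \ref{rmk:pec-map}.
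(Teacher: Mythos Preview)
Your proposal is correct and follows exactly the approach the paper intends: the paper simply records that ``the proof of Lemma \ref{lem:wep-finitary} thus shows'' the proposition, which amounts to running the local reflexivity plus signed-permutation averaging argument from the forward direction of that lemma with $B = E^{**}$ and the identity map in place of a WEP-witness, and then packaging the resulting approximants $\psi_{A,n,\e}$ into a unital completely positive map into an ultrapower via the equivalence in Remark \ref{rmk:pec-map} --- precisely what you have written out. One minor cleanup: the individual maps $\psi_{A,n,\e}$ are only approximately unital (within $\e$, since $\hat 1\in A\cap \hat E$), not exactly unital, so unitality of $\vp$ should be deduced from $\lim_{\cc U}\|\psi_{A,n,\e}(\hat 1) - 1\| = 0$ rather than claimed at each stage.
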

\end{remark}

\begin{remark} \label{rmk:pec}
    Since $\inf_y f(x,y)^E \geq\inf_y f(x,y)^B$ automatically, checking positive existential closure is equivalent to checking $\inf_y f(x,y)^E \leq \inf_y f(x,y)^B$ where the tuple $x$ is chosen from $E$. It is straightforward to verify that this in turn is equivalent to checking that for any finite collection $f_1(x,y),\dotsc,f_k(x,y)$ of atomic formulas (that is, $f_i(x,y) = \|p_i(x,y)\|_{d_i}$ for $p_i(x,y)$ a linear matrix $\ast$-polynomial of degree $d_i$), any $\e>0$, and any tuples $a$ in $E$ and $b$ in $B$, there is a tuple $b'$ in $E$ so that $f_i(a,b')\leq f_i(a,b)+\e$ for all $i=1,\dotsc,k$.
    
\end{remark}

We say that an element $x\in \cc B(H)$ is \emph{strictly positive} if $x - \e1$ is positive for some $\e>0$. We write $x\succ 0$ to denote that $x$ is strictly positive.

\begin{lem} \label{lem:wep-positive}
    An operator system is positively existentially closed if and only if for any unital inclusion $E\subset B$ and any finite collection $p_1(x,y),\dotsc, p_k(x,y)$ of hermitian linear matrix $\ast$-polynomials, the following property holds:
    
    \begin{itemize}
        \item[(R)] for any tuples $a$ in $E$ and $b$ in $B$ so that \[p_1(a,b)\succ 0, \dotsc, p_k(a,b) \succ 0,\] there is a tuple $b'$ in $E$ so that \[p_1(a,b')\succ 0, \dotsc, p_k(a,b')\succ 0.\]
    \end{itemize}
\end{lem}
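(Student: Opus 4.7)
The plan is to invoke the reformulation of positive existential closure given in Remark \ref{rmk:pec} and pass back and forth between norm-bound atomic formulas $\|q(x,y)\|_d$ and strict-positivity statements for hermitian linear matrix $\ast$-polynomials using the $2 \times 2$ block trick from Proposition \ref{prop:order-norm}, together with the elementary observation that for a hermitian element $p$ with $\|p\| \leq M$, the condition $p \succ 0$ is the same as $\|M\cdot 1 - p\| < M$.

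For the forward implication, I would assume $E$ is positively existentially closed and that $p_1(a,b),\ldots,p_k(a,b) \succ 0$ in $B$ with common lower bound $\delta > 0$. First choose $M$ strictly larger than $\max_i \|p_i(a,b)\|_{d_i}$, which exists because the family is finite. Since each $p_i$ is a hermitian linear matrix $\ast$-polynomial, so is $M\cdot 1_{d_i} - p_i(x,y)$, and hence
\[
f_i(x,y) := \| M\cdot 1_{d_i} - p_i(x,y) \|_{d_i}
\]
is a legitimate atomic (positive, quantifier-free) formula. Spectral reasoning gives $f_i(a,b)^B \leq M - \delta$. Applying the characterization from Remark \ref{rmk:pec} with threshold $\e := \delta/2$ produces $b'$ in $E$ with $f_i(a,b') \leq M - \delta/2$, which unpacks to $p_i(a,b') \succeq (\delta/2)\, 1 \succ 0$, as required.

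For the reverse implication, I would assume (R) and suppose that atomic formulas $f_i(x,y) = \|q_i(x,y)\|_{d_i}$, tuples $a$ in $E$ and $b$ in $B$, and $\e > 0$ are given. Setting $t_i := f_i(a,b)^B$, form the hermitian block polynomials
\[
\tilde p_i(x,y) := \begin{bmatrix} (t_i + \e)\, 1_{d_i} & q_i(x,y) \\ q_i(x,y)^* & (t_i + \e)\, 1_{d_i} \end{bmatrix},
\]
each a hermitian linear matrix $\ast$-polynomial of degree $2d_i$. A direct calculation using Proposition \ref{prop:order-norm} shows $\tilde p_i(a,b) \succeq \e\cdot 1 \succ 0$ in $M_{2d_i}(B)$; by (R) there is $b'$ in $E$ with $\tilde p_i(a,b') \succ 0$ for all $i$, which by Proposition \ref{prop:order-norm} exactly says $\|q_i(a,b')\|_{d_i} \leq t_i + \e$, verifying the condition from Remark \ref{rmk:pec}.

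The two implications are structurally symmetric, but each relies on having a small amount of slack: the strict inequalities in (R) are what allow one to produce the genuine open-norm bound $\|q_i(a,b')\| \leq t_i + \e$ via closedness of the positive cone, and conversely the $\delta/2$ slack in the forward direction is what upgrades the approximate norm inequality into honest strict positivity of $p_i(a,b')$. The only real obstacle, which I expect to be purely bookkeeping, is confirming that the shifted polynomial $M\cdot 1_{d_i} - p_i$ and the block polynomial $\tilde p_i$ remain \emph{hermitian linear matrix $\ast$-polynomials of bounded degree}, so that they fall within the scope of the hypothesis (R); there is no substantial analytic input beyond Proposition \ref{prop:order-norm}.
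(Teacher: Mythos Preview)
Your proof is correct and follows essentially the same approach as the paper. Both directions coincide in structure: for (R) $\Rightarrow$ positively existentially closed you and the paper use the identical $2\times 2$ block polynomial $\tilde p_i$ built from Proposition~\ref{prop:order-norm}; for the converse the paper centers via $q_i(x,y):=2p_i(x,y)-(r_i+\e)1_{d_i}$ with $r_i=\|p_i(a,b)\|_{d_i}$ while you shift via $M\cdot 1_{d_i}-p_i(x,y)$, but these are interchangeable affine recenterings of the same ``strict positivity $\leftrightarrow$ strict norm bound'' trick for hermitian elements.
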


\begin{proof}
    Assume that (R) holds. Let $p_1(x,y),\dotsc,p_k(x,y)$ be a collection of linear matrix $\ast$-polynomials of degree $d_1,\dotsc,d_k$. Fixing tuples $a$ in $E$ and $b$ in $B$ and $\e>0$, define $r_i := \|p_i(a,b)\|_{d_i}$ and consider the hermitian, linear matrix $\ast$-polynomials $q_1(x,y),\dotsc,q_k(x,y)$ where 
    \[q_i(x,y) := \begin{bmatrix} (r_i+\e) 1_{d_i} & p_i(x,y)\\ p_i^*(x,y) & (r_i + \e)1_{d_i}\end{bmatrix}.\]
    By Proposition \ref{prop:order-norm}, we have that $q_i(a,b)\succ 0$ for all $i=1,\dotsc,k$. Therefore, by (R) there exists a tuple $b'$ in $E$ so that $q_i(a,b')\succ 0$ for all $i=1,\dotsc,k$, and again by Proposition \ref{prop:order-norm} we have that $\|p_i(a,b')\|_{d_i}< r_i + \e$ for $i=1,\dotsc,k$. That $E$ is positively existentially closed now follows from Remark \ref{rmk:pec}.
    
    For the converse, let $p_1(x,y),\dotsc,p_k(x,y)$ be a collection of hermitian linear matrix $\ast$-polynomials of degrees $d_1,\dotsc,d_k$, and fix tuples $a$ in $E$ and $b$ in $B$ so that $p_i(a,b)\succeq \e 1_{d_i}$ for all $i=1,\dotsc,k$ for some $\e>0$ sufficiently small. Setting $r_i = \|p_i(a,b)\|_{d_i}$, we define 
    \[q_i(x,y) := 2p_i(x,y) - (r_i+\e)1_{d_i}.\]
    By some basic arithmetic we have that $-(r_i-\e)1_{d_i}\preceq q_i(a,b)\preceq (r_i-\e)1_{d_i}$, which is equivalent to $\|q_i(a,b)\|_{d_i}\leq r_i-\e$. Thus $E$ being positively existentially closed guarantees that there is a tuple $b'$ in $E$ so that $\|q_i(a,b')\|_{d_i}\leq r_i$ for all $i=1,\dotsc,k$. Since $q_i(x,y)$ is hermitian it only takes values in the hermitian elements; thus, \[-r_i1_{d_i}\preceq q_i(a,b') = 2p_i(a,b') - r_i1_{d_i} - \e 1_{d_i}\] for all $i=1,\dotsc,k$ which shows that $p_i(a,b')\succ 0$ for $i=1,\dotsc,k$.
\end{proof}

Since a positivity check on a collection of linear matrix $\ast$-polynomials is equivalent to a positivity check on a single linear matrix $\ast$-polynomial obtained from the collection via block diagonal embedding, the following is immediate from the proof of the previous result.

\begin{cor}
     An operator system is positively existentially closed if and only if for any unital inclusion $E\subset B$ and any hermitian linear matrix $\ast$-polynomial $p(x,y)$ of degree $d$, one of the following equivalent properties holds:
    
    \begin{itemize}
        \item[(R$_1$)] for any tuples $a$ in $E$ and $b$ in $B$ so that $p(a,b)\succ 0$ there is a tuple $b'$ in $E$ so that $p(a,b')\succ 0$.
        \item[(R$_2$)] for any tuples $a$ in $E$ and $b$ in $B$ so that $\|p(a,b)\|_d< 1$ there is a tuple $b'$ in $E$ so that $\|p(a,b')\|< 1$.
    \end{itemize}
\end{cor}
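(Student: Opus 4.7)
The plan is to show that each of (R$_1$) and (R$_2$) is equivalent to condition (R) of Lemma \ref{lem:wep-positive}, which will simultaneously establish their equivalence to each other and to positive existential closedness. The whole argument is a matter of translating between strict positivity of a collection of hermitian linear matrix $\ast$-polynomials and a single such condition.

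For the equivalence of (R$_1$) with (R), the implication (R) $\Rightarrow$ (R$_1$) is trivial as (R$_1$) is the $k=1$ case. For the converse, given hermitian linear matrix $\ast$-polynomials $p_1,\dotsc,p_k$ of degrees $d_1,\dotsc,d_k$, I would form the single hermitian linear matrix $\ast$-polynomial
\[
p(x,y) := \begin{bmatrix} p_1(x,y) & & \\ & \ddots & \\ & & p_k(x,y) \end{bmatrix}
\]
of degree $d_1 + \dotsb + d_k$. Since a block diagonal matrix is strictly positive if and only if each block is, applying (R$_1$) to $p$ at the given tuples produces a tuple $b'$ in $E$ satisfying all the inequalities demanded by (R).

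For the equivalence of (R$_1$) and (R$_2$), I would use the fact that for any hermitian element $h$ in $\cc B(H)$, the condition $\|h\|<1$ is equivalent to $1 - h \succ 0$ and $1 + h \succ 0$ simultaneously. To derive (R$_2$) from (R$_1$), given a hermitian $p$ with $\|p(a,b)\|_d<1$, I would apply (R$_1$) to the block diagonal hermitian linear matrix $\ast$-polynomial $\mathrm{diag}(1_d - p(x,y),\,1_d + p(x,y))$, whose strict positivity at $(a,b')$ is equivalent to $\|p(a,b')\|_d<1$. Conversely, to derive (R$_1$) from (R$_2$), given hermitian $p$ with $p(a,b) \succ 0$ and setting $r:=\|p(a,b)\|_d$, I would consider the rescaled hermitian linear matrix $\ast$-polynomial
\[
q(x,y) := \tfrac{2}{r+\e}\, p(x,y) - 1_d
\]
for a small $\e>0$; then $\|q(a,b)\|_d<1$ precisely because $0 \prec p(a,b) \preceq r 1_d \prec (r+\e)1_d$, and any $b'$ satisfying $\|q(a,b')\|_d<1$ produced by (R$_2$) will in turn satisfy $0 \prec p(a,b') \prec (r+\e)1_d$, so in particular $p(a,b') \succ 0$.

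I do not expect any genuine obstacle: the class of hermitian linear matrix $\ast$-polynomials is closed under the operations invoked (block diagonal assembly, scalar rescaling, addition of scalar multiples of $1$), and the spectral/norm translations used are the same ones already exploited in the proof of Lemma \ref{lem:wep-positive}. The argument is essentially a reduction by padding and rescaling, and the statement in the corollary is already implicit in the remark preceding its statement about combining finite collections via block diagonal embedding.
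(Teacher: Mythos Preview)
Your proposal is correct and follows essentially the same route as the paper. The paper simply asserts that the corollary is ``immediate from the proof of the previous result'' via block diagonal embedding, and what you have written is precisely an explicit unpacking of that claim: the block-diagonal reduction $(\mathrm{R})\Leftrightarrow(\mathrm{R}_1)$ is the one the paper invokes verbatim, and your direct argument for $(\mathrm{R}_1)\Leftrightarrow(\mathrm{R}_2)$ reuses the same rescaling trick $q = \tfrac{2}{r+\e}p - 1_d$ and the same norm/positivity translation already appearing in the proof of Lemma~\ref{lem:wep-positive} (you encode $\|p\|_d<1$ via $\mathrm{diag}(1_d-p,\,1_d+p)\succ 0$ for hermitian $p$, whereas the lemma's proof uses the off-diagonal $2\times 2$ block of Proposition~\ref{prop:order-norm}, but these are interchangeable encodings).
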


The following result was proved by Goldbring and the author, \cite[Section 2.4]{gs-kirchberg} and \cite[Proposition 5.1]{gs-omitting}, and by Lupini \cite[Proposition 3.2]{lupini-wep2018} in full generality. We note that in light of Remark \ref{rmk:pec-map} a characterization of the weak expectation property very much in the same spirit appears as \cite[Theorem 9.22]{pisier-ck}.

\begin{prop} \label{prop:wep-equals-pec}
    An operator system has the weak expectation property if and only if it is positively existentially closed.
\end{prop}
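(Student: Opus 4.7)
The strategy is to handle the two implications separately. For WEP $\Rightarrow$ PEC, the idea is to compose the weak expectation map $B \to E^{**}$ with the map $E^{**} \to E^{\cc U}$ witnessing that the canonical inclusion $\hat\cdot: E \to E^{**}$ is positively existential, invoking Proposition \ref{prop:dual-dual-pec} and Remark \ref{rmk:pec-map}. For PEC $\Rightarrow$ WEP, I would reduce via Lemma \ref{lem:wep-finitary} to a question about finite-dimensional subsystems of the form $A_0 + \bb C b$ and then realize approximate $n$-contractivity of a $\ast$-linear map $A_0 + \bb C b \to E$ as a finite system of strict linear matrix inequalities, solvable by PEC via property (R) of Lemma \ref{lem:wep-positive}.

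For the forward direction, assume $E$ has WEP and fix a unital inclusion $E \subset B$. WEP provides a ucp map $\vp: B \to E^{**}$ extending $\hat\cdot: E \to E^{**}$. By Proposition \ref{prop:dual-dual-pec} and Remark \ref{rmk:pec-map}, there exist an ultrafilter $\cc U$ and a ucp map $\psi: E^{**} \to E^{\cc U}$ with $\psi \circ \hat\cdot = \iota$ for the constant embedding $\iota: E \to E^{\cc U}$. The composition $\psi \circ \vp: B \to E^{\cc U}$ is then a ucp extension of $\iota$, so Remark \ref{rmk:pec-map} (applied now to recover PEC from the existence of such a map) shows the inclusion $E \subset B$ is positively existential. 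Since $B$ was arbitrary, $E$ is PEC.

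For the reverse direction, assume $E$ is PEC and fix $E \subset B$. By the moreover clause of Lemma \ref{lem:wep-finitary}, it suffices, for each finite-dimensional $A_0 \subset E$ with hermitian basis $\{1 = x_1, \dotsc, x_m\}$ chosen from Lemma \ref{lem:os-dual-basis}, each $b \in B^h$, each $n$, and each $\e > 0$, to produce an $n$-contractive $\ast$-linear $\vp: A_0 + \bb C b \to E$ with $\|\vp(x) - x\| \leq \e\|x\|$ on $A_0$. Fix a finite $\delta$-net $\{M^{(l)}\}_{l=1}^N$ of the closed unit ball of $M_n(A_0 + \bb C b)$ and expand $M^{(l)} = \sum_k A^{(l,k)} \otimes x_k + B^{(l)} \otimes b$, noting the coefficient matrices $A^{(l,k)}, B^{(l)} \in M_n$ are uniformly bounded by some $C$. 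Introduce the unknown tuple $(y_1,\dotsc,y_m,c) \subset E$ and consider the hermitian linear matrix $\ast$-polynomials in this tuple together with the fixed tuple $(x_1,\dotsc,x_m) \subset E$: closeness blocks $\bigl[\begin{smallmatrix} \delta' 1 & y_i - x_i \\ \ast & \delta' 1 \end{smallmatrix}\bigr]$, near-hermiticity blocks $\bigl[\begin{smallmatrix} \delta'' 1 & y_i - y_i^* \\ \ast & \delta'' 1 \end{smallmatrix}\bigr]$ (and likewise for $c$), and the contractivity blocks
\[ q^{(l)} := \begin{bmatrix} (1+\eta) I_n \otimes 1 & \sum_k A^{(l,k)} \otimes y_k + B^{(l)} \otimes c \\ \ast & (1+\eta) I_n \otimes 1 \end{bmatrix}. \]
Substituting $(y_i,c) = (x_i,b)$ makes each block strictly positive over $B$ (for $q^{(l)}$, this uses Proposition \ref{prop:order-norm} and $\|M^{(l)}\|_n \leq 1 < 1+\eta$). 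By property (R) of Lemma \ref{lem:wep-positive}, PEC supplies $(y,c) \in E^{m+1}$ satisfying the same strict inequalities. Replacing each $y_i, c$ by its hermitian part $\re(y_i) = (y_i + y_i^*)/2$ (a perturbation of size $\leq \delta''/2$), defining $\vp(x_i) := \re(y_i)$ and $\vp(b) := \re(c)$, and using the $\delta$-net together with the coefficient bound $C$, yields $\|\vp\|_n \leq 1 + O(\eta + C\delta'' + C\delta)$; a mild rescaling produces a genuine $n$-contraction. Finally, the dual-basis property of Lemma \ref{lem:os-dual-basis} applied to $A_0$ turns the basis-wise closeness $\|\vp(x_i) - x_i\| \leq \delta' + \delta''/2$ into $\|\vp(x) - x\| \leq m(\delta' + \delta''/2)\|x\|$ on $A_0$. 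Choosing $\delta, \delta', \delta'', \eta$ small enough in terms of $\e/m$ and $C$ completes the construction.

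The main obstacle is the interlocked error bookkeeping in the reverse direction: one has to coordinate the mesh $\delta$, the closeness tolerance $\delta'$, the hermitization slack $\delta''$, the contractivity slack $\eta$, and the post-hoc rescaling so that all cumulative perturbations fit within the single error budget $\e$. The conceptual subtlety is that PEC only yields strict inequalities rather than exact $n$-contractivity or exact completely positive lifts, so throughout the argument one must work with small positive slacks and recover genuine $n$-contractivity only at the very end.
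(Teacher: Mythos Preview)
Your proof is correct, but both directions take routes different from the paper's. For WEP $\Rightarrow$ PEC the paper works directly: given $\|p(a,b)\|_d<1$ with $a$ in $E$ and $b$ in $B$, it invokes Lemma \ref{lem:wep-finitary} to obtain a $\ast$-linear $d$-contraction $\vp: \spn\{a,b\}\to E$ nearly fixing the $a_i$'s, and checks $\|p(a,\vp(b))\|_d<1$ via a Lipschitz estimate. Your composition $B\to E^{**}\to E^{\cc U}$ through Proposition \ref{prop:dual-dual-pec} and Remark \ref{rmk:pec-map} is more conceptual and shorter, at the cost of importing the ultrapower characterization of positive existentiality. For PEC $\Rightarrow$ WEP the paper never passes through Lemma \ref{lem:wep-finitary}: it observes that ``the $\ast$-linear map $a\mapsto a$, $b\mapsto c$ is $n$-contractive'' is encoded by a countable list of strict norm inequalities $\|p_i(a,b)\|_n<r_i\Rightarrow\|p_i(a,c)\|_n<r_i$, uses PEC to solve any finite initial segment inside $E$, and takes a weak*-cluster point in $E^{**}$ to get the conditional expectation directly. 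Your approach instead localizes via the moreover clause of Lemma \ref{lem:wep-finitary}, replaces the unit ball of $M_n(A_0+\bb Cb)$ by a finite $\de$-net, and solves the resulting finite block-LMI system via property (R) of Lemma \ref{lem:wep-positive}. This is more hands-on and keeps the solution in $E$ rather than $E^{**}$, but the error bookkeeping (including the implicit bound on $\|c\|$, which you should extract from a suitable $q^{(l)}$ isolating $b$) is heavier than the paper's weak*-limit argument, which sidesteps all the $\de,\de',\de'',\eta$ calibration.
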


\begin{proof}
    Assume that $E$ has the weak expectation property and that $E\subset B$ is an inclusion of operator systems. Let $p(x,y)$ be a hermitian linear matrix $\ast$-polynomial. Suppose there are tuples $a=(a_1,\dotsc,a_n)$ in $E$ and $b=(b_1,\dotsc,b_k)$ in $B$ so that $\|p(a,b)\|_d< 1$, and let $A\subset B$ be the subsystem spanned by $a_1,\dotsc,a_n,b_1,\dotsc,b_k$. As guaranteed by Lemma \ref{lem:wep-finitary}, choose $\vp: A\to E$ to be $\ast$-linear, $d$-contractive, and have the property that $\max_i\|\vp(a_i) - a_i\|<\e$ for $\e>0$ suitably small, to be determined later. We have that
    \[\|p(a,\vp(b))\|_d \leq \|p(\vp(a),\vp(b))\|_d + L\e = \|\vp(p(a,b))\|_d + L\e\leq \|p(a,b)\|_d + L\e,\]
    where $L$ is the Lipschitz constant for the map $x\mapsto\|p(x,\vp(b))\|_d$. We now choose $\e>0$ so that $\|p(a,b)\|_d + L\e<1$.
    
    In the other direction, it is clear that for any tuples $a$ in $E$, $b$ in $B$, and $c$ in $E^{**}$ that the statement, ``the $\ast$-linear map $\vp$ on the span of $a$ and $b$ induced by sending $a$ to $a$ and $b$ to $c$ is $n$-contractive'' can be verified by the (countably) infinite collection of expressions of the form $\|p_i(a,b)\|_{n}< r_i\Rightarrow \|p_i(a,c)\|_{n}<r_i$ for $p_i$ linear matrix $\ast$-polynomials with complex rational coefficients; thus, fixing $a$ and $b$, by taking weak*-limits of tuples $c_m$ in $E = \widehat E\subset E^{**}$ satisfying the first $m$ statements we can define an $n$-contractive $\ast$-linear map $\vp_{a,b}$ from the span of $a$ and $b$ to $E^{**}$ satisfying $\vp(a_i) = \hat a_i$. The required map $\vp: B\to E^{**}$ is then obtained by taking a pointwise-weak* limit of the $\vp_{a,b}$'s over the directed set of all finite tuples.
\end{proof}

In light of Lemmas \ref{lem:wep-finitary} and \ref{lem:wep-positive}, the proof of the previous proposition can be easily adapted to prove the following.

\begin{cor} \label{cor:pec-single}
     An operator system is positively existentially closed if and only if for any unital inclusion $E\subset B$ and any hermitian linear matrix $\ast$-polynomial $p(x,y) = p(x_1,\dotsc,x_n,y)$ of degree $d$, one of the following equivalent properties holds:
    
    \begin{itemize}
        \item[(R$_3$)] for any tuple $a$ in $E$ and $b\in B^h$ so that $p(a,b)\succ 0$ there is $b'\in E^h$ so that $p(a,b')\succ 0$.
        \item[(R$_4$)] for any tuple $a$ in $E$ and $b\in B^h$ so that $\|p(a,b)\|_d< 1$ there is $b'\in E^h$ so that $\|p(a,b')\|< 1$.
    \end{itemize}
\end{cor}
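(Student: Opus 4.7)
The plan is to reduce directly to the proof of Proposition~\ref{prop:wep-equals-pec} using the ``moreover'' clause of Lemma~\ref{lem:wep-finitary}. That clause states that WEP may be verified using only subsystems of the form $A_0 + \bb C b$ with $A_0 \subset E$ finite-dimensional and $b \in B^h$, which is precisely the restriction on $y$ and $b$ appearing in (R$_3$) and (R$_4$). The equivalence (R$_3$) $\Leftrightarrow$ (R$_4$) is obtained by repeating verbatim the norm-to-positivity substitution from the proof of Lemma~\ref{lem:wep-positive}, so it suffices to prove that $E$ is positively existentially closed if and only if (R$_4$) holds.

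For ``$\Rightarrow$'', by Proposition~\ref{prop:wep-equals-pec} we may assume $E$ has WEP. Given a hermitian linear matrix $\ast$-polynomial $p(x_1,\dotsc,x_n,y)$ of degree $d$, a tuple $a$ in $E$, and $b \in B^h$ with $\|p(a,b)\|_d < 1$, set $A_0 := \spn\{1,a_1,\dotsc,a_n,a_1^*,\dotsc,a_n^*\}$ and apply the ``moreover'' part of Lemma~\ref{lem:wep-finitary} to $A := A_0 + \bb C b$: for every $\e > 0$ there is a $d$-contractive $\ast$-linear map $\vp: A \to E$ with $\max_i \|\vp(a_i) - a_i\| < \e$. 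Since $\vp$ is $\ast$-linear and $b = b^*$, the element $b' := \vp(b)$ lies in $E^h$, and exactly the estimate from the proof of Proposition~\ref{prop:wep-equals-pec},
\[\|p(a,b')\|_d \leq \|p(\vp(a),\vp(b))\|_d + L\e = \|\vp(p(a,b))\|_d + L\e \leq \|p(a,b)\|_d + L\e,\]
yields $\|p(a,b')\|_d < 1$ for $\e$ sufficiently small.

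For ``$\Leftarrow$'', assume (R$_4$) holds. To verify WEP via the ``moreover'' part of Lemma~\ref{lem:wep-finitary}, fix a finite-dimensional $A_0 \subset E$, an element $b \in B^h$, and $n, \e > 0$. Mirroring the converse argument in the proof of Proposition~\ref{prop:wep-equals-pec}, the condition that a candidate hermitian $c \in (E^{**})^h$ induces an $n$-contractive, $\ast$-linear map $A_0 + \bb C b \to E^{**}$ sending $a \mapsto \hat a$ on $A_0$ and $b \mapsto c$ with the required approximation is encoded by a countable family of implications of the form $\|p_i(a,b)\|_{n} < r_i \Rightarrow \|p_i(a,c)\|_{n} < r_i$, where each $p_i$ is a hermitian linear matrix $\ast$-polynomial in the variables $x$ together with a single distinguished variable $y$ and has complex rational coefficients. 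Any finite subfamily combines, via block diagonal embedding, into a single hermitian linear matrix $\ast$-polynomial of the precise form appearing in (R$_4$); applying (R$_4$) inside $B$ and then taking a weak$^*$ cluster point in $E^{**}$ (which preserves hermitian-ness) produces the candidate $c$. Assembling these local data by the pointwise-weak$^*$ cluster argument in the proof of Lemma~\ref{lem:wep-finitary} yields a unital completely positive map $B \to E^{**}$ extending the evaluation map, so $E$ has WEP, and hence is positively existentially closed by Proposition~\ref{prop:wep-equals-pec}.

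The main technical obstacle is ensuring that the reduction from tuples $y$ to a single hermitian variable does not lose information; this is exactly what the ``moreover'' clause of Lemma~\ref{lem:wep-finitary} buys, since adding one hermitian element at a time exhausts any finite-dimensional extension of $E$ inside $B$. Everything else is a routine specialization of the proof templates of Lemma~\ref{lem:wep-positive} and Proposition~\ref{prop:wep-equals-pec}, together with the observation that $\ast$-linear maps send hermitian elements to hermitian elements.
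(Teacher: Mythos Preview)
Your proof is correct and follows precisely the route the paper indicates: it adapts the two directions of Proposition~\ref{prop:wep-equals-pec} by invoking the ``moreover'' clause of Lemma~\ref{lem:wep-finitary} (so that only extensions $A_0 + \bb C b$ with $b\in B^h$ need be handled) and the norm/positivity conversion of Lemma~\ref{lem:wep-positive}. The one small point worth tightening is your assertion that the $p_i$ encoding $n$-contractivity can be taken hermitian---this is true, but only after passing through the $2\times 2$ trick of Proposition~\ref{prop:order-norm} (at the cost of doubling the degree), which is harmless since you quantify over all $n$ anyway.
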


\begin{question}
    Is there a proof of the previous corollary which uses positive existential closure directly without passing through the equivalence with the weak expectation property?
\end{question}

\begin{remark} For the (elementary) class of operator systems which are C$^*$-algebras, there is an explicit, countable collection of existentially quantified linear matrix $\ast$-polynomial inequalities which verifies the weak expectation property due to Farenick, Paulsen, and Kavruk \cite[Theorem 6.1]{fkp-wep}. Consider the hermitian linear matrix $\ast$-polynomial
\[ p(x_1,x_2,y_1,y_2) := \begin{bmatrix} y_1 & x_1 & 0\\ x_1^* & y_2 & x_2\\ 0 & x_2^* & 1 - y_1 -y_2 \end{bmatrix}.\]
A unital C$^*$-algebra $A$ has the weak expectation property if and only if for every unital inclusion $A\subset B$ of operator systems and every $n=1,2,\dotsc$ whenever $a_1,a_2\in M_n(A)$ and $b_1,b_2\in M_n(B)$ are so that $p(a_1,a_2,b_1,b_2)\succ 0$, there are $b_1',b_2'\in M_n(A)$ so that $p(a_1,a_2,b_1',b_2')\succ 0$.

\begin{remark}
    Note that since strict inequalities are used in the preceding, the existential conditions verifying the weak expectation property given are not first-order expressible. By replacing the entries of $p$ in the Farenick--Kavruk--Paulsen result with matrices, we actually increase the number of variables as each variable is replaced by a collection of variables standing in for the matrix units. 
    \begin{question}
        Is there a countable sequence $p_i(x,y)$ of hermitian linear matrix $\ast$-polynomials in a \emph{uniformly bounded} number of variables so that a C$^*$-algebra has the weak expectation property if and only if for all embeddings $A\subset \cc B(H)$ and all tuples $a$ in $A$ and $b$ in $\cc B(H)$ so that $p_i(a,b)\succ 0$ for all $i=1,2,\dotsc$ there is a tuple $b'$ in $A$ so that $p_i(a,b')\succ 0$? Would a single $p$ suffice?
    \end{question}
\end{remark}

%It was shown in \cite[Section 4]{gs-omitting} that the weak expectation property is not elementary, but the use of the strict positivity condition in the above seems to preclude its application here.
\end{remark}

\begin{remark}
    Perhaps the simplest non-trivial hermitian linear matrix $\ast$-polynomial is
    \[ p(x,y) = \begin{bmatrix} 1 - y & x^*\\ x & y \end{bmatrix}.\] For any C$^*$-algebra $A\subset \cc B(H)$ it is an exercise in functional calculus to show that if for any $a\in A$ there is some $b\in \cc B(H)$ so that $p(a,b)\succ 0$, then there is a $b'\in A$ so that $p(a,b')\succ 0$. (It is a bit easier to see that there is $b'\in A^{**}$ since $A^{**}$ is a von Neumann algebra, then use Proposition \ref{prop:dual-dual-pec}.) As is pointed out in the introduction of \cite{fkp-wep}, we have that $\exists b: p(a,b)\succ 0$ is equivalent to the \emph{numerical radius} $\omega(a)$ of $a$ being less than $1/2$. It would be interesting to know if the weak expectation property could be characterized in terms of this simple expression.
    \begin{question}
        With $p$ as in the previous remark, does a C$^*$-algebra $A$ have the weak expectation property if for some (for all) $n$ for all $a_1,\dotsc,a_n\in A$ for which there exists $b\in \cc B(H)$ so that $p(a_i,b)\succ 0$ for all $i=1,\dotsc,n$ there is $b'\in A$ so that $p(a_i,b')\succ 0$ for all $i=1,\dotsc,n$?
    \end{question}
\end{remark}

We close this section with a model-theoretic proof of a characterization of the weak expectation property which is an unpublished result due to Kavruk \cite{kavruk-riesz}. Kavruk's proof uses the theory of operator system tensor products \cite{KPTT2011,kptt2013} along with Kirchberg's tensorial characterization of the weak expectation property \cite{kirchberg1993}.

\begin{notation}
    In the following, we write $\bowtie$ to indicate a fixed choice of $\prec$ or $\succ$, so that, for instance, $a_1\bowtie b_1$, $a_2\bowtie b_2$ means one of four possible expressions with one choice of $\prec$ or $\succ$ for each pair $a_i,b_i$ which remains fixed throughout.
\end{notation}

\begin{defn}
    We say that any operator system $E$ has the \emph{complete tight Riesz interpolation property} if for any unital inclusion of operator systems $E\subset B$, any $n=1,2,...$, and any tuple $a= (a_1\dotsc,a_k)$ in $M_n(E)^h$ and $b\in M_n(B)^h$ if $b \bowtie a_i$ for all $i=1,\dotsc,k$, then there is a $b'\in M_n(E)^h$ so that $b' \bowtie a_i$ for all $i=1,\dotsc,k$. 
\end{defn}

\begin{thm}[Kavruk {\cite[Theorem 7.4]{kavruk-riesz}}]
    An operator system $E$ has the complete tight Riesz interpolation property if and only if it has the weak expectation property.
\end{thm}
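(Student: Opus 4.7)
The plan is to prove the equivalence via the characterization of the weak expectation property as positive existential closure from Proposition \ref{prop:wep-equals-pec}, using criterion (R$_1$) of Lemma \ref{lem:wep-positive}.

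For the forward direction, when $E$ is positively existentially closed I would encode the tight Riesz comparisons as a single strict positivity. Given $E \subset B$, $n \in \bb N$, $a_1, \dotsc, a_k \in M_n(E)^h$, and $b \in M_n(B)^h$ with $b \bowtie a_i$ for all $i$, set $\epsilon_i = +1$ when $b \succ a_i$ and $\epsilon_i = -1$ when $b \prec a_i$, and consider the hermitian linear matrix $\ast$-polynomial
\[
    P(x^{(1)}, \dotsc, x^{(k)}, y) := \mathrm{diag}(\epsilon_1(y - x^{(1)}), \dotsc, \epsilon_k(y - x^{(k)})),
\]
where each $x^{(i)}$ and $y$ is a tuple of $n^2$ variables corresponding to the matrix entries. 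Then $P(a_1, \dotsc, a_k, b) \succ 0$ in $M_{kn}(B)$, and (R$_1$) supplies $b' \in M_n(E)^h$ with $P(a_1, \dotsc, a_k, b') \succ 0$, which is exactly $b' \bowtie a_i$ for all $i$.

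For the reverse direction, my goal is to verify (R$_1$). Given $p(x, y)$ a hermitian linear matrix $\ast$-polynomial of degree $d$ with tuples $a$ in $E$ and $b$ in $B$ satisfying $p(a, b) \succ 0$, I would combine any finite collection of such polynomials into a single one by block-diagonal embedding, then split $p(x, y) = q(x) + r(y)$ into pieces depending purely on $x$ and $y$. The task then becomes finding a tuple $b'$ in $E$ with $r(b') \succ -q(a)$ in $M_d(E)^h$, given that $r(b) \succ -q(a)$ in $M_d(B)^h$. A naive application of CTRIP at level $d$ with $k = 1$ produces some $Z \in M_d(E)^h$ above $-q(a)$, but does not guarantee that $Z$ has the required form $r(b')$ for some $b'$ in $E$.

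This form constraint is what I expect to be the main obstacle. My plan to address it is to construct indirectly a u.c.p.\ extension $\phi: B \to E^{\cc U}$ of the canonical inclusion $\iota: E \to E^{\cc U}$ for a countably incomplete ultrafilter $\cc U$, by a Zorn's lemma argument extending across one hermitian element at a time, following the proof of Lemma \ref{lem:weak-wep}. At each step across $b \in B^h$ the desired $c := \phi(b) \in E^{\cc U}$ must satisfy $\psi_n(X) + \lambda \otimes c \succeq 0$ in $M_n(E^{\cc U})$ for every $X$ in the current domain and every $\lambda \in M_n^h$ with $X + \lambda \otimes b \succeq 0$ in $M_n(B)$; countable saturation of $E^{\cc U}$ reduces this infinite system to finite subfamilies, and the key difficulty — which I anticipate will require the most care — is to deploy CTRIP at the relevant matrix levels to find a single \emph{scalar} $c \in E^{\cc U}$ simultaneously witnessing all finitely many matrix positivity conditions, exploiting the ``complete'' aspect of CTRIP to handle the compatibility across levels $n$. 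Once $\phi$ is constructed, Remark \ref{rmk:pec-map} witnesses that the inclusion $E \subset B$ is positively existential, and arbitrariness of $B$ gives the weak expectation property.
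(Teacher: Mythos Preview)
Your forward direction (WEP $\Rightarrow$ CTRIP) is correct and is exactly what the paper does, encoding the finite family of strict inequalities as a single block-diagonal positivity and invoking (R$_1$).

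For the reverse direction, however, the ultrapower/saturation detour does not actually bypass the obstacle you yourself flag. After you apply countable saturation and block-diagonalize the finitely many surviving conditions, you are left with precisely this problem: given $X\in M_d^h$ and $A\in M_d(E)^h$ with $X\otimes b\succ A$ for some $b\in B^h$, find $c\in E^h$ with $X\otimes c\succ A$. This is exactly property (R$_3$) for a single hermitian linear matrix $\ast$-polynomial in one $y$-variable, and it is the whole content of the hard direction. Applying CTRIP naively at level $d$ only produces some $Z\in M_d(E)^h$ with $Z\succ A$; nothing forces $Z$ to have the tensor form $X\otimes c$. Your proposal acknowledges this is ``the key difficulty'' but offers no mechanism to resolve it, and the ``complete'' aspect of CTRIP across levels $n$ does not by itself provide one.

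The paper's missing idea is algebraic rather than model-theoretic: perturb $X$ to be invertible, write $X=Y\Sigma Y^*$ with $\Sigma$ a diagonal signature matrix, conjugate to reduce to $\Sigma\otimes b\succ Y^{-1}A(Y^{-1})^*$, and then exploit the symmetry group $\cc G$ of signed permutations fixing $\Sigma$. One first applies CTRIP at level $d$ to the family $\{A_g:=gY^{-1}A(Y^{-1})^*g^*\}_{g\in\cc G}$ to get some $b'\in M_d(E)^h$ dominating all of them, and then \emph{averages} $b'$ over $\cc G$ to force a block-diagonal form $I_f\otimes b_1\oplus I_{d-f}\otimes b_2$ with $b_1,b_2\in E^h$. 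A second, scalar-level application of CTRIP (for the inclusion $E\subset\cc B(H)$, using positive/negative parts to manufacture a witness in $\cc B(H)$) then produces $c\in E^h$ with $b_1-\e1\prec c\prec -b_2+\e1$, whence $\Sigma\otimes c$ and hence $X\otimes c$ works. None of this structure is visible in your plan; the saturation argument is a reasonable way to organize the proof, but the diagonalization-and-averaging step is where the work actually happens.
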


\begin{proof}
    Proposition \ref{prop:wep-equals-pec} and Lemma \ref{lem:wep-positive} show that the weak expectation property implies the complete tight Riesz interpolation property.
    
    For the converse, we will show that that the complete tight Riesz interpolation property implies property (R$_3$), therefore the weak expectation property by Proposition \ref{prop:wep-equals-pec} and Corollary \ref{cor:pec-single}. Let $E\subset B$ be a unital inclusion of operator systems and $p(x_1,\dotsc,x_n,y)$ be a hermitian linear matrix $\ast$-polynomial of degree $d$. Assume we have chosen $a_1,\dotsc,a_n\in E$ and $b\in B^h$ so that $p(a_1,\dotsc,a_n,b)\succ 0$. Since $b$ is hermitian, we have that 
    \[ p(a_1,\dotsc,a_n,b) = X\otimes b - A \succ 0\]
    where $X\in M_d^h$ and $A\in M_n(E)^h$. By perturbing $X$ by $\e I_d$ for $\e$ sufficiently small, we may assume without loss of generality that $X$ is invertible. Therefore, we have that $X = Y\Sg Y^*$ for $Y$ invertible and $\Sg$ a diagonal matrix with $\pm 1$ entries on the main diagonal. We may assume the first $f$ entries of the main diagonal of $\Sg$ are $1$ with the remaining being $-1$. Let $\cc G$ be the group of signed permutation matrices in $M_d$ preserving the set $\{\pm e_1, \dots, \pm e_f\}$ where $e_1,\dotsc,e_d$ is the standard basis for $\bb C^d$.
    
    For each $g\in \cc G$, define $A_g := gY^{-1}A(Y^{-1})^*g^*\in M_d(E)^h$, and for $z\in B^h$ consider the system of inequalities
    \[\{\Sg\otimes z \succ A_g : g\in \cc G\},\]
    noting that this system is consistent if and only if $X\otimes z\succ A$ since $g(\Sg\otimes b)g^* = \Sg\otimes b$ for all $g\in \cc G$. By the complete tight Riesz interpolation property, there is $b'\in M_d(E)^h$ so that $b' \succ A_g$ for all $g\in \cc G$. We have that $gb'g^* \succ Y^{-1}A(Y^{-1})^*$ for all $g\in \cc G$; thus,
    \[b'' := \frac{1}{|\cc G|} \sum_{g\in \cc G} gb'g^* \succ Y^{-1}A(Y^{-1})^*.\]
    It is not hard to check that $b''$ is block diagonal of the form
    \[ b'' = \begin{bmatrix} I_f\otimes b_1 & 0\\ 0 & I_{d-f}\otimes b_2\end{bmatrix}\]
    for some $b_1,b_2\in E^h$. It now suffices to find $c\in E^h$ with $c\succ b_1-\e1$ and $-c\succ b_2-\e1$ for $\e>0$ sufficiently small as this would imply \[\Sg\otimes c \succ b'' -\e 1_d \succ Y^{-1}A(Y^{-1})^*,\ \textup{therefore}\ X\otimes c\succ A.\] 
    
     In order to do so, we choose $\e>0$ witnessing the strict positivity of $b''$ as in the right-hand side of the preceding equation line and set $\de = \e/2$. Realizing $E\subset \cc B(H)$ and setting $b_i^\delta = b_i -\de 1$, we have that $b_i^\de  = [b_i^\de]_+ - [b_i^\de]_{-}$ for some $[b_i^\de]_+,[b_i^\de]_-\in \cc B(H)^+$, $i=1,2$. Setting $c' := [b_2^\de]_+ - [b_1^\de]_+\in \cc B(H)^h$, we can check that
    \[b_1 - \e1 \prec b_1 - \de1 \preceq c'\preceq -b_2 + \de 1 \prec -b_2 + \e1.\]
    Hence by the complete tight Riesz interpolation property there is $c\in E^h$ so that $b_1 - \e1 \prec c \prec -(b_2 -\e 1)$ verifying the claim. \qedhere
    
\end{proof}

\section*{Acknowledgements} The author was supported by NSF grant DMS-2055155. The author thanks Isaac Goldbring, Connor Thompson, and the anonymous referee for suggesting many corrections and improvements.

\begin{bibdiv}
\begin{biblist}

\bib{albiac}{book}{
   author={Albiac, Fernando},
   author={Kalton, Nigel J.},
   title={Topics in Banach space theory},
   series={Graduate Texts in Mathematics},
   volume={233},
   publisher={Springer, New York},
   date={2006},
   pages={xii+373},
   isbn={978-0387-28141-4},
   isbn={0-387-28141-X},
   review={\MR{2192298}},
}

\bib{alfsen}{book}{
   author={Alfsen, Erik M.},
   title={Compact convex sets and boundary integrals},
   series={Ergebnisse der Mathematik und ihrer Grenzgebiete, Band 57},
   publisher={Springer-Verlag, New York-Heidelberg},
   date={1971},
   pages={x+210},
   review={\MR{0445271}},
}

\bib{aubrun}{book}{
   author={Aubrun, Guillaume},
   author={Szarek, Stanis\l aw J.},
   title={Alice and Bob meet Banach},
   series={Mathematical Surveys and Monographs},
   volume={223},
   note={The interface of asymptotic geometric analysis and quantum
   information theory},
   publisher={American Mathematical Society, Providence, RI},
   date={2017},
   pages={xxi+414},
   isbn={978-1-4704-3468-7},
   review={\MR{3699754}},
   doi={10.1090/surv/223},
}

\bib{barlak-szabo}{article}{
   author={Barlak, Sel\c{c}uk},
   author={Szab\'{o}, G\'{a}bor},
   title={Sequentially split $\ast$-homomorphisms between $\rm
   C^*$-algebras},
   journal={Internat. J. Math.},
   volume={27},
   date={2016},
   number={13},
   pages={1650105, 48},
   issn={0129-167X},
   review={\MR{3589655}},
   doi={10.1142/S0129167X16501056},
}

\bib{model-metric}{article}{
   author={Ben Yaacov, Ita\"{\i}},
   author={Berenstein, Alexander},
   author={Henson, C. Ward},
   author={Usvyatsov, Alexander},
   title={Model theory for metric structures},
   conference={
      title={Model theory with applications to algebra and analysis. Vol. 2},
   },
   book={
      series={London Math. Soc. Lecture Note Ser.},
      volume={350},
      publisher={Cambridge Univ. Press, Cambridge},
   },
   date={2008},
   pages={315--427},
   review={\MR{2436146}},
   doi={10.1017/CBO9780511735219.011},
}

\bib{BrownOzawa}{book}{
   author={Brown, Nathanial P.},
   author={Ozawa, Narutaka},
   title={$C^*$-algebras and finite-dimensional approximations},
   series={Graduate Studies in Mathematics},
   volume={88},
   publisher={American Mathematical Society, Providence, RI},
   date={2008},
   pages={xvi+509},
   isbn={978-0-8218-4381-9},
   isbn={0-8218-4381-8},
   review={\MR{2391387}},
   doi={10.1090/gsm/088},
}

\bib{Choi1975}{article}{
    AUTHOR = {Choi, Man Duen},
     TITLE = {Completely positive linear maps on complex matrices},
   JOURNAL = {Linear Algebra Appl.},
  FJOURNAL = {Linear Algebra and its Applications},
    VOLUME = {10},
      YEAR = {1975},
     PAGES = {285--290},
      ISSN = {0024-3795},
   MRCLASS = {15A60 (46L05)},
  MRNUMBER = {376726},
MRREVIEWER = {E. St\o rmer},
       DOI = {10.1016/0024-3795(75)90075-0},
       URL = {https://doi.org/10.1016/0024-3795(75)90075-0},
}

\bib{choi-effros-duke}{article}{
   author={Choi, Man Duen},
   author={Effros, Edward G.},
   title={Separable nuclear $C\sp*$-algebras and injectivity},
   journal={Duke Math. J.},
   volume={43},
   date={1976},
   number={2},
   pages={309--322},
   issn={0012-7094},
   review={\MR{405117}},
}

\bib{Choi1977}{article}{
  author={Choi, Man Duen},
  author={Effros, Edward G.},
  title={Injectivity and operator spaces},
  journal={Journal of functional analysis},
  volume={24},
  number={2},
  pages={156--209},
  year={1977},
  publisher={Elsevier},
}

\bib{choi-effros-iu}{article}{
   author={Choi, Man Duen},
   author={Effros, Edward G.},
   title={Nuclear $C\sp*$-algebras and injectivity: the general case},
   journal={Indiana Univ. Math. J.},
   volume={26},
   date={1977},
   number={3},
   pages={443--446},
   issn={0022-2518},
   review={\MR{430794}},
   doi={10.1512/iumj.1977.26.26034},
}

\bib{choi78}{article}{
   author={Choi, Man Duen},
   author={Effros, Edward G.},
   title={Nuclear $C\sp*$-algebras and the approximation property},
   journal={Amer. J. Math.},
   volume={100},
   date={1978},
   number={1},
   pages={61--79},
   issn={0002-9327},
   review={\MR{482238}},
   doi={10.2307/2373876},
}

\bib{Connes-inj}{article}{
   author={Connes, A.},
   title={Classification of injective factors. Cases $II_{1},$
   $II_{\infty },$ $III_{\lambda },$ $\lambda \not=1$},
   journal={Ann. of Math. (2)},
   volume={104},
   date={1976},
   number={1},
   pages={73--115},
   issn={0003-486X},
   review={\MR{454659}},
   doi={10.2307/1971057},
}

\bib{Conway}{book}{
   author={Conway, John B.},
   title={A course in functional analysis},
   series={Graduate Texts in Mathematics},
   volume={96},
   edition={2},
   publisher={Springer-Verlag, New York},
   date={1990},
   pages={xvi+399},
   isbn={0-387-97245-5},
   review={\MR{1070713}},
}

\bib{effros-nc-order}{article}{
   author={Effros, Edward G.},
   title={Aspects of noncommutative order},
   conference={
      title={${\rm C}^{\ast}$-algebras and applications to physics},
      address={Proc. Second Japan-USA Sem., Los Angeles, Calif.},
      date={1977},
   },
   book={
      series={Lecture Notes in Math.},
      volume={650},
      publisher={Springer, Berlin},
   },
   date={1978},
   pages={1--40},
   review={\MR{504750}},
}

\bib{model-c-star}{article}{
   author={Farah, Ilijas},
   author={Hart, Bradd},
   author={Lupini, Martino},
   author={Robert, Leonel},
   author={Tikuisis, Aaron},
   author={Vignati, Alessandro},
   author={Winter, Wilhelm},
   title={Model theory of $\rm C^*$-algebras},
   journal={Mem. Amer. Math. Soc.},
   volume={271},
   date={2021},
   number={1324},
   pages={viii+127},
   issn={0065-9266},
   isbn={978-1-4704-4757-1},
   isbn={978-1-4704-6626-8},
   review={\MR{4279915}},
   doi={10.1090/memo/1324},
}

\bib{fhs-ii}{article}{
   author={Farah, Ilijas},
   author={Hart, Bradd},
   author={Sherman, David},
   title={Model theory of operator algebras II: model theory},
   journal={Israel J. Math.},
   volume={201},
   date={2014},
   number={1},
   pages={477--505},
   issn={0021-2172},
   review={\MR{3265292}},
   doi={10.1007/s11856-014-1046-7},
}

\bib{Farenick2012}{article}{
   author={Farenick, Douglas},
   author={Paulsen, Vern I.},
   title={Operator system quotients of matrix algebras and their tensor
   products},
   journal={Math. Scand.},
   volume={111},
   date={2012},
   number={2},
   pages={210--243},
   issn={0025-5521},
   review={\MR{3023524}},
   doi={10.7146/math.scand.a-15225},
}

\bib{fkp-wep}{article}{
   author={Farenick, Douglas},
   author={Kavruk, Ali S.},
   author={Paulsen, Vern I.},
   title={$C^\ast$-algebras with the weak expectation property and a
   multivariable analogue of Ando's theorem on the numerical radius},
   journal={J. Operator Theory},
   volume={70},
   date={2013},
   number={2},
   pages={573--590},
   issn={0379-4024},
   review={\MR{3138370}},
   doi={10.7900/jot.2011oct07.1938},
}

\bib{GartnerMatousek2012}{book}{
   author={G\"{a}rtner, Bernd},
   author={Matou\v{s}ek, Ji\v{r}\'{\i}},
   title={Approximation algorithms and semidefinite programming},
   publisher={Springer, Heidelberg},
   date={2012},
   pages={xii+251},
   isbn={978-3-642-22014-2},
   isbn={978-3-642-22015-9},
   review={\MR{3015090}},
   doi={10.1007/978-3-642-22015-9},
}

\bib{gs-kirchberg}{article}{
   author={Goldbring, Isaac},
   author={Sinclair, Thomas},
   title={On Kirchberg's embedding problem},
   journal={J. Funct. Anal.},
   volume={269},
   date={2015},
   number={1},
   pages={155--198},
   issn={0022-1236},
   review={\MR{3345606}},
   doi={10.1016/j.jfa.2015.02.016},
}

\bib{gs-omitting}{article}{
   author={Goldbring, Isaac},
   author={Sinclair, Thomas},
   title={Omitting types in operator systems},
   journal={Indiana Univ. Math. J.},
   volume={66},
   date={2017},
   number={3},
   pages={821--844},
   issn={0022-2518},
   review={\MR{3663327}},
   doi={10.1512/iumj.2017.66.6019},
}

\bib{gs-axiom}{article}{
   author={Goldbring, Isaac},
   author={Sinclair, Thomas},
   title={On the axiomatizability of $\rm C^*$-algebras as operator systems},
   journal={Glasg. Math. J.},
   volume={61},
   date={2019},
   number={3},
   pages={629--635},
   issn={0017-0895},
   review={\MR{3991361}},
   doi={10.1017/s001708951800040x},
}

\bib{grothendieck}{article}{
   author={Grothendieck, Alexandre},
   title={Produits tensoriels topologiques et espaces nucl\'{e}aires},
   language={French},
   journal={Mem. Amer. Math. Soc.},
   volume={16},
   date={1955},
   pages={Chapter 1: 196 pp.; Chapter 2: 140},
   issn={0065-9266},
   review={\MR{75539}},
}

\bib{kadison}{article}{
   author={Kadison, Richard V.},
   title={A representation theory for commutative topological algebra},
   journal={Mem. Amer. Math. Soc.},
   volume={7},
   date={1951},
   pages={39},
   issn={0065-9266},
   review={\MR{44040}},
}

\bib{kavruk-riesz}{article}{
       author = {Kavruk, Ali S.},
        title = {The Weak Expectation Property and Riesz Interpolation},
      journal = {arXiv e-prints},
     keywords = {Mathematics - Operator Algebras},
         year = {2012},
          eid = {arXiv:1201.5414},
        pages = {arXiv:1201.5414},
archivePrefix = {arXiv},
       eprint = {1201.5414},
 primaryClass = {math.OA},
       adsurl = {https://ui.adsabs.harvard.edu/abs/2012arXiv1201.5414K},
      adsnote = {Provided by the SAO/NASA Astrophysics Data System}
}

\bib{Kavruk2014}{article}{
   author={Kavruk, Ali S.},
   title={Nuclearity related properties in operator systems},
   journal={J. Operator Theory},
   volume={71},
   date={2014},
   number={1},
   pages={95--156},
   issn={0379-4024},
   review={\MR{3173055}},
   doi={10.7900/jot.2011nov16.1977},
}

\bib{Kavruk2015}{article}{
   author={Kavruk, Ali S.},
   title={On a non-commutative analogue of a classical result of Namioka and
   Phelps},
   journal={J. Funct. Anal.},
   volume={269},
   date={2015},
   number={10},
   pages={3282--3303},
   issn={0022-1236},
   review={\MR{3401618}},
   doi={10.1016/j.jfa.2015.09.002},
}

\bib{KPTT2011}{article}{
   author={Kavruk, Ali S.},
   author={Paulsen, Vern I.},
   author={Todorov, Ivan G.},
   author={Tomforde, Mark},
   title={Tensor products of operator systems},
   journal={J. Funct. Anal.},
   volume={261},
   date={2011},
   number={2},
   pages={267--299},
   issn={0022-1236},
   review={\MR{2793115}},
   doi={10.1016/j.jfa.2011.03.014},
}

\bib{kptt2013}{article}{
   author={Kavruk, Ali S.},
   author={Paulsen, Vern I.},
   author={Todorov, Ivan G.},
   author={Tomforde, Mark},
   title={Quotients, exactness, and nuclearity in the operator system
   category},
   journal={Adv. Math.},
   volume={235},
   date={2013},
   pages={321--360},
   issn={0001-8708},
   review={\MR{3010061}},
   doi={10.1016/j.aim.2012.05.025},
}

\bib{kirchberg77}{article}{
   author={Kirchberg, Eberhard},
   title={$C\sp*$-nuclearity implies CPAP},
   journal={Math. Nachr.},
   volume={76},
   date={1977},
   pages={203--212},
   issn={0025-584X},
   review={\MR{512362}},
   doi={10.1002/mana.19770760115},
}

\bib{kirchberg1993}{article}{
   author={Kirchberg, Eberhard},
   title={On nonsemisplit extensions, tensor products and exactness of group
   $C^*$-algebras},
   journal={Invent. Math.},
   volume={112},
   date={1993},
   number={3},
   pages={449--489},
   issn={0020-9910},
   review={\MR{1218321}},
   doi={10.1007/BF01232444},
}

\bib{kirchberg-uhf}{article}{
   author={Kirchberg, Eberhard},
   title={Commutants of unitaries in UHF algebras and functorial properties
   of exactness},
   journal={J. Reine Angew. Math.},
   volume={452},
   date={1994},
   pages={39--77},
   issn={0075-4102},
   review={\MR{1282196}},
   doi={10.1515/crll.1994.452.39},
}

\bib{kriel}{article}{
   author={Kriel, Tom-Lukas},
   title={An introduction to matrix convex sets and free spectrahedra},
   journal={Complex Anal. Oper. Theory},
   volume={13},
   date={2019},
   number={7},
   pages={3251--3335},
   issn={1661-8254},
   review={\MR{4020034}},
   doi={10.1007/s11785-019-00937-8},
}

\bib{Lance1995}{book}{
   author={Lance, E. C.},
   title={Hilbert $C^*$-modules},
   series={London Mathematical Society Lecture Note Series},
   volume={210},
   note={A toolkit for operator algebraists},
   publisher={Cambridge University Press, Cambridge},
   date={1995},
   pages={x+130},
   isbn={0-521-47910-X},
   review={\MR{1325694}},
   doi={10.1017/CBO9780511526206},
}

\bib{Lovasz2003}{article}{
   author={Lov\'{a}sz, L.},
   title={Semidefinite programs and combinatorial optimization},
   conference={
      title={Recent advances in algorithms and combinatorics},
   },
   book={
      series={CMS Books Math./Ouvrages Math. SMC},
      volume={11},
      publisher={Springer, New York},
   },
   date={2003},
   pages={137--194},
   review={\MR{1952986}},
   doi={10.1007/0-387-22444-0\_6},
}

\bib{lupini-wep2018}{article}{
   author={Lupini, Martino},
   title={An intrinsic order-theoretic characterization of the weak
   expectation property},
   journal={Integral Equations Operator Theory},
   volume={90},
   date={2018},
   number={5},
   pages={Paper No. 55, 17},
   issn={0378-620X},
   review={\MR{3829543}},
   doi={10.1007/s00020-018-2479-x},
}

   \bib{paulsen2002completely}{book}{
    AUTHOR = {Paulsen, Vern},
     TITLE = {Completely bounded maps and operator algebras},
    SERIES = {Cambridge Studies in Advanced Mathematics},
    VOLUME = {78},
 PUBLISHER = {Cambridge University Press, Cambridge},
      YEAR = {2002},
     PAGES = {xii+300},
      ISBN = {0-521-81669-6},
}

\bib{ptt2011}{article}{
   author={Paulsen, Vern I.},
   author={Todorov, Ivan G.},
   author={Tomforde, Mark},
   title={Operator system structures on ordered spaces},
   journal={Proc. Lond. Math. Soc. (3)},
   volume={102},
   date={2011},
   number={1},
   pages={25--49},
   issn={0024-6115},
   review={\MR{2747723}},
   doi={10.1112/plms/pdq011},
}

\bib{paulsen-tomforde}{article}{
   author={Paulsen, Vern I.},
   author={Tomforde, Mark},
   title={Vector spaces with an order unit},
   journal={Indiana Univ. Math. J.},
   volume={58},
   date={2009},
   number={3},
   pages={1319--1359},
   issn={0022-2518},
   review={\MR{2542089}},
   doi={10.1512/iumj.2009.58.3518},
}

\bib{PisierIntro}{book}{
   author={Pisier, Gilles},
   title={Introduction to operator space theory},
   series={London Mathematical Society Lecture Note Series},
   volume={294},
   publisher={Cambridge University Press, Cambridge},
   date={2003},
   pages={viii+478},
   isbn={0-521-81165-1},
   review={\MR{2006539}},
   doi={10.1017/CBO9781107360235},
}

\bib{pisier-ck}{book}{
   author={Pisier, Gilles},
   title={Tensor products of $C^*$-algebras and operator spaces---the
   Connes-Kirchberg problem},
   series={London Mathematical Society Student Texts},
   volume={96},
   publisher={Cambridge University Press, Cambridge},
   date={2020},
   pages={x+484},
   isbn={978-1-108-74911-4},
   isbn={978-1-108-47901-1},
   review={\MR{4283471}},
   doi={10.1017/9781108782081},
}

\bib{RobertsonSmith}{article}{
   author={Robertson, A. G.},
   author={Smith, R. R.},
   title={Liftings and extensions of maps on $C^*$-algebras},
   journal={J. Operator Theory},
   volume={21},
   date={1989},
   number={1},
   pages={117--131},
   issn={0379-4024},
   review={\MR{1002124}},
}

\bib{Ruan1978}{thesis}{
    author={Ruan, Zhong-Jin},
  title={On matricially normed spaces associated with operator algebras},
  year={1987},
  school={University of California, Los Angeles}
}

\bib{sinclair-cp}{article}{
   author={Sinclair, Thomas},
   title={CP-stability and the local lifting property},
   journal={New York J. Math.},
   volume={23},
   date={2017},
   pages={739--747},
   review={\MR{3665586}},
}

\bib{takesaki-i}{book}{
   author={Takesaki, M.},
   title={Theory of operator algebras. I},
   series={Encyclopaedia of Mathematical Sciences},
   volume={124},
   note={Reprint of the first (1979) edition;
   Operator Algebras and Non-commutative Geometry, 5},
   publisher={Springer-Verlag, Berlin},
   date={2002},
   pages={xx+415},
   isbn={3-540-42248-X},
   review={\MR{1873025}},
}

\bib{webster-winkler}{article}{
   author={Webster, Corran},
   author={Winkler, Soren},
   title={The Krein-Milman theorem in operator convexity},
   journal={Trans. Amer. Math. Soc.},
   volume={351},
   date={1999},
   number={1},
   pages={307--322},
   issn={0002-9947},
   review={\MR{1615970}},
   doi={10.1090/S0002-9947-99-02364-8},
}

\bib{Xhabli-thesis}{book}{
   author={Xhabli, Blerina},
   title={Universal operator system structures on ordered spaces and their
   applications},
   note={Thesis (Ph.D.)--University of Houston},
   publisher={ProQuest LLC, Ann Arbor, MI},
   date={2009},
   pages={122},
   isbn={978-1109-70014-5},
   review={\MR{2736708}},
}

\bib{Xhabli-jfa}{article}{
   author={Xhabli, Blerina},
   title={The super operator system structures and their applications in
   quantum entanglement theory},
   journal={J. Funct. Anal.},
   volume={262},
   date={2012},
   number={4},
   pages={1466--1497},
   issn={0022-1236},
   review={\MR{2873847}},
   doi={10.1016/j.jfa.2011.11.009},
}

\end{biblist}
\end{bibdiv}

\end{document}